\definecolor{cadmiumred}{rgb}{0.89, 0.0, 0.13}
\newcommand{\SL}{\text{SL}}
\newcommand{\Mp}{\text{Mp}}
 \newcommand{\lev}{\text{lev}}
\newcommand{\uL}{\underline{L}} \newcommand{\rk}{\text{rk}(\uL)}
\newcommand{\iso}{\text{Iso}(D_{\underline{L}})}
\newcommand{\ord}{\text{ord}}
\newcommand{\Aut}{\text{Aut}}
\newcommand{\tT}{\tilde{T}}
\newcommand{\tS}{\tilde{S}}
\newcommand{\e}{\mathfrak{e}}
\newcommand{\CL}{\mathbb{C}[L^\#/L]}
\newcommand{\Av}{\text{Av}}
\theoremstyle{plain}
\newtheorem{theorem}{Theorem}
\newtheorem{lemma}[theorem]{Lemma}
\newtheorem{proposition}[theorem]{Proposition}
\newtheorem{corollary}[theorem]{Corollary}
\theoremstyle{definition}
\newtheorem{example}[theorem]{Example}
\theoremstyle{remark}
\newtheorem{remark}[theorem]{Remark}
\begin{document}

\tikzset{->-/.style={decoration={
  markings,
  mark=at position .5 with {\arrow{>}}},postaction={decorate}}}
\tikzset{-->/.style={decoration={
  markings,
  mark=at position .75 with {\arrow{>}}},postaction={decorate}}}
  \tikzset{<--/.style={decoration={
  markings,
  mark=at position .75 with {\arrow{<}}},postaction={decorate}}}

\title[Jacobi--Poincar\'e and Eisenstein series]{Poincar\'e and Eisenstein series \\ for Jacobi forms of lattice index}
\author{Andreea Mocanu}
\address{The University of Nottingham}
\email{andreea.mocanu1@nottingham.ac.uk}

\begin{abstract}
Poincar\'e and Eisenstein series are building blocks for every type of modular forms. We define Poincar\'e series for Jacobi forms of lattice index and state some of their basic properties. We compute the Fourier expansions of Poincar\'e and Eisenstein series and  give an explicit formula for the Fourier coefficients of the trivial Eisenstein series. For even weight and fixed index, finite linear combinations of Fourier coefficients of non-trivial Eisenstein series are equal to finite linear combinations of Fourier coefficients of the trivial one.
\end{abstract}

\maketitle

\tableofcontents

\section{Introduction}

Jacobi forms arise naturally in number theory in several ways: theta functions arise as functions of lattices (see \cite{J}) and Siegel modular forms give rise to Jacobi forms through their Fourier--Jacobi expansion (see \cite{PS}), for example. The arithmetic theory of Jacobi forms of scalar index was developed in the 1980's in Eichler and Zagier's monograph, \cite{EZ}. Several generalizations of this type of Jacobi forms have been studied in detail since then, such as Siegel--Jacobi forms (see \cite{Z}), Jacobi forms of lattice index (see \cite{G}) and Jacobi forms over number fields (see \cite{Bo}). The popularity of Jacobi forms has increased in recent years due to their connection to string theory. They play a part in the Mirror Symmetry conjecture for K$3$ surfaces (see \cite{GN}), they determine Lorentzian Kac--Moody Lie (super) algebras of Borcherds type (see \cite{G2}) and a certain type of Jacobi forms can be the elliptic genus of Calabi--Yau manifolds (see \cite{G1}). In this paper, we work with Jacobi forms of lattice index (also referred to as ``Jacobi forms in many variables'' in the literature), which appear for example in the theory of reflective modular forms (see \cite{G3}) and vertex operator algebras (see \cite{KM}). 


Eisenstein and Poincar\'e series are the most simple examples of modular forms. They are obtained by taking the average of a function over a group (modulo a parabolic subgroup). This construction makes them automatically invariant under the group action. In the context of elliptic 
 modular forms, they satisfy the important property of reproducing Fourier coefficients of cusp forms under a suitably defined scalar product. Furthermore, Poincar\'e series generate the space of cusp forms and they can be used for instance to construct lifting maps between different types of modular forms. It is well-known that Eisenstein series are orthogonal to cusp forms in the case of elliptic modular forms. Let $M_k$ denote the space of modular forms of weight $k$ for the modular group $\SL_2(\mathbb{Z})$, let $S_k$ be its subspace of cusp forms and let $M_k^{Eis}$ be the spanning set of the Eisenstein series of weight $k$; we obtain the following decomposition:
\begin{equation*}
M_k=S_k\oplus M_k^{Eis}.
\end{equation*}
Thus, Eisenstein and Poincar\'e series describe a fixed space of modular forms completely and it is important to have explicit formulas for their Fourier expansions. 


Similar constructions hold for other types of modular forms and the purpose of this paper is to generalize these results for Jacobi forms of lattice index. To the best of the author's knowledge, Poincar\'e series have not been defined in the literature in this context. We show that they are cusp forms and that they reproduce the Fourier coefficients of other cusp forms under the Petersson scalar product; we compute their Fourier expansion, expressing their Fourier coefficients as infinite series (see Theorem \ref{T:Poincare}). The definition of Jacobi--Eisenstein series was given for instance in \cite{A}, where some of their properties were studied (such as dimension formulas for their spanning set and the fact that they are Hecke eigenforms). We show that they are orthogonal to cusp forms and compute their Fourier expansion (see Theorem \ref{T:Eisenstein}). We give an explicit formula for the Fourier coefficients of the trivial Eisenstein series (see Theorem \ref{T:E0}) and obtain a linear relation between trivial and non-trivial Eisenstein series (see Proposition \ref{P:avxiso}).

The following section contains the notation and theory that are necessary in order to make the main results precise. Sections \ref{S:P} and \ref{S:E} are dedicated to the proofs of Theorems \ref{T:Poincare} and \ref{T:Eisenstein}, respectively. In the proofs of the two main theorems, we use the approach B{\"o}cherer and Kohnen employed in their work on Siegel modular forms \cite{BK}. 
In Section \ref{S:E0}, we give an explicit formula for the Fourier coefficients of the trivial Eisenstein series. This formula involves classical number theoretical objects such as Bernoulli numbers and values of Dirichlet $L$-functions at negative integers. 
We use results of Bruinier and Kuss from \cite{BK} on $L$-series arising from representation numbers of quadratic forms. Finally, in Section \ref{S:Er} we obtain a linear relation between trivial and non-trivial Eisenstein series in the case of even weight. We use the existence of an isomorphism between spaces of Jacobi forms and spaces of vector-valued modular forms and a linear operator which was defined by Williams in \cite{Wi} for vector-valued modular forms. The proofs in this section rely heavily on the connection between the Weil and the Schr\"odinger representations.

\section{Notation and elementary results}\label{S:ND}

Let $\mathbb{Q}_{\leq0}$ denote the set of non-positive rational numbers and let $\mathbb{R}_{\geq0}$ denote the set of non-negative real numbers. Denote the finite group of residue classes modulo $m$ by $\mathbb{Z}_m$. Always consider the principal branch of the complex square root, i.e. having argument in $(-\pi/2,\pi/2]$, unless stated otherwise. For an integer $n$, let $e_n(x)$ denote the function $e^{2\pi ix/n}$ and write $e(x)$ for $e_1(x)$. For a rational number $t$, let $\lceil t\rceil$ denote the smallest integer that is greater than or equal to $t$ and let $\lfloor t\rfloor$ denote the greatest integer that is smaller than or equal to $t$. For each prime $p$, let $\ord_p$ be the $p$-adic valuation on $\mathbb{Q}$. Let $\mathfrak{H}$ denote the upper half-plane $\{\tau\in\mathbb{C}:\Im(\tau)>0\}$. Let $\left(\frac{\cdot}{\cdot}\right)$ denote the Kronecker symbol and, for every discriminant $\mathfrak{f}$, write $\chi_{\mathfrak{f}}(\cdot)$ for the quadratic Dirichlet character modulo $|\mathfrak{f}|$ given by $\chi_{\mathfrak{f}}(a)=\left(\frac{\mathfrak{f}}{a}\right)$. Let $\mu$ denote the M\"obius function, let $\zeta$ denote the Riemann zeta function, let $\sigma_t$ denote the $t$-th divisor sum and, for every Dirichlet character $\chi$, let $\sigma_t^{\chi}$ denote the twisted divisor sum $\sigma_t^{\chi}(n)=\sum_{d\mid n}\chi(d)d^t$. Let $B_m$ denote the $m$-th Bernoulli number and let $B_n(\cdot)$ denote the $n$-th Bernoulli polynomial, which is defined as
\begin{equation*}
B_n(x):=\sum_{j=0}^n\binom{n}{j}B_{n-j}x^j.
\end{equation*}

Recall that the $J$-Bessel function of index $\alpha>0$ is defined by the following series expansion around $x=0$:
\begin{equation}\label{eq:Jbessel}
 J_{\alpha}(x):=\sum_{n=0}^{\infty}\frac{(-1)^n}{n!\Gamma(n+\alpha+1)}\left(\frac{x}{2}\right)^{2n+\alpha}.
 \end{equation}

\subsection{Lattices}\label{notation}

Let $L$ and $N$ be $\mathbb{Z}$-modules, with $L$ free of finite rank equal to $m$. A map $\beta:L\times L\to N$ is called a symmetric $\mathbb{Z}$-bilinear form if
\begin{equation*}
\begin{split} 
\beta(x,y)&=\beta(y,x),\\
\beta( x,\lambda y+\mu z)&=\lambda\beta(x,y)+\mu\beta(x,z),
\end{split}
\end{equation*}
for all $x,y,z$ in $L$ and all $\lambda,\mu$ in $ \mathbb{Z}$. If $N=\mathbb{Z}$, then $\beta$ is said to be integral. The bilinear form $\beta$ is called non-degenerate if the map $x\mapsto\beta(\cdot,x)$ from $L$ to Hom$(L,N)$ is injective. Let $\{e_1,\dots,e_m\}$ be a $\mathbb{Z}$-basis of $L$. The matrix $G=(\beta(e_i,e_j))_{i,j}$ in $ M_m(N)$ is called the Gram matrix of $\beta$ with respect to $\{e_1,\dots,e_m\}$. Identify every element in the lattice with its coefficient vector and write
\begin{equation*}
\beta(x,y)=x^tGy,
\end{equation*}
where $A^t$ denotes the transpose of a matrix $A$. It is therefore possible to extend the domain of definition of $\beta$ to $L\otimes_{\mathbb{Z}}\mathbb{Q}$, $L\otimes_{\mathbb{Z}}\mathbb{R}$ and $L\otimes_{\mathbb{Z}}\mathbb{C}$ in a natural way via the above matrix formula. 

A {\em lattice} $\uL$ over $\mathbb{Z}$ is a pair $(L,\beta)$ such that $L$ is a free $\mathbb{Z}$-module of finite rank and $\beta:L\times L\to N$ is a symmetric, non-degenerate $\mathbb{Z}$-bilinear form. The lattice $\uL$ is said to be integral if the associated bilinear form is integral. The rank of $\uL$, denoted by $\rk$, is defined as the rank of $L$ as a $\mathbb{Z}$-module. An integral lattice $\uL=(L,\beta)$ is called positive-definite if $\beta(x,x)>0$ for all $x$ in $ L$ such that $x\neq0$. It is called even if $\beta(x,x)$ is even for all $x$ in $ L$, otherwise it is called odd. By abuse of notation, denote the quadratic form associate to $\uL$ by $\beta(x)$, i.e. $\beta(x)=\frac{1}{2}\beta(x,x)$ for all $x$ in $ L$. Note that these definitions are still valid if $\mathbb{Z}$ is replaced with any commutative ring with identity (such as the ring of integers of a number field or the $p$-adic integers, for example). 

Let $\uL=(L,\beta)$ be an integral lattice and define the following set:
\begin{equation}\nonumber 
L^\#:=\{y\in L\otimes_{\mathbb{Z}}\mathbb{Q}:\beta(y,x)\in\mathbb{Z}\text{ for all }x\text{ in } L\}.
\end{equation} 
The {\em dual lattice} of $\uL$ is defined as the pair $\underline{L}^\#=(L^\#,\beta)$. The group $L$ is clearly a subgroup of $L^\#$ and $L^\#/L$ is an abelian group of finite order. The order of this group is called the determinant of $\uL$, denoted by $\det(\uL):=|L^\#/L|=|\det(G)|$.  If $\uL$ is even, then the reduction of $\beta$ modulo $\mathbb{Z}$ induces a bilinear form on $L^\#/L$. In this case, define the discriminant form associated with $\uL$ as the pair
\begin{equation}\nonumber
D_{\uL}:=\left(L^\#/L,x+L\mapsto\beta(x)+\mathbb{Z}\right) 
\end{equation}
and the isotropy set of $D_{\uL}$ as the set
\begin{equation}\nonumber 
\iso:=\{x\in L^\#/L:\beta(x)\in\mathbb{Z}\}. 
\end{equation} 
For every $x\in D_{\uL}$, let $N_x$ denote its order in $L^\#/L$. The level of $\uL$, denoted by $\lev(\uL)$, is the smallest positive integer that satisfies \lev$(\uL)\cdot\beta(x)\in\mathbb{Z}$ for all $x$ in $ L^\#$. Set
\begin{equation}\label{eq:Delta}
\Delta(\uL):= 
\begin{cases}
(-1)^{\frac{\rk}{2}}\text{det}(\uL), &\text{if } \rk\equiv0 \bmod2 \text{ and }\\
(-1)^{\lfloor\frac{\rk}{2}\rfloor}2\text{det}(\uL), &\text{if } \rk\equiv1 \bmod2. 
\end{cases} 
\end{equation} 
For $a$ in $\mathbb{\mathbb{Z}}$ and $D$ in $\mathbb{Q}$ such that $D\cdot\Delta(\uL)\in\mathbb{Z}$, define
\begin{equation}\label{eq:chiL}
\begin{split}
\chi_{\uL}(D,a):=\left(\frac{D\Delta(\uL)}{a}\right).
\end{split}
\end{equation} 
It is known that $\Delta(\uL)$ is a discriminant, i.e. it is congruent to $0$ or $1$ modulo $4$ (see Lemma $14.3.20$ and Remark $14.3.23$ in $\S14.3$ of \cite{CS}). 

\subsection{Jacobi forms}

In this subsection we give a brief overview of the theory of Jacobi forms of lattice index, following the exposition in \cite{A}. For details and proofs, the reader can consult \cite{A}.

Let $\uL=(L,\beta)$ be an even lattice. The integral Heisenberg group associated with $\underline{L}$ is defined as
\begin{equation*}
H_{\underline{L}}(\mathbb{Z}):=\{(x,y):x,y\in L\}, 
\end{equation*} 
with composition law given by component wise addition, i.e.
\begin{equation*}
(x_1,y_1)(x_2,y_2):=\left(x_1+x_2,y_1+y_2\right).
\end{equation*} 
The group $\SL_2(\mathbb{Z})$ of $2\times2$ integral matrices with determinant equal to $1$ acts on $H_{\underline{L}}(\mathbb{Z})$ from the right via:
\begin{equation*}
\left(\left(x,y\right),A\right)\mapsto(x,y)^A:=\left(ax+cy,bx+dy\right),
\end{equation*} 
where $A=\left(\begin{smallmatrix}
a&b\\c&d
\end{smallmatrix}\right)\in\SL_2(\mathbb{Z})$. The {\em Jacobi group associated with $\uL$} is the semi-direct product of $\SL_2(\mathbb{Z})$ and $H_{\underline{L}}(\mathbb{Z})$,
\begin{equation*}
J_{\underline{L}}(\mathbb{Z}):=\text{SL}_2(\mathbb{Z})\ltimes
H_{\underline{L}}(\mathbb{Z}),
\end{equation*}
with the following composition law:
\begin{equation*}
 (A,h)(A',h')=(AA',h^{A'}h'),
\end{equation*}
for every $A,A'$ in $\SL_2(\mathbb{Z})$ and every $h,h'$ in $ H_{\uL}(\mathbb{Z})$. Note that it is also possible to define a rational Jacobi group $J_{\uL}(\mathbb{Q})$ and a real Jacobi group $J_{\uL}(\mathbb{R})$ in an analogous way.

It was shown in $\S2.2$ of \cite{A} that the Jacobi group acts on the left on the space $\mathfrak{H}\times(L\otimes_{\mathbb{Z}}\mathbb{C})$. If $A=\left(\begin{smallmatrix}
a&b\\c&d
\end{smallmatrix}\right)\in\SL_2(\mathbb{Z})$ and $h=(x,y)\in H_{\uL}(\mathbb{Z})$, then the action of $(A,h)$ on $\mathfrak{H}\times(L\otimes_{\mathbb{Z}}\mathbb{C})$ is defined as
\begin{equation*}
\left((A,h),(\tau,z)\right)\mapsto(A,h)(\tau,z):=\left(A\tau,\frac{z+x\tau+y}{c\tau+d}\right),
\end{equation*}
for each pair $(\tau,z)$ in $\mathfrak{H}\times(L\otimes_{\mathbb{Z}}\mathbb{C})$. The Jacobi group also acts on the right on the space of holomorphic functions defined on $\mathfrak{H}\times(L\otimes_{\mathbb{Z}}\mathbb{C})$ and taking values in $\mathbb{C}$. For each such function $\phi$ and for every $A=\left( \begin{smallmatrix}
a&b\\c&d \end{smallmatrix} \right)$ in $\text{SL}_2(\mathbb{Z})$, define the following action:
\begin{equation}\nonumber
\phi|_{k,\underline{L}}A(\tau,z):=\phi\left(A\tau,\frac{z}{c\tau+d}\right)(c\tau+d)^{-k}e\left(\frac{-c\beta(z)}{c\tau+d}\right)
\end{equation}
and, for every $h=(x,y)$ in $ H_{\underline{L}}(\mathbb{Z})$, define the following action:
\begin{equation}\nonumber
\phi|_{k,\underline{L}}h(\tau,z):=\phi(\tau,z+x\tau+y)e\left(\tau\beta(x)+\beta(x,z)\right).
\end{equation}
The action of $J_{\uL}(\mathbb{Z})$ is defined as
\begin{equation}\nonumber
\left(\phi,(A,h)\right)\mapsto\phi|_{k,\underline{L}}(A,h):=(\phi|_{k,\underline{L}}A)|_{k,\underline{L}}h.
\end{equation} 
We leave it to the reader to verify that this is indeed a group action.

Let $k$ be a positive integer and let $\uL=(L,\beta)$ be a positive-definite, even lattice. A {\em Jacobi form of weight $k$ and index $\underline{L}$} is a holomorphic function $\phi:\mathfrak{H}\times(L\otimes_{\mathbb{Z}}\mathbb{C})\to\mathbb{C}$ with the following properties: 
\begin{itemize}
\item For all $\gamma$ in $ J_{\uL}(\mathbb{Z})$, the following holds: 
\begin{equation*}
\phi|_{k,\underline{L}}\gamma(\tau,z)=\phi(\tau,z).
\end{equation*}
\item The function $\phi$ has a Fourier expansion of the form
\begin{equation}\label{eq:Fclassical}
\phi(\tau,z)=\sum_{\substack{n\in\mathbb{Z},r\in
L^\#\\n\geq\beta(r)}}c(n,r)e\left(n\tau+\beta(r,z)\right). 
\end{equation}
\end{itemize}
For fixed weight and index, denote the $\mathbb{C}$-vector space consisting of all such functions by $J_{k,\underline{L}}$. For example, if we take $m$ in $\mathbb{N}$ and consider the lattice $\underline{L}=(\mathbb{Z},(x,y)\mapsto2mxy)$, then the space $J_{k,\underline{L}}$ is the space $J_{k,m}$ of Jacobi forms of weight $k$ and index $m$ defined in \cite{EZ}. Similarly, if we take $G$ to be a $g\times g$ positive-definite symmetric integral matrix with even diagonal elements and we consider the lattice $\underline{L}=\left(\mathbb{Z}^{(g,1)},(x,y)\mapsto x^tGy\right)$, then $J_{k,\underline{L}}$ is the space $J_{k,\frac{1}{2}G}$ of Jacobi forms of weight $k$ and index $\frac{1}{2}G$ defined in \cite{BK}.

The reader will notice that the Fourier expansions in our main theorems do not look like the one in \eqref{eq:Fclassical} . This is due to the following very useful fact, which was proven in $\S2.4$ of \cite{A}:

\begin{proposition}\label{P:JFcoeffs}
If $\phi$ in $ J_{k,\uL}$ has a Fourier expansion of the form
\begin{equation*}
\phi(\tau,z)=\sum_{\substack{n\in\mathbb{Z},r\in
L^\#\\n\geq\beta(r)}}c(n,r)e\left(n\tau+\beta(r,z)\right),
\end{equation*}
then the Fourier coefficients $c(n,r)$ depend only on $n-\beta(r)$ and on $r\bmod L$. More precisely, we have $c(n,r)=c(n',r')$ whenever $r\equiv r'\bmod L$ and $n-\beta(r)=n'-\beta(r')$. 
\end{proposition}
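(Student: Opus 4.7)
The proof should rely on exploiting the invariance of $\phi$ under the Heisenberg part of the Jacobi group, since $\SL_2(\mathbb{Z})$-invariance constrains the coefficients only through $\tau$, whereas the Heisenberg transformations mix $n$ and $r$. Concretely, I would set $A = I$ in the slash action and apply $\phi \mid_{k,\uL}(I,h) = \phi$ for an arbitrary $h = (x,y) \in H_{\uL}(\mathbb{Z})$, which gives the functional equation
\begin{equation*}
\phi(\tau, z + x\tau + y)\, e(\tau\beta(x) + \beta(x,z)) = \phi(\tau,z).
\end{equation*}

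Next, I would substitute the Fourier expansion \eqref{eq:Fclassical} into the left-hand side. Using $\beta(r, z + x\tau + y) = \beta(r,z) + \tau\beta(r,x) + \beta(r,y)$, the left-hand side becomes
\begin{equation*}
\sum_{n,r} c(n,r)\, e\bigl((n + \beta(r,x) + \beta(x))\tau + \beta(r+x, z) + \beta(r,y)\bigr).
\end{equation*}
I would then reindex via $r' := r + x$, so $r = r' - x$, and set $n' := n + \beta(r',x) - \beta(x)$, using $\beta(r'-x,x) + \beta(x) = \beta(r',x) - \beta(x)$ (which comes from $\beta(x,x) = 2\beta(x)$). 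Under this change of summation variable, the coefficient of $e(n'\tau + \beta(r',z))$ on the left becomes
\begin{equation*}
c(n' - \beta(r',x) + \beta(x),\, r' - x)\, e(\beta(r'-x, y)).
\end{equation*}

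Comparing with the Fourier expansion on the right-hand side by uniqueness of Fourier coefficients yields, for every $(x,y) \in L \times L$,
\begin{equation*}
c(n' - \beta(r',x) + \beta(x),\, r' - x)\, e(\beta(r'-x,y)) = c(n', r').
\end{equation*}
The $y$-dependence is automatically trivial because $r' - x \in L^{\#}$ and $y \in L$, so $\beta(r'-x, y) \in \mathbb{Z}$. Setting $n := n' - \beta(r',x) + \beta(x)$ and $r := r' - x$, a direct expansion of $\beta(r'-x) = \beta(r') - \beta(r',x) + \beta(x)$ gives $n - \beta(r) = n' - \beta(r')$, so the identity reads $c(n,r) = c(n', r')$ whenever $r \equiv r' \bmod L$ and $n - \beta(r) = n' - \beta(r')$, which is exactly the claim.

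The only nontrivial step is the bookkeeping of the reindexing, namely verifying the identity $n' - \beta(r') = n - \beta(r)$ after the shift; everything else is a direct manipulation. I do not anticipate any analytic subtlety, as the uniqueness of Fourier expansions of holomorphic functions on $\mathfrak{H} \times (L \otimes_\mathbb{Z} \mathbb{C})$ with the stated support condition is standard.
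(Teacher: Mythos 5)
Your argument is correct and is the standard one: the paper does not prove this proposition itself but defers to $\S 2.4$ of the cited thesis of Ajouz, where exactly this computation (Heisenberg invariance $\phi|_{k,\uL}(I,(x,y))=\phi$, substitution of the Fourier expansion, reindexing $r'=r+x$, and comparison of coefficients, with the $y$-dependence killed by $\beta(r'-x,y)\in\mathbb{Z}$) is carried out. All of your bookkeeping identities, in particular $n-\beta(r)=n'-\beta(r')$ after the shift, check out.
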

Define the following set, called the support of $\uL$:
\begin{equation}\nonumber
\text{supp}(\uL):=\{(D,r):r\in L^\#, D\in\mathbb{Q}_{\leq0}, D\equiv\beta(r)\bmod\mathbb{Z}\}.
\end{equation}
For every $\phi$ in $ J_{k,\uL}$ with Fourier expansion \eqref{eq:Fclassical} and for each pair $(D,r)$ in $\text{supp}(\uL)$, let $C(D,r):=c\left(\beta(r)-D,r\right)$. Proposition \ref{P:JFcoeffs} implies that every $\phi$ in $ J_{k,\uL}$ has a Fourier expansion of the form
\begin{equation}\label{eq:jacobifourier}
\phi(\tau,z)=\sum_{(D,r)\in\text{supp}(\uL)}C(D,r)e\left((\beta(r)-D)\tau+\beta(r,z)\right).
\end{equation}
We will often use the interplay between these two Fourier expansions. The latter version is used to define Jacobi {\em cusp forms}: a Jacobi form $\phi$ in $ J_{k,\uL}$ is called a cusp form if $C(0,r)=0$ for all $r$ such that
$\beta(r)\in\mathbb{Z}$. The subspace of cusp forms in $J_{k,\uL}$ is denoted by $S_{k,\underline{L}}$. For each $\phi$ in $ J_{k,\uL}$, define its singular term as
 \begin{equation*}
  C_0(\phi)(\tau,z):=\sum_{\substack{r\in L^\#\\ \beta(r)\in\mathbb{Z}}}C(0,r)e\left(\tau\beta(r)+\beta(r,z)\right).
 \end{equation*}

Set 
\begin{equation}\label{eq:Jinf}
J_{\uL}(\mathbb{Z})_{\infty}:=\left\{\left(
\begin{pmatrix} 1&n\\0&1 \end{pmatrix},(0,\mu)\right):n\in\mathbb{Z},\mu\in L\right\}.
\end{equation}
This is the stabilizer of the constant function equal to one, i.e. the set
\begin{equation*}
\{\gamma\in J_{\uL}(\mathbb{Z}):1|_{k,\uL}\gamma=1\}.
\end{equation*}

For $\tau$ in $\mathfrak{H}$ and $z$ in $ L\otimes_{\mathbb{Z}}\mathbb{C}$, let $\tau=u+iv$ and $z=x+iy$. In $\S3.2$ of \cite{A}, the following $J_{\uL}(\mathbb{Z})$-invariant volume element on $\mathfrak{H}\times(L\otimes_{\mathbb{Z}}\mathbb{C})$ is defined:
\begin{equation}\nonumber 
dV_{\uL}(\tau,z):=v^{-\rk-2}dudvdxdy.
\end{equation} 
For two functions $\phi$ and $\psi$ that are invariant under the $|_{k,\uL}$ action of $J_{\uL}(\mathbb{Z})$, define
\begin{equation}\label{eq:omega}
\omega_{\phi,\psi}(\tau,z):=\phi(\tau,z)\overline{\psi(\tau,z)}v^{k}e^{-4\pi\beta(y)v^{-1}}.
\end{equation} 
It is easy to show that the function $\omega_{\phi,\psi}$ is also $J_{\uL}(\mathbb{Z})$-invariant. Given a fundamental domain $\mathfrak{F}$ for the action of $\SL_2(\mathbb{Z})$ on $\mathfrak{H}$ and a fundamental parallelotope  $\mathscr{M}_{\uL}$ for $(L\otimes\mathbb{C})/(\tau L+L)$, choose as a fundamental domain for the action of $J_{\uL}(\mathbb{Z})$ on $\mathfrak{H}\times(L\otimes_{\mathbb{Z}}\mathbb{C})$ the set
\begin{equation}\nonumber
\mathfrak{F}_{J_{\uL}(\mathbb{Z})}:=\{(\tau,z)\in\mathfrak{H}\times(L\otimes_{\mathbb{Z}}\mathbb{C}):\tau\in\mathfrak{F}, z\in\mathscr{M}_{\uL}\}/\{\text{id}, \iota\},
\end{equation}
where $\iota$ is the reflection map $(\tau,z)\mapsto (\tau,-z)$. If $\phi$ and $\psi$ are elements of $J_{k,\uL}$ and at least one of them is a cusp form, define their {\em Petersson scalar product} as
\begin{equation}\label{eq:JPetdef}
\langle\phi,\psi\rangle:=\int_{\mathfrak{F}_{J_{\uL}(\mathbb{Z})}}\omega_{\phi,\psi}(\tau,z)dV_{\uL}(\tau,z).
\end{equation}
The integral in \eqref{eq:JPetdef} is absolutely convergent and the scalar product it defines is independent of the choice of fundamental domain.

\subsection{Vector-valued modular forms}\label{Ss:VV}

We briefly discuss the connection between Jacobi forms and vector-valued modular forms. The main reference for this subsection is \cite{Bo}.

The metaplectic group, denoted by $\Mp_2(\mathbb{Z})$, consist of elements of the form $\tilde{A}=(A,w(\tau))$, with $A=\left(\begin{smallmatrix}a&b\\c&d\end{smallmatrix}\right)$ in $\SL_2(\mathbb{Z})$ and $w(\tau)$ is a holomorphic function defined on $\mathfrak{H}$, such that $w(\tau)^2=c\tau+d$. The group law on $\Mp_2(\mathbb{Z})$ is
\begin{equation*}
(A,w(\tau))(B,v(\tau))=(AB,w(B\tau)v(\tau)).
\end{equation*}
Let $V$ be a finite dimensional vector space over $\mathbb{C}$, let $k\in\frac{1}{2}\mathbb{Z}$ and let $\rho:\Mp_2(\mathbb{Z})\to\Aut(V)$ be a finite-dimensional representation of $\Mp_2(\mathbb{Z})$, whose kernel has finite index in $\Mp_2(\mathbb{Z})$. For any function $F:\mathfrak{H}\to V$, define the Petersson slash operator
\begin{equation*}
F|_k\tilde{A}(\tau):=w(\tau)^{-2k}F(A\tau).
\end{equation*}
A holomorphic function $F:\mathfrak{H}\to V$ (write $F=(F_1,\dots,F_{\text{dim}(V)})$ for a fixed basis of $V$) that satisfies
\begin{equation*}
F|_{k}\tilde{A}(\tau)=\rho(\tilde{A})F(\tau)
\end{equation*}
for all $A\in\Mp_2(\mathbb{Z})$ and whose individual components $F_j$ ($1\leq j\leq\text{dim}(V)$) extend to holomorphic functions from $\mathfrak{H}$ to $\mathbb{C}$ is called a {\em vector-valued modular form of weight $k$ for $\rho$}. For fixed weight, denote the $\mathbb{C}$-vector space of all such functions by $M_{k}(\rho)$.

For each $x$ in $ L^\#/L$, define the Jacobi theta series
\begin{equation}\label{eq:thetadef}
\vartheta_{\uL,x}(\tau,z):=\sum_{\substack{r\in L^\#\\r\equiv x\bmod L}}e\left(\tau\beta(r)+\beta(r,z)\right).
\end{equation} 
For each $\phi$ in $ J_{k,\uL}$ with Fourier expansion as in \eqref{eq:jacobifourier}, define the following function on the upper half-plane:
\begin{equation*}
h_{\phi,x}(\tau)=\sum_{\substack{D\in\mathbb{Q}\\(D,x)\in\mathrm{supp}(\uL)}}C(D,x)q^{-D}.
\end{equation*} 
Ajouz proved in $\S2.4$ of \cite{A} that every Jacobi form has a \textit{theta expansion}:

\begin{proposition}
Each Jacobi form $\phi$ in $J_{k,\uL}$ can be written as
\begin{equation*}
\phi(\tau,z)=\sum_{x\in L^\#/L}h_{\phi,x}(\tau)\vartheta_{\uL,x}(\tau,z).
\end{equation*}

\end{proposition}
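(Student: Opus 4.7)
The proof plan is to rearrange the given Fourier expansion by regrouping terms according to the coset of the summation variable $r$ in $L^\#/L$, and then recognize the resulting factors as $h_{\phi,x}$ and $\vartheta_{\uL,x}$.

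First, I would start from the Fourier expansion \eqref{eq:Fclassical}
\begin{equation*}
\phi(\tau,z)=\sum_{\substack{n\in\mathbb{Z},\,r\in L^\#\\ n\geq \beta(r)}}c(n,r)\,e\!\left(n\tau+\beta(r,z)\right),
\end{equation*}
which converges absolutely and uniformly on compact subsets of $\mathfrak{H}\times(L\otimes_{\mathbb{Z}}\mathbb{C})$ by holomorphicity of $\phi$. This absolute convergence lets me freely rearrange the sum.

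Second, I would partition the index set of $r\in L^\#$ into cosets modulo $L$, writing
\begin{equation*}
\phi(\tau,z)=\sum_{x\in L^\#/L}\,\sum_{\substack{r\in L^\#\\ r\equiv x\bmod L}}\,\sum_{n\geq\beta(r)}c(n,r)\,e\!\left(n\tau+\beta(r,z)\right),
\end{equation*}
and then change the inner summation variable from $n$ to $D:=\beta(r)-n\in\mathbb{Q}_{\leq 0}$. Since $\uL$ is even, $\beta(r)\bmod\mathbb{Z}$ depends only on $x=r+L$, so the admissible values of $D$ depend only on $x$ and the pair $(D,r)$ lies in $\mathrm{supp}(\uL)$. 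By Proposition~\ref{P:JFcoeffs}, $C(D,r)=c(\beta(r)-D,r)$ depends only on $D$ and on $x$, so I may denote this common value by $C(D,x)$.

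Third, I would pull $C(D,x)e(-D\tau)$ out of the innermost sum to obtain
\begin{equation*}
\phi(\tau,z)=\sum_{x\in L^\#/L}\!\left(\sum_{D}C(D,x)\,e(-D\tau)\right)\!\left(\sum_{\substack{r\in L^\#\\ r\equiv x\bmod L}}e\!\left(\tau\beta(r)+\beta(r,z)\right)\right),
\end{equation*}
and recognize the first parenthesized factor as $h_{\phi,x}(\tau)$ and the second as $\vartheta_{\uL,x}(\tau,z)$ by the definitions in \eqref{eq:thetadef}.

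I do not expect any deep obstacle here: the content of the proposition is essentially a bookkeeping consequence of Proposition~\ref{P:JFcoeffs}. The only genuinely nontrivial point is the justification of the rearrangement, which amounts to checking absolute convergence of the double sum over $(x,r,D)$ on compact subsets; this follows from the standard uniform convergence of Fourier expansions of holomorphic Jacobi forms on tube-like neighbourhoods and from the fact that the theta series $\vartheta_{\uL,x}$ themselves converge absolutely. Once this is verified, the identification of $h_{\phi,x}$ and $\vartheta_{\uL,x}$ is purely formal.
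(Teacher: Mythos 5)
Your proof is correct and is exactly the standard regrouping argument: the paper itself does not prove this proposition but cites Ajouz's thesis, where the proof proceeds in the same way, by sorting the Fourier expansion into cosets of $L^\#/L$, applying Proposition~\ref{P:JFcoeffs} to factor out the coefficient $C(D,x)$, and identifying the two resulting sums with $h_{\phi,x}$ and $\vartheta_{\uL,x}$. Nothing further is needed.
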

Jacobi theta series are very interesting in their own right and much can be said about them. We mention that the functions $\vartheta_{\uL,x}$ and $h_{\phi,x}$ have some modular properties with respect to $\Mp_2(\mathbb{Z})$ and we refer the reader to the PhD thesis of Boylan (see $\S3$ of \cite{Bo}), where this is discussed in detail in the more general setting of Jacobi forms over number fields. Consider the group algebra $\mathbb{C}[L^\#/L]$ of maps $L^\#/L\to\mathbb{C}$, with natural basis $\{\mathfrak{e}_x\}_{x\in L^\#/L}$. The scalar product on $\mathbb{C}[L^\#/L]$ is defined as
\begin{equation*}
\langle \sum_{x\in L^\#/L}f_x\e_x,\sum_{x\in L^\#/L}g_x\e_x\rangle=\sum_{x\in L^\#/L}f_x\overline{g_x}.
\end{equation*}

We define the {\em Weil representation associated with $\uL$} of $\Mp_2(\mathbb{Z})$ on $\Aut(\CL)$ by the following action of the generators of $\Mp_2(\mathbb{Z})$:
\begin{equation*}
\begin{split}
\rho_{\uL}(\tT)\e_x&=e(\beta(x))\e_x,\\
\rho_{\uL}(\tS)\e_x&=\frac{i^{-\frac{\rk}{2}}}{\det(\uL)^{\frac{1}{2}}}\sum_{y\in L^\#/L}e(-\beta(x,y))\e_y.
\end{split}
\end{equation*}
In general, write
\begin{equation*}
\rho_{\uL}(\tilde{A})\e_y=\sum_{x\in L^\#/L}\rho_{\uL}(\tilde{A})_{x,y}\e_x.
\end{equation*} 
It is well-known that $\rho_{\uL}$ is unitary and hence its dual representation is given by:
\begin{equation*}
\rho_{\uL}^*(\tilde{A})\e_y=\sum_{x\in L^\#/L}\overline{\rho_{\uL}(\tilde{A})_{x,y}}\e_x.
\end{equation*} 
We extend the definition of the $|_{k,\uL}$ action of $\SL_2(\mathbb{Z})$ on holomorphic functions $\phi:\mathfrak{H}\times(L\otimes\mathbb{C})\to\mathbb{C}$ to $\Mp_2(\mathbb{Z})$ in the following way: for every half-integer $k$ and for every $\tilde{A}=(A,w(\tau))$ in $\Mp_2(\mathbb{Z})$, define
\begin{equation*}
\phi|_{k,\uL}\tilde{A}(\tau,z):=\phi\left(A\tau,\frac{z}{w(\tau)^2}\right)w(\tau)^{-2k}e\left(\frac{-c\beta(z)}{w(\tau)^2}\right).
\end{equation*}
Boylan shows in $\S3.5$ of \cite{Bo} that, for every $x$ in $ L^\#/L$ and for every $\tilde{A}$ in $\Mp_2(\mathbb{Z})$, the theta series $\vartheta_{\uL,x}$ satisfies the following:
\begin{equation}\label{eq:thetamod}
\vartheta_{\uL,x}|_{\frac{\rk}{2},\uL}\tilde{A}(\tau,z)=\sum_{y\in L^\#/L}\rho_{\uL}(\tilde{A})_{x,y}\vartheta_{\uL,y}(\tau,z).
\end{equation}
The main result in $\S3.7$ of \cite{Bo} is the following theorem:

\begin{theorem}\label{T:jacobivv}
If $\uL=(L,\beta)$ is a positive-definite, even lattice, then the map
\begin{equation*}
\varphi:\phi(\tau,z)=\sum_{x\in L^\#/L}h_{\phi,x}(\tau)\vartheta_{\uL,x}(\tau,z)\mapsto\sum_{x\in L^\#/L}h_{\phi,x}(\tau)\e_x
\end{equation*}
is an isomorphism between $J_{k,\uL}$ and $M_{k-\frac{\rk}{2}}(\rho_{\uL}^*)$.
\end{theorem}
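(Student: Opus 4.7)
The foundational fact is that $\{\vartheta_{\uL,x}\}_{x\in L^\#/L}$ are linearly independent over the ring of holomorphic functions on $\mathfrak{H}$: from their definition \eqref{eq:thetadef}, the $z$-exponentials $e(\beta(r,z))$ appearing in $\vartheta_{\uL,x}$ are indexed by $r$ running through a single coset of $L^\#/L$, so distinct theta series contribute to disjoint sets of Fourier modes in $z$. Combined with the preceding proposition giving existence of a theta expansion, this shows that $\varphi$ is well-defined, $\mathbb{C}$-linear, and injective. The holomorphicity of each $h_{\phi,x}$ at $i\infty$ is immediate from the Fourier expansion \eqref{eq:jacobifourier}: the condition $D\le 0$ translates into a $q$-expansion of $h_{\phi,x}$ with non-negative exponents.

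For the modular transformation law I would split the slash operator according to weight. Since $w(\tau)^{-2k}=w(\tau)^{-2(k-\rk/2)}\cdot w(\tau)^{-\rk}$, and the factor $e(-c\beta(z)/w(\tau)^2)$ naturally sits inside the theta part, the slash operator factors as
\begin{equation*}
\phi|_{k,\uL}\tilde{A}(\tau,z)=\sum_{x\in L^\#/L}\bigl(h_{\phi,x}|_{k-\rk/2}\tilde{A}\bigr)(\tau)\cdot\bigl(\vartheta_{\uL,x}|_{\rk/2,\uL}\tilde{A}\bigr)(\tau,z).
\end{equation*}
Applying \eqref{eq:thetamod}, interchanging the order of summation, and combining $\phi|_{k,\uL}\tilde{A}=\phi$ with linear independence of theta series gives the scalar identity
\begin{equation*}
h_{\phi,y}(\tau)=\sum_{x\in L^\#/L}\rho_{\uL}(\tilde{A})_{x,y}\bigl(h_{\phi,x}|_{k-\rk/2}\tilde{A}\bigr)(\tau).
\end{equation*}
Since $\rho_{\uL}$ is unitary, $\rho_{\uL}(\tilde{A})^{-1}$ equals its conjugate transpose, which by the definition given in the excerpt is exactly the matrix of $\rho_{\uL}^*(\tilde{A})$; inverting the previous relation then yields precisely the vector-valued transformation rule $\varphi(\phi)|_{k-\rk/2}\tilde{A}=\rho_{\uL}^*(\tilde{A})\varphi(\phi)$.

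Surjectivity is obtained by reversing the construction: given $F=\sum_x h_x\e_x\in M_{k-\rk/2}(\rho_{\uL}^*)$, set $\phi(\tau,z):=\sum_x h_x(\tau)\vartheta_{\uL,x}(\tau,z)$ and run the calculation backwards to get $\phi|_{k,\uL}\tilde{A}=\phi$ for every $\tilde{A}\in\Mp_2(\mathbb{Z})$, hence for every $A\in\SL_2(\mathbb{Z})$. Invariance under the Heisenberg subgroup $H_{\uL}(\mathbb{Z})$ follows term-by-term from the $H_{\uL}(\mathbb{Z})$-invariance of each $\vartheta_{\uL,x}$, which is an elementary re-indexing of the sum defining \eqref{eq:thetadef}. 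The shape of the $q$-expansions of the $h_x$ forces the Fourier coefficients of $\phi$ to satisfy $n\ge\beta(r)$, so $\phi\in J_{k,\uL}$ and clearly $\varphi(\phi)=F$. The only non-routine step is the bookkeeping in the weight factorization and the application of unitarity to switch between $\rho_{\uL}$ and $\rho_{\uL}^*$; once this is set up, the argument reduces entirely to the theta transformation \eqref{eq:thetamod} and the linear independence of theta series.
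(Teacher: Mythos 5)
The paper does not prove this theorem: it is imported verbatim from $\S3.7$ of Boylan's thesis \cite{Bo}, so there is no internal proof to compare against. Your argument is the standard one and is essentially correct: linear independence of the $\vartheta_{\uL,x}$ (distinct cosets contribute disjoint $z$-Fourier modes), the factorization $w(\tau)^{-2k}=w(\tau)^{-2(k-\frac{\rk}{2})}w(\tau)^{-\rk}$ of the automorphy factor, the theta transformation \eqref{eq:thetamod}, unitarity of $\rho_{\uL}$, and the reverse construction for surjectivity are exactly the required ingredients, and the holomorphy at $i\infty$ in both directions is correctly matched with the condition $D\leq 0$ in $\mathrm{supp}(\uL)$. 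One bookkeeping point should be repaired: from $h_{\phi,y}=\sum_x\rho_{\uL}(\tilde{A})_{x,y}\bigl(h_{\phi,x}|_{k-\frac{\rk}{2}}\tilde{A}\bigr)$ the matrix you must invert is $\rho_{\uL}(\tilde{A})^t$, and by unitarity $\bigl(\rho_{\uL}(\tilde{A})^t\bigr)^{-1}=\overline{\rho_{\uL}(\tilde{A})}$, which is precisely the matrix of $\rho_{\uL}^*(\tilde{A})$ in the basis $\{\e_x\}$; your sentence instead identifies the matrix of $\rho_{\uL}^*(\tilde{A})$ with the conjugate transpose $\rho_{\uL}(\tilde{A})^{-1}=\overline{\rho_{\uL}(\tilde{A})}^{\,t}$, which differs from it by a transpose and would not yield the componentwise rule $h_{\phi,x}|_{k-\frac{\rk}{2}}\tilde{A}=\sum_y\overline{\rho_{\uL}(\tilde{A})_{x,y}}\,h_{\phi,y}$ unless the matrix happened to be symmetric. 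With that one line corrected, the argument goes through; the remaining steps (extension of the $|_{k,\uL}$-action from $\SL_2(\mathbb{Z})$ to $\Mp_2(\mathbb{Z})$ for integral $k$, and Heisenberg invariance of each $\vartheta_{\uL,x}$) are routine as you say.
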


This theorem together with known results on the dimension of spaces of scalar-valued modular forms for the kernel of $\rho_{\uL}$ imply that $J_{k,\uL}=\{0\}$ if $k<\rk/2$ and that the spaces $J_{k,\uL}$ are finite dimensional. It also gives a connection between Jacobi forms of odd rank lattice index and half-integral weight modular forms and between Jacobi forms of even rank lattice index and integral weight modular forms. Note that Theorem \ref{T:jacobivv} holds over arbitrary number fields, not only over $\mathbb{Q}$.

Let $H$ be the group $\mathbb{Z}^3$ with the following composition law:
\begin{equation*}
(\lambda,\mu,t)(\lambda',\mu',t')=(\lambda+\lambda',\mu+\mu',t+t'+\lambda\mu'-\mu\lambda').
\end{equation*} 
Let $\uL$ be a positive-definite, even lattice and let $x\in L^\#/L$. The {\em Schr\"odinger representation of $H$ on $\CL$ twisted at $x$} is the representation $\sigma_x:H\to\Aut(\CL)$ given by
\begin{equation}\label{eq:schrodef}
\sigma_x(\lambda,\mu,t)\e_y:=e\left(\mu\beta(x,y)+(t-\lambda\mu)\beta(x)\right)\e_{y-\lambda x}.
\end{equation}
The Schr\"odinger representation is also unitary, which can be verified on the generators $(1,0,0)$, $(0,1,0)$ and $(0,0,1)$ of $H$:  the matrix of $\sigma_x(1,0,0)$ is a permutation matrix and the matrices of $\sigma_x(0,1,0)$ and $\sigma_x(0,0,1)$ are diagonal with diagonal entries of modulus equal to one. Define the following action of $\SL_2(\mathbb{Z})$ on H:
\begin{equation*}
\left((\lambda,\mu,t),A\right)\mapsto(\lambda,\mu,t)^A:=(\lambda a+\mu c,\lambda b+\mu d,t).
\end{equation*}
For every $\tilde{A}=(A,w(\tau))$ in $\Mp_2(\mathbb{Z})$ and every $(\lambda,\mu,t)$ in $ H$, the following relation holds between the Weil and the Schr\"odinger representations:
\begin{equation*}
\rho_{\uL}(\tilde{A})^{-1}\sigma_x(\lambda,\mu,t)\rho_{\uL}(\tilde{A})=\sigma_{x}((\lambda,\mu,t)^A).
\end{equation*}
This can be easily verified on the generators $\tT$ and $\tS$ of $\Mp_2(\mathbb{Z})$ and the generators $(1,0,0)$, $(0,1,0)$ and $(0,0,1)$ of $H$. By taking complex conjugates on both sides of this equation and re-ordering, we obtain the following relation between the dual representations:
\begin{equation}\label{eq:weilschroconj}
\sigma_x^*(\lambda,\mu,t)=\rho_{\uL}^*(\tilde{A})\sigma_{x}^*((\lambda,\mu,t)^A)\rho_{\uL}^*(\tilde{A})^{-1}.
\end{equation}

\subsection{Poincar\'e and Eisenstein series}

Let $k$ be a positive integer and let $\uL=(L,\beta)$ be a positive-definite, even lattice. Let $D$ in $\mathbb{Q}_{\leq0}$ and $r$ in $ L^{\#}$ be such that $\beta(r)\equiv D\bmod\mathbb{Z}$. Define the function
\begin{equation*}
g_{\uL,D,r}(\tau,z):=e\left(\tau\left(\beta(r)-D\right)+\beta(r,z)\right)
\end{equation*} 
on the space $\mathfrak{H}\times(L\otimes_{\mathbb{Z}}\mathbb{C})$. The following holds:

\begin{lemma}\label{L:gdr}
The function $g_{\uL,D,r}$ is invariant under the $|_{k,\uL}$ action of $J_{\uL}(\mathbb{Z})_{\infty}$. Furthermore, let $-I_2$ denote the element $\left(\left(
\begin{smallmatrix}
-1&0\\0&-1
\end{smallmatrix}
\right),(0,0)\right)$ in $J_{\uL}(\mathbb{Z})$.  The following holds:
\begin{equation*}
g_{\uL,D,r}|_{k,\uL}(-I_2)(\tau,z)=(-1)^kg_{\uL,D,-r}(\tau,z).
\end{equation*}
\end{lemma}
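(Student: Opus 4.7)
The plan is to verify both assertions by unwinding the definition of the $|_{k,\uL}$ slash action on its two pieces and exploiting the hypothesis $\beta(r)\equiv D\bmod\mathbb{Z}$ together with the integrality of $\beta$ on $L^\#\times L$. First I would observe that $J_{\uL}(\mathbb{Z})_\infty$, as defined in \eqref{eq:Jinf}, is generated by the translations $T^n=\left(\left(\begin{smallmatrix}1&n\\0&1\end{smallmatrix}\right),(0,0)\right)$ with $n\in\mathbb{Z}$ and the Heisenberg elements $\left(I_2,(0,\mu)\right)$ with $\mu\in L$, so it suffices to test invariance on these two families.

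For $T^n$, the matrix part has $c=0$ and $d=1$, so the slash action reduces to the shift $\tau\mapsto\tau+n$ and produces the scalar factor $e(n(\beta(r)-D))$; this equals $1$ precisely because the hypothesis forces $\beta(r)-D\in\mathbb{Z}$. For $\left(I_2,(0,\mu)\right)$, the Heisenberg slash action has $x=0$, so the factor $e(\tau\beta(x)+\beta(x,z))$ is trivial and the action on $g_{\uL,D,r}$ reduces to the translation $z\mapsto z+\mu$, which contributes the scalar $e(\beta(r,\mu))$. This equals $1$ because $r\in L^\#$ and $\mu\in L$ force $\beta(r,\mu)\in\mathbb{Z}$. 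These two checks together yield the first claim.

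For the second claim I would plug $A=-I_2$ directly into the matrix slash. With $c=0$ and $d=-1$ one has $A\tau=\tau$, $\tfrac{z}{c\tau+d}=-z$, $(c\tau+d)^{-k}=(-1)^k$, and $e\!\left(\tfrac{-c\beta(z)}{c\tau+d}\right)=1$; since the Heisenberg component $(0,0)$ acts trivially, this gives $g_{\uL,D,r}|_{k,\uL}(-I_2)(\tau,z)=(-1)^k g_{\uL,D,r}(\tau,-z)$. The bilinearity identities $\beta(r,-z)=\beta(-r,z)$ and $\beta(-r)=\beta(r)$ then produce $g_{\uL,D,r}(\tau,-z)=g_{\uL,D,-r}(\tau,z)$, which closes the argument. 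There is no serious obstacle here: the lemma is bookkeeping that cleanly isolates the two arithmetic facts ($\beta(r)-D\in\mathbb{Z}$ and $\beta(L^\#,L)\subseteq\mathbb{Z}$) that justify using $g_{\uL,D,r}$ as the seed for a Poincar\'e-series average.
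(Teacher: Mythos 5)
Your proof is correct and follows essentially the same route as the paper: a direct evaluation of the slash action on the parabolic elements (the paper treats the general element $\left(\left(\begin{smallmatrix}1&n\\0&1\end{smallmatrix}\right),(0,\mu)\right)$ in one step rather than splitting into generators, but the computation is identical), using $\beta(r)-D\in\mathbb{Z}$ and $\beta(r,\mu)\in\mathbb{Z}$, and then the same plug-in of $-I_2$ with $(c\tau+d)^{-k}=(-1)^k$ and $g_{\uL,D,r}(\tau,-z)=g_{\uL,D,-r}(\tau,z)$.
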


\begin{proof}
Let $\gamma_{\infty}=\left(\left(
\begin{smallmatrix}
1&n\\0&1
\end{smallmatrix}
\right),(0,\mu)\right)\in J_{\uL}(\mathbb{Z})_{\infty}$. Then:
\begin{equation*}
g_{\uL,D,r}|_{k,\uL}\gamma_{\infty}(\tau,z)=g_{\uL,D,r}\left(\tau+n,z+\mu\right)=g_{\uL,D,r}(\tau,z),
\end{equation*}
since $e(n(\beta(r)-D))=e(\beta(r,\mu))=1$. For the second identity, we have:
\begin{equation*}
\begin{split}
g_{\uL,D,r}|_{k,\uL}(-I_2)(\tau,z)&=(-1)^{-k}e(\tau(\beta(-r)-D)+\beta(-r,z))=(-1)^kg_{\uL,D,-r}(\tau,z).
\end{split}
\end{equation*}
\end{proof}

Let $r$ in $L^\#$ and let $D<0$ be such that $\beta(r)\equiv D\bmod\mathbb{Z}$. Define the Jacobi--Poincar\'e series of weight $k$ and index $\uL$ associated with the pair $(D,r)$ as
\begin{equation}\label{eq:Pdef}
P_{k,\uL,D,r}(\tau,z):=\sum_{\gamma\in J_{\uL}(\mathbb{Z})_{\infty}\setminus J_{\uL}(\mathbb{Z})}g_{\uL,D,r}|_{k,\uL}\gamma(\tau,z).
\end{equation}
For $r$ in $L^\#$ such that $\beta(r)\in\mathbb{Z}$, define the Jacobi--Eisenstein series of weight $k$ and index $\uL$ associated with $r$ as
\begin{equation}\label{eq:Edef}
E_{k,\uL,r}(\tau,z):=\frac{1}{2}P_{k,\uL,0,r}(\tau,z).
\end{equation}
The factor of $\frac{1}{2}$ is introduced in the definition of Eisenstein series for normalizing purposes. As a consequence of Lemma \ref{L:gdr}, the series defined in \eqref{eq:Pdef} and \eqref{eq:Edef} are independent of the choice of coset representatives of $J_{\uL}(\mathbb{Z})_{\infty}\setminus J_{\uL}(\mathbb{Z})$. The same lemma yields
\begin{equation}\label{eq:Pr-r}
P_{k,\uL,D,-r}=(-1)^kP_{k,\uL,D,r}
\end{equation}
and
\begin{equation}\label{eq:Er-r}
E_{k,\uL,-r}=(-1)^kE_{k,\uL,r}.
\end{equation}
The Eisenstein series $E_{k,\uL,r}$ was defined in $\S3.3$ of \cite{A}. It was shown there that it can be written in terms of the theta series defined in \eqref{eq:thetadef} as
\begin{equation}\label{eq:Eistheta}
E_{k,\uL,r}(\tau,z)=\frac{1}{2}\sum_{A\in \Gamma_{\infty}\setminus\Gamma}\vartheta_{\uL,r}|_{k,\uL}A(\tau,z).
\end{equation} 
It follows that $E_{k,\uL,r}$ only depends on $r\bmod L$, i.e. we can restrict their definition to $r\in\iso$. Throughout this paper, we will be working with a fixed positive-definite, even lattice $\uL$ and fixed integer weight $k$. Therefore, to ease notation, we write $g_{D,r}(\tau,z):=g_{\uL,D,r}(\tau,z)$, $P_{D,r}(\tau,z):=P_{k,\uL,D,r}(\tau,z)$ and  $E_r(\tau,z):=E_{k,\uL,r}(\tau,z)$. We will also refer to the Eisenstein series $E_{0}$ as the trivial Eisenstein series. We proceed with the main results of this paper.

\section{Jacobi--Poincar\'e series}\label{S:P}

The following holds:

\begin{theorem}\label{T:Poincare}
Let $k$ be a positive integer and let $\uL=(L,\beta)$ be a positive-definite, even lattice of rank $\rk$. The Poincar\'e series satisfies the following:
\begin{enumerate}[(i)] 
\item If $k>\rk+2$, then $P_{D,r}$ is absolutely and uniformly convergent on compact subsets of $\mathfrak{H}\times(L\otimes_{\mathbb{Z}}\mathbb{C})$ and it is an element of $S_{k,\uL}$. Furthermore, define the explicit constant
\begin{equation}\label{eq:lambda}
\lambda_{k,\uL,D}:=2^{-2k+\frac{\rk}{2}+2}\Gamma\left(k-\frac{\rk} {2}-1\right)\det(\uL)^{-\frac{1}{2}}(-\pi D)^{-k+\frac{\rk}{2}+1}.
\end{equation}
For every cusp form $\phi$ in $S_{k,\uL}$ with Fourier expansion 
\begin{equation*}
\phi(\tau,z)=\sum_{(D',r')\in\text{supp}(\uL)}C(D',r')e\left((\beta(r')-D')\tau+\beta(r',z)\right),
\end{equation*}
the following holds:
\begin{equation*}
\langle\phi,P_{D,r}\rangle=\lambda_{k,\uL,D}C(D,r).
\end{equation*}
\item The Poincar\'e series $P_{D,r}$ has the following Fourier expansion: 
\begin{equation*}
P_{D,r}(\tau,z)=\sum_{\substack{(D',r')\in\text{supp}(\uL)\\ D'<0}}G_{D,r}(D',r')e\left((\beta(r')-D')\tau+\beta(r',z)\right),
\end{equation*} 
where 
\begin{equation}\label{eq:GPoincare}
\begin{split}
G_{D,r}(D',r'):=&\delta_{L}(D,r,D',r')+(-1)^k\delta_{L}(D,-r,D',r')+\frac{2\pi i^k}{\det(\uL)^{\frac{1}{2}}}\\
&\times\left(\frac{D'}{D}\right)^{\frac{k}{2}-\frac{\rk}{4}-\frac{1}{2}}\sum_{c\geq1} J_{k-\frac{\rk}{2}-1}\left(\frac{4\pi(DD')^{\frac{1}{2}}}{c}\right)c^{-\frac{\rk}{2}-1}\\
&\times\left(H_{\uL,c}(D,r,D',r')+(-1)^kH_{\uL,c}(D,-r,D',r')\right),
\end{split}
\end{equation}
where
\begin{equation}\label{eq:deltaL}
\delta_{L}(D,r,D',r'):= 
\begin{cases} 
1, & \text{if }D'=D\text{ and }r'\equiv r\bmod L\text{ and }\\
0, &\text{otherwise},
\end{cases}
\end{equation} 
the function $J_{\alpha}$ is the $J$-Bessel function of index $\alpha$ defined in \eqref{eq:Jbessel} and $H_{\uL,c}(D,r,D',r')$ is the lattice sum
\begin{equation*}
\begin{split}
H_{\uL,c}(D,r,D',r')&:=\sum_{d(c)^{\times},\lambda(c)}e_c\big((\beta(\lambda+r)-D)d^{-1}+(\beta(r\rq{})-D\rq{})d+\beta(r',\lambda+r)\big).
\end{split}
\end{equation*}
\end{enumerate}
\end{theorem}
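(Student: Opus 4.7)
My strategy follows the classical Petersson approach for Poincar\'e series, adapted to the setting of Jacobi forms of lattice index in the style of \cite{EZ} (scalar index) and \cite{BK} (matrix index). For part~(i), absolute and uniform convergence on compact subsets of $\mathfrak{H}\times(L\otimes_{\mathbb{Z}}\mathbb{C})$ will be established by majorizing each summand $|g_{\uL,D,r}|_{k,\uL}\gamma(\tau,z)|$ by a constant multiple of $|c\tau+d|^{-k}$ times a Gaussian factor in the Heisenberg coordinates; the usual Hecke condition $k>2$, combined with the extra polynomial growth from the Heisenberg sum over a lattice of rank $\rk$, forces $k>\rk+2$. Invariance of $P_{D,r}$ under $J_{\uL}(\mathbb{Z})$ is immediate from Lemma~\ref{L:gdr} and the averaging, and cuspidality will follow from the Fourier expansion of part~(ii). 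For the reproducing property, I apply the standard unfolding trick, which collapses the $\gamma$-sum against $\mathfrak{F}_{J_{\uL}(\mathbb{Z})}$ to yield
\begin{equation*}
\langle \phi, P_{D,r}\rangle = \int_{J_{\uL}(\mathbb{Z})_{\infty}\setminus \mathfrak{H}\times(L\otimes_{\mathbb{Z}}\mathbb{C})} \phi(\tau,z)\overline{g_{\uL,D,r}(\tau,z)}\, v^{k} e^{-4\pi\beta(y)/v}\, dV_{\uL}(\tau,z),
\end{equation*}
where a fundamental domain for $J_{\uL}(\mathbb{Z})_{\infty}$ is $u\in[0,1)$, $v>0$, $x \in L\otimes_{\mathbb{Z}}\mathbb{R}/L$, $y\in L\otimes_{\mathbb{Z}}\mathbb{R}$. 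Substituting the Fourier expansion of $\phi$, the $u$-integral enforces $\beta(r')-D' = \beta(r)-D$, and the $x$-integral over the torus is nonzero only for $r'=r$ as elements of $L^\#$ (by orthogonality of the characters $e(\beta(r'-r,\cdot))$, using non-degeneracy of $\beta$). Only the single coefficient $C(D,r)$ survives; the Gaussian $y$-integral contributes $v^{\rk/2}/(2^{\rk/2}\det(\uL)^{1/2})$ after completion of the square, and the remaining integral $\int_{0}^{\infty}v^{k-\rk/2-2}e^{4\pi D v}\,dv$ is a $\Gamma$-value, combining to give exactly $\lambda_{k,\uL,D}$.

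For part~(ii), I split the coset sum over $J_{\uL}(\mathbb{Z})_{\infty}\setminus J_{\uL}(\mathbb{Z})$ according to the lower-left entry $c$ of the $\SL_{2}$-part. The $c=0$ cosets are represented by $(\pm I_{2},(\lambda,0))$ with $\lambda\in L$ (the Heisenberg $y$-coordinate is killed modulo $L$ by $J_{\uL}(\mathbb{Z})_{\infty}$), and a direct slash computation using Lemma~\ref{L:gdr} yields the translates $g_{\uL,D,r+\lambda}$ and $(-1)^{k}g_{\uL,D,-r+\lambda}$, contributing the two $\delta_{L}$ terms. For $c\geq 1$, cosets are parametrized by $d\in(\mathbb{Z}/c\mathbb{Z})^{\times}$ together with a Heisenberg lift $(\lambda,0)$ with $\lambda\in L/cL$. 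After applying the slash action and extracting the $(D',r')$-Fourier coefficient via integration over $x$ and $u$, the finite sum over $d$ and $\lambda$ collapses to the lattice Kloosterman-type sum $H_{\uL,c}(D,r,D',r')$; the Gaussian $y$-integral effectively shifts the weight from $k$ to $k-\rk/2$, and the remaining $u$-integral is a classical integral representation of the $J$-Bessel function, producing $J_{k-\rk/2-1}(4\pi\sqrt{DD'}/c)$ together with the prefactor $(D'/D)^{k/2-\rk/4-1/2}c^{-\rk/2-1}$ and an overall constant $2\pi i^{k}\det(\uL)^{-1/2}$. The $\pm r$ symmetry inherited from the $-I_{2}$ cosets contributes the companion term $H_{\uL,c}(D,-r,D',r')$, and assembling all pieces gives the stated formula for $G_{D,r}(D',r')$.

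The main obstacle will be the detailed bookkeeping in the $c\geq 1$ computation: choosing the Heisenberg coset representatives coherently with the $(c,d)$ parametrization, propagating the twist $e(-c\beta(z)/(c\tau+d))$ through the Gaussian $y$-integration so as to expose exactly the Bessel argument $4\pi\sqrt{DD'}/c$, and correctly isolating the prefactor $(D'/D)^{k/2-\rk/4-1/2}c^{-\rk/2-1}$ with all constants and sign choices matched against the $c=0$ contribution. The scalar-index case $\uL=(\mathbb{Z},(x,y)\mapsto 2mxy)$ of \cite{EZ} and the matrix case of \cite{BK}, where the analogous lattice-Kloosterman sums appear, will serve as useful sanity checks throughout.
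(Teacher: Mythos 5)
Your plan is correct and follows essentially the same route as the paper: unfolding against the Fourier expansion of $\phi$ for the reproducing formula in (i) (with the $u$- and $x$-orthogonality, the Gaussian $y$-integral, and the final $\Gamma$-integral producing $\lambda_{k,\uL,D}$), and for (ii) the split of $J_{\uL}(\mathbb{Z})_{\infty}\setminus J_{\uL}(\mathbb{Z})$ by the lower-left entry $c$, with the $c=0$ cosets giving the $\delta_L$ terms, the $c\geq1$ terms yielding the lattice Kloosterman sums $H_{\uL,c}$ after Fourier-coefficient extraction and a Bessel-type integral (the paper phrases this via a Laplace-transform pair), and the $c<0$ terms supplying the $(-1)^k H_{\uL,c}(D,-r,D',r')$ companion. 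The only caveat is the bookkeeping you already flag: the finite sums over $d\bmod c$ and $\lambda\bmod cL$ only emerge after splitting off the $\alpha\in\mathbb{Z}$ and $\mu\in L$ parts, whose sums unfold the $u$- and $x$-integrals to all of $\mathbb{R}$, exactly as in the paper.
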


\begin{proof}[Proof of Theorem \ref{T:Poincare}]

It was shown in $\S2.2$ of \cite{Br2} that the Poincar\'e series of matrix index are absolutely and uniformly convergent on $\mathfrak{H}\times\mathbb{C}^{\rk}$ for $k>\rk+2$ and clearly 
the same estimates hold in the lattice index case.

For every $\delta$ in $J_{\uL}(\mathbb{Z})$,
 right multiplication by $\delta$ is an automorphism of the orbit space $J_{\uL}(\mathbb{Z})_{\infty}\setminus J_{\uL}(\mathbb{Z})$. Therefore, each $P_{D,r}$ is invariant under the $|_{k,\uL}$ action of $J_{\uL}(\mathbb{Z})$. The fact that it is an element of $S_{k,\uL}$ follows from inspecting its Fourier expansion. 

Next, we compute the Petersson scalar product of $P_{D,r}$ and an arbitrary cusp form of weight $k$ and index $\uL$. Let $\phi$ in $S_{k,\uL}$ have Fourier expansion 
\begin{equation*}
\phi(\tau,z)=\sum_{(D',r')\in\text{supp}(\uL)}C(D',r')e\left((\beta(r')-D')\tau+\beta(r',z)\right).
\end{equation*}
Using the definition of Poincar\'e series, we have:
\begin{equation}\label{eq:phiPpet}
\begin{split}
\langle\phi,P_{D,r}\rangle&=\int_{\mathfrak{F}_{J_{\uL}(\mathbb{Z})}}\omega_{\phi,P_{D,r}}(\tau,z)dV_{\uL}(\tau,z)=\sum_{\gamma\in J_{\uL}(\mathbb{Z})_{\infty}\setminus
J_{\uL}(\mathbb{Z})}\int_{\mathfrak{F}_{J_{\uL}(\mathbb{Z})}}\omega_{\phi,g_{D,r}|_{k,\uL}\gamma}(\tau,z)dV_{\uL}(\tau,z),
\end{split}
\end{equation} 
since the integrand converges absolutely and uniformly and so we can change the order of integration and summation. We want to show that
\begin{equation*}
\omega_{\phi,g_{D,r}|_{k,\uL}\gamma}(\tau,z)=\omega_{\phi,g_{D,r}}(\gamma(\tau,z))
\end{equation*}
for every $\gamma$ in $ J_{\uL}(\mathbb{Z})$, in order to use the classical unfolding argument from the theory of elliptic modular forms. Let $\gamma=(A,h)$, with $A= \left(\begin{smallmatrix} a&b\\c&d
\end{smallmatrix} \right)$ in $\SL_2(\mathbb{Z})$ and $h=(\lambda,\mu)$ in $ H_{\uL}(\mathbb{Z})$. Using \eqref{eq:omega}, we obtain:
\begin{align*}
\omega_{\phi,g_{D,r}|_{k,\uL}\gamma}(\tau,z)&=\phi(\tau,z)\overline{g_{D,r}|_{k,\uL}\gamma(\tau,z)}v^ke^{-\frac{4\pi\beta(y)}{v}}\\
&=\phi(\tau,z)\frac{(c\tau+d)^k}{|c\tau+d|^{2k}}\frac{\left|e\left(\frac{-c\beta(z+\lambda\tau+\mu)}{c\tau+d}+\tau\beta(\lambda)+\beta(\lambda,z)\right)\right|^2}{e\left(\frac{-c\beta(z+\lambda\tau+\mu)}{c\tau+d}+\tau\beta(\lambda)+\beta(\lambda,z)\right)}\overline{g_{D,r}(\gamma(\tau,z))}v^ke^{-\frac{4\pi\beta(y)}{v}}.
\end{align*} 
Since $\phi$ is invariant under the $|_{k,\uL}$ action of $J_{\uL}(\mathbb{Z})$, $\Im(A\tau)=\frac{\Im(\tau)}{|c\tau+d|^2}$ 
and $|e(z)|^2=e^{-4\pi\Im{(z)}}$
, it follows that
 \begin{align*}
\omega_{\phi,g_{D,r}|_{k,\uL}\gamma}(\tau,z)&=\phi(\gamma(\tau,z))\overline{g_{,D,r}(\gamma(\tau,z))}\Im{(A\tau)}^{k}e^{-4\pi\left(\beta(y)v^{-1}+\Im{\left(\frac{-c\beta(z+\lambda\tau+\mu)}{c\tau+d}+\tau\beta(\lambda)+\beta(\lambda,z)\right)}\right)}
\end{align*} 
and so we need to show the following:
\begin{equation*}
\begin{split}
&\beta(y)v^{-1}+\Im{\left(\frac{-c\beta(z+\lambda\tau+\mu)}{c\tau+d}+\tau\beta(\lambda)+\beta(\lambda,z)\right)}=\beta\left(\Im{\left(\frac{z+\lambda\tau+\mu}{c\tau+d}\right)}\right)\Im{(A\tau})^{-1}.
\end{split}
\end{equation*}
This follows from straight-forward calculations, using the fact that, for $z,z_1,z_2$ in $\mathbb{C}$, $\uL=(L,\beta)$ an integral lattice, $\lambda$ in $L$ and $z$ in $L\otimes_{\mathbb{Z}}\mathbb{C}$, the following equalities hold:
\begin{align*} 
&\Im(z_1z_2)=\Re(z_1)\Im(z_2)+\Im(z_1)\Re(z_2),\\
&\Im(\beta(\lambda,z))=\beta(\lambda,\Im(z)),\\
&\Re(\beta(z))=\beta(\Re(z))-\beta(\Im(z)),\\
&\Im(\beta(z))=\beta(\Re(z),\Im(z)).
\end{align*} 
Make the substitution $\gamma(\tau,z)=(\tau',z')$ in \eqref{eq:phiPpet} and, bearing in mind that the volume element $V_{\uL},(\tau,z)$ is invariant under this change of variable, we obtain:
\begin{equation*}
\langle\phi,P_{D,r}\rangle=\sum_{\gamma\in J_{\uL}(\mathbb{Z})_{\infty}\setminus
J_{\uL}(\mathbb{Z})}\int_{\gamma\mathfrak{F}_{J_{\uL}(\mathbb{Z})}}\omega_{\phi,g_{D,r}}(\tau',z')dV_{\uL}(\tau',z').
\end{equation*} 
If $\{\gamma_j\}_j$ is a set of coset representatives for $J_{\uL}(\mathbb{Z})_{\infty}\setminus
J_{\uL}(\mathbb{Z})$, then it is known that a fundamental domain for the action of $J_{\uL}(\mathbb{Z})_{\infty}$ on $\mathfrak{H}\times(L\otimes_{\mathbb{Z}}\mathbb{C})$ is given by $\mathfrak{F}_{J_{\uL}(\mathbb{Z})_{\infty}}=\cup_{j}\gamma_j\mathfrak{F}_{J_{\uL}(\mathbb{Z})}$. Thus, we obtain:
\begin{equation}\label{eq:phiPpetfinal}
\langle\phi,P_{D,r}\rangle=\int_{\mathfrak{F}_{J_{\uL}(\mathbb{Z})_{\infty}}}\phi(\tau,z)\overline{g_{D,r}(\tau,z)}v^{k}e^{-4\pi\beta(y)v^{-1}}dV_{\uL}(\tau,z).
\end{equation} 
Each $\gamma=\left(\left(\begin{smallmatrix} 1&n\\0&1 \end{smallmatrix}\right)
,(0,\mu)\right)\in J_{\uL}(\mathbb{Z})_{\infty}$ acts on $\mathfrak{H}\times(L\otimes_{\mathbb{Z}}\mathbb{C})$ via
\begin{equation}\nonumber
(\tau,z)\mapsto\left(\tau+n,z+\mu\right).
\end{equation} 
Therefore, choose the set
\begin{equation*}
\{(\tau,z)\in\mathfrak{H}\times(L\otimes\mathbb{C}):0\leq
u\leq1,v>0,x\in[0,1]^{\rk},y\in\mathbb{R}^{\rk}\}
\end{equation*} 
as a fundamental domain for the action of $J_{\uL}(\mathbb{Z})_{\infty}$ on $\mathfrak{H}\times(L\otimes_{\mathbb{Z}}\mathbb{C})$. We leave it to the reader to verify that every pair $(\tau',z')\in\mathfrak{H}\times(L\otimes\mathbb{C})$ can be written as $\gamma(\tau,z)$ for some $\gamma$ in $J_{\uL}(\mathbb{Z})_{\infty}$ and some unique $(\tau,z)$ in our chosen fundamental domain. 

Next, insert the Fourier expansion of $\phi$ in \eqref{eq:phiPpetfinal}. It is more convenient at this point to insert the classical form of the Fourier expansion of $\phi$, i.e.
\begin{equation*}
\phi(\tau,z)=\sum_{\substack{n'\in\mathbb{Z},r'\in
L^\#\\n'>\beta(r')}}c(n',r')e\left(n'\tau+\beta(r',z)\right),
\end{equation*}
where $c(n',r')=C(\beta(r')-n',r')$. Let $n:=\beta(r)-D$. Then:
\begin{align}\nonumber
\langle\phi,P_{D,r}\rangle&\nonumber=\int_0^1\int_0^\infty\int_{[0,1]^{\rk}}\int_{\mathbb{R}^{\rk}}
\sum_{\substack{n'\in\mathbb{Z},r'\in L^\#\\n'>\beta(r')}}c(n',r')e(n'\tau+\beta(r',z))\\
&\nonumber\quad\times\overline{e(n\tau+\beta(r,z))}v^{k-\rk-2}e^{-4\pi\beta(y)v^{-1}}dydxdvdu \\
&\label{eq:phiPpetF}=\sum_{\substack{n'\in\mathbb{Z},r'\in L^\#\\n'>\beta(r')}}c(n',r')\int_0^1\int_0^\infty\int_{[0,1]^{\rk}}\int_{\mathbb{R}^{\rk}}e(u(n'-n))e^{-2\pi v(n'+n)}\\
&\nonumber\quad\times e(\beta(r'-r,x)+i\beta(r+r',y))v^{k-\rk-2}e^{-4\pi\beta(y)v^{-1}}dydxdvdu\\
&\label{eq:phiPpetinner}=c(n,r)\int_0^\infty e^{-4\pi nv}v^{k-\rk-2}\int_{\mathbb{R}^{\rk}} e^{-4\pi\left(\beta(r,y)+\beta(y)v^{-1}\right)}dydv.
\end{align}
In \eqref{eq:phiPpetF}, we used the well-known formula
\begin{equation}\label{eq:exporth}
\int_0^1e(u(n'-n))du=\begin{cases}
1, & \text{if }n=n'\text{ and }\\
0, & \text{otherwise}
\end{cases}
\end{equation}
and the fact that a similar result holds for the integral in the $x$ variable: 
\begin{equation}\label{eq:exporthL}
\int_{[0,1]^{\rk}}e(\beta(r'-r,x))dx=
\begin{cases}
1, & \text{if }r=r'\text{ and }\\
0, & \text{otherwise.}
\end{cases}
\end{equation}
The inner integral in \eqref{eq:phiPpetinner} (denote it by $I$ for simplicity) can be computed by diagonalizing $\beta$. Since $\uL$ is positive-definite and symmetric, its Gram matrix $G$ can be diagonalized with a real orthogonal matrix, i.e. $G=Q^t\mathcal{D}Q$ for some $\rk\times\rk$ matrix $Q$ with real entries that satisfies $Q^tQ=I_{\rk}$ and some diagonal matrix $\mathcal{D}=\text{diag}(\alpha_1,\dots,\alpha_{\rk})$. In particular, it follows that $\prod_j\alpha_j=\det(\uL)$. Making the change of variable  $Qy=y'$, we obtain:
\begin{align*}
I&=\int_{\mathbb{R}^{\rk}}e^{-4\pi((Qr)^t\mathcal{D}(Qy)+\frac{1}{2}v^{-1}(Qy)^t\mathcal{D}(Qy))}dy=\int_{\mathbb{R}^{\rk}}e^{-4\pi(\beta'(Qr,y')+v^{-1}\beta'(y'))}dy'.
\end{align*} 
The bilinear form $\beta'(x,y):=x^t\mathcal{D}y$ is diagonalized and it satisfies $\beta'(x,y)=\beta(Q^{-1}x,Q^{-1}y)$. Writing out the exponents explicitly and dropping the primes yields
\begin{align*}
I&=\int_{\mathbb{R}^{\rk}}e^{-4\pi\left(\sum_{j=1}^{\rk}\alpha_j(Qr)_jy_j+(2v)^{-1}\sum_{j=1}^{\rk}\alpha_jy_j^2\right)}dy=\prod_{j=1}^{\rk}\left(\int_{\mathbb{R}}e^{-2\pi\alpha_j(2(Qr)_jy_j+v^{-1}y_j^2)}dy_j\right).
\end{align*} 
Complete the square in the exponent and obtain
\begin{align*}
I&=\prod_{j=1}^{\rk}\left(e^{2\pi\alpha_jv(Qr)_j^2}\int_{\mathbb{R}}e^{-2\pi\alpha_jv^{-1}(y_j+v(Qr)_j)^2}dy_j\right)=\prod_{j=1}^{\rk}\left(e^{2\pi\alpha_jv(Qr)_j^2}\int_{\mathbb{R}}e^{-2\pi\alpha_jv^{-1}y_j^2}dy_j\right).
\end{align*} 
Next, substitute $(2\pi\alpha_jv^{-1})^{\frac{1}{2}}y_j=x_j$ and use the standard Gaussian integral to obtain
\begin{align*}
I=e^{4\pi v\frac{1}{2}\sum_{j=1}^{\rk}\alpha_j(Qr)_j^2}\prod_{j=1}^{\rk}\left(\frac{v}{2\alpha_j}\right)^{\frac{1}{2}}=e^{4\pi v\beta(r)}v^{\frac{\rk}{2}}2^{-\frac{\rk}{2}}\det(\uL)^{-\frac{1}{2}}. 
\end{align*} 
Thus, 
\begin{align*}
\langle\phi,P_{D,r}\rangle&=c(n,r)2^{-\frac{\rk}{2}}\det(\uL)^{-\frac{1}{2}}\int_0^\infty
e^{-4\pi v(n-\beta(r))}v^{k-\frac{\rk}{2}-2}dv=\lambda_{k,\uL,D}C(D,r), 
\end{align*}
where the constant $\lambda_{k,\uL,D}$ is defined in \eqref{eq:lambda}. Hence, the proof of item $(i)$ is complete.

We proceed with the proof of $(ii)$. To obtain the Fourier expansion of $P_{D,r}$, choose
\begin{equation*}
\left\{\left(A,(\lambda,0)^A\right):A\in\SL_2(\mathbb{Z})_{\infty}\setminus\SL_2(\mathbb{Z}),\lambda\in
L\right\}
\end{equation*} 
as a set of coset representatives for $J_{\uL}(\mathbb{Z})_{\infty}\setminus J_{\uL}(\mathbb{Z})$. Coset representatives of $\SL_2(\mathbb{Z})_{\infty}\setminus\SL_2(\mathbb{Z})$ are well-known and given by matrices $A= \left(\begin{smallmatrix} a&b\\c&d\end{smallmatrix}\right)$ with $\gcd(c,d)=1$ and, for each pair $(c,d)$, choose $a$ and $b$ in $\mathbb{Z}$ such that $ad-bc=1$. The vectors $(\lambda,0)^A$ are therefore equal to $(a\lambda,b\lambda)$, with $\lambda$ in $ L$. Inserting this into \eqref{eq:Pdef} and letting $n:=\beta(r)-D$ as usual, we obtain:
\begin{equation*}
\begin{split}
P_{D,r}(\tau,z)&=\sum_{\substack{\lambda\in L,(c,d)\in\mathbb{Z}^2\\ \gcd(c,d)=1}}e\left(\frac{-c\beta(z+(a\tau+b)\lambda)}{c\tau+d}+a^2\tau\beta(\lambda)+a\beta(\lambda,z)\right)\\
&\quad\times
e\left(n\frac{a\tau+b}{c\tau+d}+\beta\left(r,\frac{z+(a\tau+b)\lambda}{c\tau+d}\right)\right)(c\tau+d)^{-k}\\
&=\sum_{\substack{(c,d)\in\mathbb{Z}^2\\ \gcd(c,d)=1}}\sum_{\lambda\in L}(c\tau+d)^{-k}e\bigg(\beta(z)\frac{-c}{c\tau+d}+\beta(\lambda)\frac{a\tau+b}{c\tau+d}\\
&\quad+\frac{\beta(\lambda,z)}{c\tau+d}+n\frac{a\tau+b}{c\tau+d}+\frac{\beta(r,z)}{c\tau+d}+\beta(r,\lambda)\frac{a\tau+b}{c\tau+d}\bigg),
\end{split}
\end{equation*} 
after rearranging terms. In order to obtain the factor of $\beta(\lambda)$, we wrote
\begin{equation*}
\begin{split}
a^2\tau(c\tau+d)-c(a\tau+b)^2=a\tau-abc\tau-b(ad-1)=(a\tau+b)-ab(c\tau+d)
\end{split}
\end{equation*}
and, to obtain the factor of $\beta(\lambda,z)$, we used the following well-known identity
\begin{equation}\label{eq:modulartrick}
\frac{a\tau+b}{c\tau+d}=\frac{a}{c}-\frac{1}{c(c\tau+d)}.
\end{equation}
We can split this sum up into two sums, according to whether $c=0$ or $c\neq0$. When $c=0$, we have $d=\pm1$ and this forces $a=d=\pm1$ and $b$ can be any integer. Since $r\in L^{\#}$ and $\lambda\in L$, we have $\beta(r,\lambda),\beta(\lambda)\in\mathbb{Z}$ and we obtain a contribution of 
\begin{align} &\nonumber\sum_{\lambda\in
L}\Big[e\left(\beta(\lambda)\tau+\beta(\lambda,z)+n\tau+\beta(r,z)+\beta(r,\lambda)\tau\right)\\
&\nonumber+(-1)^{-k}e(\beta(\lambda)\tau-\beta(\lambda,z)+n\tau-\beta(r,z)+\beta(r,\lambda)\tau)\Big]\\
=&\label{eq:0contrib}\sum_{\lambda\in
L}e\left((\beta(\lambda+r)-D)\tau\right)\left[e(\beta(\lambda+r,z))+(-1)^ke(\beta(-(\lambda+r),z))\right].
\end{align} 
We want to write this expression as a standard Fourier expansion of a Jacobi form (like the one in \eqref{eq:jacobifourier}). Set $r':=\lambda+r$ in \eqref{eq:0contrib}, which implies we are summing over all $r'$ in $ L^\#$ such that $r'\equiv r\bmod L$. Introduce the summation over all $D'$ in $\mathbb{Q}$ with $D'<0$ such that $(D',r')\in\text{supp}(\uL)$ and impose the condition that $D'=D$. Equation \eqref{eq:0contrib} becomes
\begin{equation*}
\sum_{\substack{(D',r')\in\text{supp}(\uL)\\D'<0}}e\left((\beta(r')-D')\tau+\beta(r',z)\right)\left[\delta_{L}(D,r,D',r')+(-1)^k\delta_{L}(D,r,D',-r')\right]
\end{equation*} 
where $\delta_{L}$ is defined in \eqref{eq:deltaL}.

For the contribution coming from terms with $c\neq0$, Lemma \ref{L:gdr} implies that the terms with $c<0$ are obtained from those with $c>0$, by multiplying their contribution with $(-1)^k$ and replacing $z$ by $-z$. Thus, we focus on the former case. Use \eqref{eq:modulartrick} again to write the contribution coming from terms with $c>0$ as
\begin{align*} 
&\sum_{\substack{c>0\\ \gcd(c,d)=1}}\sum_{\lambda\in
L}(c\tau+d)^{-k}e\bigg(-\frac{c}{c\tau+d}\beta\left(z-\frac{1}{c}\lambda\right)+\frac{a\beta(\lambda)}{c}\\
&+\beta\left(r,\frac{1}{c\tau+d}\left(z-\frac{1}{c}\lambda\right)+\frac{a}{c}\lambda\right)+n\left(\frac{a}{c}-\frac{1}{c(c\tau+d)}\right)\bigg).
\end{align*} 
Write $d$ as $d'+\alpha c$, where $d'$ is the reduction of $d$ modulo $c$ and $\alpha\in\mathbb{Z}$.  As $d$ runs through $\mathbb{Z}$ with the condition that $\gcd(d,c)=1$ in the above equation, the new variable $d'$ runs through congruence classes modulo $c$ that are coprime to $c$ (we will drop the prime from the notation and write $d(c)^\times$ for simplicity) and $\alpha$ runs through $\mathbb{Z}$. Similarly, write $\lambda$ as $\lambda'+\mu c$, where $\lambda'$ is the reduction of $\lambda$ modulo $cL$ and $\mu\in L$. It is clear that $\lambda'$ runs through the coset representatives of  $L/cL$ and that $\mu$ runs through $L$. We obtain a contribution of
\begin{align*}
&\sum_{\substack{c>0,\alpha\in\mathbb{Z},d(c)^{\times}\\ \mu\in L,\lambda(c)}}c^{-k}\left(\tau+\frac{d}{c}+\alpha\right)^{-k}e\Bigg(\frac{-\beta\left(z-\frac{1}{c}\lambda-\mu\right)}{\tau+\frac{d}{c}+\alpha}+\frac{a\beta(\lambda)}{c}\\
&\quad\quad+\frac{\beta\left(r,z-\frac{1}{c}\lambda-\mu\right)}{c(\tau+\frac{d}{c}+\alpha)}+\frac{a\beta(r,\lambda)}{c}+n\left(\frac{a}{c}-\frac{1}{c^2(\tau+\frac{d}{c}+\alpha)}\right)\Bigg)\\
&\quad\quad=\sum_{c>0}c^{-k}\sum_{d(c)^{\times},\lambda(c)}e_c\left(\left(\beta(\lambda)+\beta(r,\lambda)+n\right)d^{-1}\right)\mathscr{F}_{c;(n,r)}\left(\tau+\frac{d}{c},z-\frac{1}{c}\lambda\right),
\end{align*} 
where $d^{-1}$ is the inverse of $d$ modulo $c$ and we have used the fact that $ad\equiv1\bmod c$. Furthermore, the function $\mathscr{F}_{c;(n,r)}:\mathfrak{H}\times(L\otimes_{\mathbb{Z}}\mathbb{C})\to\mathbb{C}$ is defined as
\begin{align*}
\mathscr{F}_{c;(n,r)}(\tau,z):=&\sum_{\alpha\in\mathbb{Z},\mu\in L}(\tau+\alpha)^{-k}e\left(\frac{-\beta(z-\mu)}{\tau+\alpha}+\frac{\beta(r,z-\mu)}{c(\tau+\alpha)}-\frac{n}{c^2(\tau+\alpha)}\right). 
\end{align*}
This function has period $1$ in $\tau$ and period $L$ in $z$ and hence it has a Fourier expansion of the form
\begin{equation*}
\mathscr{F}_{c;(n,r)}(\tau,z)=\sum_{n'\in\mathbb{Z},r'\in L^\#}f(n',r')e\left(n'\tau+\beta(r',z)\right). 
\end{equation*}
We can compute the Fourier coefficients of $\mathscr{F}_{c;(n,r)}$ by integrating it against an appropriate exponential function, since $P_{D,r}$ is absolutely and uniformly convergent in $\tau$ and $z$, and hence so is $\mathscr{F}$. We remind the reader that we write $\tau=u+iv$ and $z=x+iy$. For fixed $v>0$, $y$ in $\mathbb{C}^{\rk}$, $m$ in $\mathbb{Z}$ and $s$ in $L^{\#}$, we have:
\begin{equation*}
 \begin{split}
  &\int_{[0,1]}\int_{[0,1]^{\rk}}\mathscr{F}_{c;(n,r)}(\tau,z)e\left(-mu-\beta(s,x)\right)dxdu=f(m,s)e(imv)e\left(\beta(s,iy)\right),
 \end{split}
\end{equation*}
by the standard orthogonality relations \eqref{eq:exporth} and \eqref{eq:exporthL}. Thus, we can evaluate $f(n',r')$ as
\begin{equation}\label{eq:Ffourier}
\begin{split}
f(n',r')&=\sum_{\alpha\in\mathbb{Z},\mu\in
L}\int_{[0,1]}\int_{[0,1]^{\rk}}(\tau+\alpha)^{-k}e\bigg(\frac{-1}{\tau+\alpha}\beta(z-\mu)\\
&+\frac{1}{c(\tau+\alpha)}\beta(r,z-\mu)-\frac{n}{c^2(\tau+\alpha)}\bigg)e(-n'\tau)e\left(-\beta(r',z)\right)dxdu\\
&=\int_{-\infty}^{\infty}\tau^{-k}e(-n'\tau)\int_{-\infty}^{\infty}\dots\int_{-\infty}^{\infty} e\left(\frac{-1}{\tau}\beta(z)+\frac{1}{c\tau}\beta(r,z)-\frac{n}{c^2\tau}-\beta(r',z)\right)dxdu.
\end{split}
\end{equation} 
Making the change of variable $z\mapsto z+\frac{1}{c}r-\tau r'$ and setting $D'=\beta(r')-n'$ implies the inner multiple integral becomes
\begin{align*}
e_c\left(-\beta(r',r)\right)e\bigg(\beta(r')\tau+\frac{D}{c^2\tau}\bigg)\int_{-\infty}^{\infty}\dots\int_{-\infty}^{\infty}e\left(\frac{-\beta(z)}{\tau}\right)dx.
\end{align*} 
Using the generalized Gaussian integral, it can be shown that
\begin{equation}\label{eq:innermult}
\int_{-\infty}^{\infty}\dots\int_{-\infty}^{\infty}e\left(\frac{-\beta(z)}{\tau}\right)dx=\det(\uL)^{-\frac{1}{2}}\left(\frac{\tau}{i}\right)^{\frac{\rk}{2}}
\end{equation}
and hence 
\begin{equation*}
f(n',r')=\frac{e_c\left(-\beta(r',r)\right)}{\det(\uL)^{\frac{1}{2}}}\int_{-\infty}^{\infty}\left(\frac{\tau}{i}\right)^{\frac{\rk}{2}}\tau^{-k}e\left(D'\tau+\frac{D}{c^2\tau}\right)du.
\end{equation*}
To compute this integral, consider the two separate cases: $D'\geq0$ and $D'<0$. 

If $D'\geq0$, then let $R>0$ and consider the closed contour integral
\begin{equation}\label{eq:intclosed1}
 \oint\frac{1}{(u+iv)^{k-\frac{\rk}{2}}}e\left(D'(u+iv)+\frac{D}{c^2(u+iv)}\right)du,
\end{equation}
over the contour in Figure \ref{F:contour}, formed by traversing the line segment $L=\{t:-R\leq t\leq R\}$ from left to right and the semi-circle $\mathcal{C}=\{Re^{i\theta}:0\leq\theta\leq\pi\}$ in the counter-clockwise direction. 
\begin{figure}[ht]\label{F:contour}
\centering 
\begin{tikzpicture}[scale=2]
    \coordinate (XAxisMin) at (-1.5,0);
    \coordinate (XAxisMax) at (1.5,0);
    \coordinate (YAxisMin) at (0,-0.5);
    \coordinate (YAxisMax) at (0,1.5);

    \draw (1,0) node [below] {$R$};
    \draw (-1.05,0) node [below] {$-R$};
    \draw [thin, black,-latex] (XAxisMin) -- (XAxisMax);
    \draw [thin, black,-latex] (YAxisMin) -- (YAxisMax);
    
\draw[ultra thick,-->, cadmiumred] (1,0) arc (0:180:1);
\draw (-0.65,0.8) node [left] {$\mathcal{C}$};

\draw[ultra thick,cadmiumred,-->](-1,0) -- (1,0);

\filldraw[black] (0,0) circle (1pt);
\draw (0,0.1) node [right] {$O$};

\draw (0.5,0) node [above] {$L$};

   \end{tikzpicture}
\caption{}
\end{figure}

The integral we seek is
\begin{equation}\label{eq:intC1}
\lim_{R\to\infty} \int_{L}\frac{1}{(u+iv)^{k-\frac{\rk}{2}}}e\left(D'(u+iv)+\frac{D}{c^2(u+iv)}\right)du.
\end{equation}
The integrand is holomorphic inside our chosen contour and therefore \eqref{eq:intclosed1} is equal to zero by Cauchy's Theorem. Using the estimation lemma from complex analysis and our chosen parametrization, the absolute value of the integral over the contour $\mathcal{C}$ is less than or equal to
\begin{equation*}
  \pi R\max_{u\in\mathcal{C}}\left|\frac{1}{(u+iv)^{k-\frac{\rk}{2}}}e\left(D'(u+iv)+\frac{D}{c^2(u+iv)}\right)\right|.
\end{equation*}
It is straight forward to show that this expression converges to zero as $R\to\infty$. Therefore, so does \eqref{eq:intC1} and hence the Fourier coefficients $f(n',r')$ vanish when $D'\geq0$.

If $D'<0$, then make the substitution $\tau=\frac{i}{c}(\frac{D}{D'})^{1/2}s$ and write $s=u+iv$ by abuse of notation. Integrating in $\Re(\tau)$ from $-\infty$ to $\infty$ means we are integrating in $\Im(s)$ from $\infty$ to $-\infty$ and note that $\Re(s)>0$. We obtain: 
\begin{align}\nonumber
f(n',r')=&\det(\uL)^{-\frac{1}{2}}e_c\left(-\beta(r',r)\right)i^{-k}c^{k-\frac{\rk}{2}-1}\left(\frac{D'}{D}\right)^{\frac{k}{2}-\frac{\rk}{4}-\frac{1}{2}}\\
&\label{eq:intlapl}\times\int_{-\infty}^{\infty}s^{\frac{\rk}{2}-k}\exp\left(\frac{2\pi(DD')^{\frac{1}{2}}}{c}\left(s-s^{-1}\right)\right)dv.
\end{align} 
For fixed $\sigma>0$ and $\kappa>0$, the functions 
\begin{equation*}
t\mapsto f(t)=\left(\frac{t}{\kappa}\right)^{\frac{\sigma-1}{2}}J_{\sigma-1}(2\sqrt{\kappa t})\text{ ($t>0$)}
\end{equation*}
and 
\begin{equation*}
s\mapsto F(s)=s^{-\sigma}e^{-\frac{\kappa}{s}}\text{ ($\Re(s)>0$)}
\end{equation*}
are mutually inverse with respect to the Laplace transform, i.e.
\begin{equation*}
f(t)=\frac{1}{2\pi i}\int_{C-i\infty}^{C+i\infty}F(s)e^{st}ds, \quad\quad C>0.
\end{equation*}
Thus, if we take $\kappa=\frac{2\pi(DD')^{1/2}}{c}$ and $\sigma=k-\frac{\rk}{2}$, then the integral \eqref{eq:intlapl} is equal to $2\pi\cdot f\left(t\right)$, where $t=\kappa=\frac{2\pi(DD')^{1/2}}{c}$. Therefore, 
\begin{align*}
f(n',r')&=\frac{2\pi i^{-k}}{\det(\uL)^{\frac{1}{2}}}e_c\left(-\beta(r',r)\right)c^{k-\frac{\rk}{2}-1}\left(\frac{D'}{D}\right)^{\frac{k}{2}-\frac{\rk}{4}-\frac{1}{2}}J_{k-\frac{\rk}{2}-1}\left(\frac{4\pi(DD')^{\frac{1}{2}}}{c}\right)
\end{align*}
and we obtain the following contribution from the terms with $c>0$:
\begin{equation*}
\begin{split}
&\sum_{c>0}\sum_{d(c)^{\times},\lambda(c)}e_c\left(\left(\beta(\lambda)+\beta(r,\lambda)+n\right)d^{-1}-\beta(r',r)\right)\sum_{\substack{n'\in\mathbb{Z},r'\in L^\#\\\beta(r')<n'}}
\frac{2\pi i^{-k}}{\det(\uL)^{\frac{1}{2}}}c^{-\frac{\rk}{2}-1}\\
&\times\left(\frac{D'}{D}\right)^{\frac{k}{2}-\frac{\rk}{4}-\frac{1}{2}}J_{k-\frac{\rk}{2}-1}\left(\frac{4\pi(DD')^{\frac{1}{2}}}{c}\right)e\left(n'\left(\tau+\frac{d}{c}\right)+\beta\left(r',z-\frac{\lambda}{c}\right)\right)\\
&=\frac{2\pi
i^k}{\det(\uL)^{\frac{1}{2}}}\left(\frac{D'}{D}\right)^{\frac{k}{2}-\frac{\rk}{4}-\frac{1}{2}}\sum_{(D',r')\in\text{supp}(\uL)}\sum_{c>0} J_{k-\frac{\rk}{2}-1}\left(\frac{4\pi(DD')^{\frac{1}{2}}}{c}\right)\\
&\times c^{-\frac{\rk}{2}-1}(-1)^k H_{\uL,c}(D,r,D',-r')e\left((\beta(r')-D')\tau+\beta(r',z)\right),
\end{split}
\end{equation*}
where
\begin{equation*}
\begin{split}
H_{\uL,c}(D,r,D',r')&:=\sum_{d(c)^{\times},\lambda(c)}e_c\big((\beta(\lambda+r)-D)d^{-1}+(\beta(r\rq{})-D\rq{})d+\beta(r',\lambda+r)\big).
\end{split}
\end{equation*}
We remind the reader that, in the last equation, $d$ runs through $\mathbb{Z}_c^{\times}$, the set of invertible residue classes modulo $c$, and $\lambda$ runs through a complete set of representatives of $L/cL$ and $d^{-1}$ denotes the inverse of $d$ modulo $c$.

Furthermore, when $c<0$ we obtain the same contribution multiplied by $(-1)^k$ and with $z$ replaced by $(-z)$. Using the bi-linearity of $\beta$, we can simply move the
minus sign in front of $r'$ and relabel $r':=(-r')$ by abuse of notation. The observation that $H_{\uL,c}(D,r,D',-r')=H_{\uL,c}(D,-r,D',r')$ concludes the proof. 
\end{proof}

Note that \eqref{eq:Pr-r} follows from Theorem \ref{T:Poincare}, $(ii)$. Furthermore, it is clear from the definitions of $\delta_L(D,r,D',r')$ and $H_{\uL,c}(D,r,D',r')$ that the Poincar\'e series $P_{D,r}$ only depends on $r\bmod L$. We can rewrite the lattice sum $H_{\uL,c}(D,r,D',r')$ with the help of Kloosterman sums. For fixed $c$ in $\mathbb{N}$ and $m,n$ in $\mathbb{Z}$, the Kloosterman sum $K(m,n;c)$ is defined as
\begin{equation*}
K(m,n;c)\sum_{d\in\mathbb{Z}_c^{\times}}e_c(md+nd^{-1}),
\end{equation*}
where $d^{-1}$ denotes the inverse of $d$ modulo $c$, as usual. Thus,
\begin{equation*}
H_{\uL,c}(D,r,D',r')=\sum_{\lambda(c)}e_c(\beta(r',\lambda+r))K(\beta(r')-D',\beta(\lambda+r)-D;c).
\end{equation*}
Finally, let $m$ be a positive-definite, symmetric, half-integral $g\times g$ matrix. When applied to the lattice $\uL=(\mathbb{Z}^{g,1},(x,y)\mapsto x^t2my)$, Theorem \ref{T:Poincare} agrees with the results in \cite{BK}. With the notation in \cite{BK}, let $\mathcal{D}$ and $\mathcal{D}'$ be the determinants of the block matrices $\left(\begin{smallmatrix}2n&r\\r^t&2m
\end{smallmatrix}\right)$ and $\left(\begin{smallmatrix}2n'&r'\\r'^t&2m
\end{smallmatrix}\right)$, respectively. Then:
 \begin{equation*}
 \begin{split}
 \lambda_{k,m,\mathcal{D}}&=\lambda_{k,\uL,-\frac{\mathcal{D}}{2\det(\uL)}},\\
g_{k,m;(n,r)}(n',r')&=G_{-\frac{\mathcal{D}}{2\det(\uL)},\frac{1}{2}m^{-1}r^t}\left(-\frac{\mathcal{D'}}{2\det(\uL)},\frac{1}{2}m^{-1}r'^t\right),
\end{split}
\end{equation*} 
where the left-hand side uses the notation in \cite{BK}. 

\section{Jacobi--Eisenstein series}\label{S:E}

The analogue result of Theorem \ref{T:Poincare} for Eisenstein series is the following:

\begin{theorem}\label{T:Eisenstein}
Let $k$ be an integer and let $\uL=(L,\beta)$ be a positive-definite, even lattice of rank $\rk$. The Eisenstein series satisfies the following:
\begin{enumerate}[(i)] 
\item If $k>\frac{\rk}{2}+2$, then $E_{r}$ is absolutely and uniformly convergent on compact subsets of $\mathfrak{H}\times(L\otimes_{\mathbb{Z}}\mathbb{C})$ and it is an element of $J_{k,\uL}$. Furthermore, it is orthogonal to cusp forms of the same weight and index. 
\item The Eisenstein series $E_{r}$ has the following Fourier expansion: 
\begin{equation*}
\begin{split}
E_{r}(\tau,z)&=\frac{1}{2}\sum_{\substack{r'\in L^\#\\
\beta(r')\in\mathbb{Z}}}\left(\delta(r,r')+(-1)^{k}\delta(-r,r')\right)e\left(\tau\beta(r')+\beta(r',z)\right)\\
&+\sum_{\substack{(D',r')\in\text{supp}(\uL)\\ D'<0}}G_{r}(D',r')e\left((\beta(r')-D')\tau+\beta(r',z)\right),
\end{split}
\end{equation*} 
where 
\begin{equation*}
 \delta(r,r'):=
 \begin{cases}
                1, & \text{if }r'\equiv r\bmod L\text{ and }\\
                0, & \text{otherwise}
 \end{cases}
\end{equation*}
and
\begin{equation}\label{eq:GEisenstein}
\begin{split}
G_{r}(D',r')&:=\frac{(2\pi)^{k-\frac{\rk}{2}}i^k}{2\det(\uL)^{\frac{1}{2}}\Gamma\left(k-\frac{\rk}{2}\right)}(-D')^{k-\frac{\rk}{2}-1}\\
&\times\sum_{c\geq1}c^{-k}\left(H_{\uL,c}(r,D',r')+(-1)^kH_{\uL,c}(-r,D',r')\right),
\end{split}
\end{equation}
where $H_{\uL,c}(r,D',r')$ is the lattice sum
\begin{equation}\label{eq:Heis}
\begin{split}
H_{\uL,c}(r,D',r'):=\sum_{\lambda(c),d(c)^\times}&e_c\left(\beta(\lambda+r)d^{-1}+(\beta(r\rq{})-D\rq{})d+\beta(r\rq{},\lambda+r)\right).
\end{split}
\end{equation}
\end{enumerate}
\end{theorem}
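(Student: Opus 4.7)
The plan is to mirror the proof of Theorem \ref{T:Poincare} almost step by step, taking advantage of the fact that $E_{r} = \tfrac{1}{2}P_{0,r}$ formally, but with $D=0$ the function $g_{\uL,0,r}$ is bounded rather than exponentially small, which both weakens the convergence threshold and replaces the $J$--Bessel integral by a Gamma--function integral.

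For part $(i)$, I would first derive absolute and uniform convergence on compacts. The cleanest route is to invoke the theta series representation \eqref{eq:Eistheta}: the sum $\tfrac{1}{2}\sum_{A\in\Gamma_\infty\setminus\Gamma}\vartheta_{\uL,r}|_{k,\uL}A$ is, after applying $\varphi$ from Theorem \ref{T:jacobivv}, an ordinary (vector--valued) Eisenstein series of weight $k-\tfrac{\rk}{2}$ for $\rho_{\uL}^*$, whose majorant $\sum (c\tau+d)^{-(k-\rk/2)}$ converges absolutely precisely when $k>\tfrac{\rk}{2}+2$. Invariance under the $|_{k,\uL}$ action of $J_{\uL}(\mathbb{Z})$ is automatic from the construction (right multiplication permutes cosets of $J_{\uL}(\mathbb{Z})_{\infty}\setminus J_{\uL}(\mathbb{Z})$), and once part $(ii)$ is established the Fourier expansion has the correct shape to lie in $J_{k,\uL}$.

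For the orthogonality statement, I would repeat verbatim the unfolding computation from the proof of Theorem \ref{T:Poincare}$(i)$. The identity $\omega_{\phi,g_{0,r}|_{k,\uL}\gamma}(\tau,z)=\omega_{\phi,g_{0,r}}(\gamma(\tau,z))$ relies only on the $|_{k,\uL}$--invariance of $\phi$ and on the real/imaginary--part identities listed there, not on the value of $D$; hence the same manipulations yield
\begin{equation*}
\langle\phi,E_{r}\rangle=\frac{1}{2}\int_{\mathfrak{F}_{J_{\uL}(\mathbb{Z})_{\infty}}}\phi(\tau,z)\overline{g_{0,r}(\tau,z)}\,v^{k}e^{-4\pi\beta(y)v^{-1}}\,dV_{\uL}(\tau,z).
\end{equation*}
Inserting the Fourier expansion of $\phi$ and integrating in $u$ and $x$ over $[0,1]\times[0,1]^{\rk}$, the orthogonality relations \eqref{eq:exporth}--\eqref{eq:exporthL} collapse the double sum to the single term $n'=\beta(r)$, $r'\equiv r\bmod L$, whose coefficient is $c(\beta(r),r)=C(0,r)$. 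Since $\phi$ is a cusp form, $C(0,r)=0$, so $\langle\phi,E_{r}\rangle=0$. Rapid decay of cusp forms in the cusp ensures absolute convergence of the unfolded integral, justifying every interchange.

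For part $(ii)$, I would use the same coset representatives $\{(A,(\lambda,0)^A)\}$ and rerun the computation, splitting the sum according to $c=0$ versus $c\neq 0$. The $c=0$ contribution is identical to the corresponding step in the Poincar\'e proof with $D=0$: it produces exactly the singular term $\tfrac{1}{2}\sum_{r'}(\delta(r,r')+(-1)^k\delta(-r,r'))e(\tau\beta(r')+\beta(r',z))$. For $c>0$, the auxiliary function $\mathscr{F}_{c;(n,r)}$ with $n=\beta(r)$ (hence $D=0$) still has period $1$ in $\tau$ and period $L$ in $z$, and the computation of its Fourier coefficients $f(n',r')$ proceeds exactly as in \eqref{eq:Ffourier}, reducing via the substitution $z\mapsto z+\tfrac{1}{c}r-\tau r'$ and the Gaussian integral \eqref{eq:innermult} to
\begin{equation*}
f(n',r')=\frac{e_c(-\beta(r',r))}{\det(\uL)^{\frac{1}{2}}\,i^{\rk/2}}\int_{-\infty}^{\infty}\tau^{\frac{\rk}{2}-k}e(D'\tau)\,du,
\end{equation*}
with $D'=\beta(r')-n'$.

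The main obstacle, and the only place where the Eisenstein case genuinely diverges from the Poincar\'e one, is evaluating this last integral. I would handle it by the classical Hankel/contour formula: for $s=k-\tfrac{\rk}{2}>1$ and $\tau=u+iv$ with $v>0$ fixed, $\int_{-\infty}^{\infty}\tau^{-s}e(D'\tau)\,du$ vanishes when $D'\geq 0$ (close the contour in the upper half--plane, where the integrand is holomorphic and decays on large semi--circles) and equals $\tfrac{(-2\pi i)^{s}}{\Gamma(s)}(-D')^{s-1}$ when $D'<0$ (Hankel contour / reciprocal Gamma integral). Substituting back, isolating the factor $(-2\pi i)^{k-\rk/2}=(2\pi)^{k-\rk/2}(-i)^{k-\rk/2}$, combining with the prefactor $i^{-\rk/2}$ and summing over $d(c)^\times$ and $\lambda(c)$ produces the lattice sum $H_{\uL,c}(r,D',r')$ in \eqref{eq:Heis}. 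Finally, as in the Poincar\'e case, the $c<0$ terms contribute $(-1)^k$ times the $c>0$ contribution with $r$ replaced by $-r$, assembling into \eqref{eq:GEisenstein}. The care needed in tracking the phases $i^{-\rk/2}$, $(-2\pi i)^{k-\rk/2}$ and the sign of $D'$ is where I expect bookkeeping errors to be most likely, so I would verify the final $i^k$ factor by specializing to the scalar--index case and comparing with \cite{BK}.
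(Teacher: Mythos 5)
Your proposal follows essentially the same route as the paper: part $(i)$ is proved by the identical unfolding argument, which collapses to the single coefficient $C(0,r)=0$, and part $(ii)$ reruns the Poincar\'e computation with the $c=0$/$c\neq0$ split, the key integral $\int\tau^{\frac{\rk}{2}-k}e(D'\tau)\,du$ being evaluated by your Hankel-contour formula, which is exactly the inverse Laplace transform pair $(t^{\nu-1},\Gamma(\nu)s^{-\nu})$ the paper uses, and yields the same factor $(2\pi)^{k-\frac{\rk}{2}}i^{-k}(-D')^{k-\frac{\rk}{2}-1}/\Gamma\left(k-\frac{\rk}{2}\right)$. The only cosmetic difference is that you sketch the convergence bound for $k>\frac{\rk}{2}+2$ via the theta-decomposition majorant, where the paper simply cites \cite{A}.
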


This theorem can be proved by following the steps in the proofs of Theorem \ref{T:Poincare}, up to a certain point. We pick up from where the differences arise.

\begin{proof}[Proof of Theorem \ref{T:Eisenstein}]

It was stated in \cite{A} that the series defined in \eqref{eq:Edef} converges absolutely and uniformly on compact subsets of $\mathfrak{H}\times(L\otimes_{\mathbb{Z}}\mathbb{C})$ for $k>\frac{\rk}{2}+2$. It was also shown that it is independent of the choice of coset representatives of $J_{\uL}(\mathbb{Z})_{\infty}\setminus J_{\uL(\mathbb{Z})}$ and it is invariant under the $|_{k,\uL}$ action of $J_{k,\uL}$. The fact that it is an element of $J_{k,\uL}$ follows from inspecting its Fourier expansion

We can compute the Petersson scalar product of $E_{r}$ and an arbitrary cusp form $\phi$ of weight $k$ and index $\uL$ in the same way as in the proof of Theorem \ref{T:Poincare}, item $(i)$, up until \eqref{eq:phiPpetF}. At this point, due to the orthogonality relations \eqref{eq:exporthL}, the integral in $x$ vanishes. This is due to the fact that $r'$ cannot be equal to $r$, since $\phi$ is a cusp form and therefore has no terms in its Fourier expansion with $\beta(r')$ in $\mathbb{Z}$.

Proceeding to $(ii)$, when we analyse the contribution coming from terms with $c=0$ in \eqref{eq:0contrib}, we set $r':=\lambda+r$ in \eqref{eq:0contrib} as before, which implies that we are summing over all $r'$ in $ L^\#$ such that $r'\equiv r\bmod L$. Since $D=0$ in this case, the contribution is
\begin{equation*}
 \sum_{\substack{r'\in L^\#\\ r'\equiv r\bmod L}}e\left(\beta(r')\tau\right)\left(e\left(\beta(r',z)\right)+(-1)^ke\left(\beta(-r',z)\right)\right).
\end{equation*}
Thus, we obtain the desired singular term in the Fourier expansion of $E_{r}$.

In the contribution coming from terms with $c\neq0$, the change arises in the Fourier coefficients \eqref{eq:Ffourier} of $\mathscr{F}_{c;(n,r)}$. They are now equal to
\begin{equation*}
\begin{split}
f(n',r')&=\det(\uL)^{-\frac{1}{2}}e_c\left(-\beta(r',r)\right)\int_{-\infty}^{\infty}\left(\frac{\tau}{i}\right)^{\frac{\rk}{2}}\tau^{-k}e\left(D'\tau\right)du.
\end{split}
\end{equation*}
If $D'\geq0$, then applying the same estimates as before with $D=0$ yields $f(n',r')=0$. When $D'<0$, we need to compute the integral
\begin{equation*}
I=\int_{-\infty+iv}^{\infty+iv}\left(\frac{\tau}{i}\right)^{\frac{\rk}{2}}\tau^{-k}e\left(D'\tau\right)d\tau.
\end{equation*}
Make the substitution $2\pi i D'\tau=z$ to obtain 
\begin{equation*}
I=i^{-\frac{\rk}{2}}(2\pi i D')^{k-\frac{\rk}{2}}\int_{C+i\infty}^{C-i\infty}z^{-\left(k-\frac{\rk}{2}\right)}e^zdz,
\end{equation*}
where $C:=-D'v$ is a positive constant. For fixed $\nu>0$, the functions 
\begin{equation*}
t\mapsto f(t)=t^{\nu-1}
\end{equation*}
and 
\begin{equation*}
s\mapsto F(s)=\Gamma(\nu)s^{-\nu}
\end{equation*}
are mutually inverse with respect to the Laplace transform. 
Taking $\nu=k-\frac{\rk}{2}$ yields
\begin{equation*}
I=i^{-\frac{\rk}{2}}(2\pi i D')^{k-\frac{\rk}{2}}\frac{(-2\pi i)f(1)}{\Gamma\left(k-\frac{\rk}{2}\right)}=\frac{(2\pi)^{k-\frac{\rk}{2}}i^{-k}}{\Gamma\left(k-\frac{\rk}{2}\right)}(-D')^{k-\frac{\rk}{2}-1}
\end{equation*}
and hence
\begin{equation*}
 f(n',r')=\det(\uL)^{-\frac{1}{2}}e_c\left(-\beta(r',r)\right)\frac{(2\pi)^{k-\frac{\rk}{2}}i^{-k}}{\Gamma\left(k-\frac{\rk}{2}\right)}(-D')^{k-\frac{\rk}{2}-1}.
\end{equation*}
Thus, we obtain the following contribution from the terms with $c>0$:
\begin{align*}
&\sum_{\substack{n'\in\mathbb{Z},r'\in L^\#\\\beta(r')<n'}}\sum_{c>0}c^{-k}\sum_{d(c)^{\times},\lambda(c)}e_c\left(\left(\beta(\lambda)+\beta(r,\lambda)+\beta(r)\right)d^{-1}-\beta(r',r)\right)\\
&\times\frac{(2\pi)^{k-\frac{\rk}{2}}i^{-k}}{\det(\uL)^{\frac{1}{2}} \Gamma\left(k-\frac{\rk}{2}\right)}(-D')^{k-\frac{\rk}{2}-1}e\left(n'\left(\tau+\frac{d}{c}\right)+\beta\left(r',z-\frac{\lambda}{c}\right)\right)\\
&=\sum_{(D',r')\in\text{supp}(\uL)}\frac{(2\pi)^{k-\frac{\rk}{2}}i^{k}}{\det(\uL)^{\frac{1}{2}}\Gamma\left(k-\frac{\rk}{2}\right)}(-D')^{k-\frac{\rk}{2}-1}\\
&\times\sum_{c>0}c^{-k}(-1)^kH_{\uL,c}(-r,D',r')e\left((\beta(r')-D')\tau+\beta(r',z)\right),
\end{align*} 
where
\begin{equation*}
\begin{split}
H_{\uL,c}(r,D',r'):=\sum_{\lambda(c),d(c)^\times}&e_c\left(\beta(\lambda+r)d^{-1}+(\beta(r\rq{})-D\rq{})d+\beta(r\rq{},\lambda+r)\right).
\end{split}
\end{equation*}
We remind the reader that $d$ runs through $\mathbb{Z}_c^{\times}$, $\lambda$ runs through a complete set of representatives of $L/cL$ and $d^{-1}$ denotes the inverse of $d$ modulo $c$.

Furthermore, when $c<0$, we obtain the same contribution, multiplied by $(-1)^k$ and with $z$ replaced by $(-z)$. Using the bi-linearity of $\beta$, we can again move the minus sign in front of $r'$ and relabel $r':=(-r')$ by abuse of notation, obtaining the desired result and completing the proof.

\end{proof}

\begin{remark}
For $k>\rk+2$, the ``non-singular'' Fourier coefficients of the Eisenstein series can be obtained from those of the Poincar\'e series by using \eqref{eq:Jbessel}. The $J$-Bessel function $J_{\alpha}$ has the following well-known asymptotic form for $0<x\ll (\alpha+1)^{\frac{1}{2}}$:
\begin{equation*}
J_{\alpha}(x)\sim\frac{1}{\Gamma(\alpha+1)}\left(\frac{x}{2}\right)^{\alpha}.
\end{equation*}
Therefore, if we view $D$ as a parameter in $\mathbb{R}$ and take the limit as $D\to0$ in \eqref{eq:GPoincare}, we obtain:
\begin{equation*}
\begin{split}
\lim_{D\to0}G_{D,r}(D',r')&=\frac{i^k(2\pi)^{k-\frac{\rk}{2}}(-D')^{k-\frac{\rk}{2}-1}}{\det(\uL)^{\frac{1}{2}}\Gamma\left(k-\frac{\rk}{2}\right)}\\
&\times\sum_{c\geq1}c^{-k}\left(H_{\uL,c}(0,r,D',r')+(-1)^kH_{\uL,c}(0,-r,D',r')\right)
\end{split}
\end{equation*}
and clearly $H_{\uL,c}(r,D',r')=H_{\uL,c}(0,r,D',r')$.
\end{remark}

\section{Trivial Eisenstein series}\label{S:E0}

In the following section we will show that, when $k$ is even, the Fourier coefficients of $E_{r}$ can be written as finite linear combinations of Fourier coefficients of the trivial Eisenstein series $E_{0}$ for every $r$ in $\iso$. In this section, we give an explicit formula for the latter:

\begin{theorem}\label{T:E0}
The Eisenstein series $E_{0}$ vanishes identically when $k$ is odd. When $k$ is even, it has the following Fourier expansion: 
\begin{equation*}
\begin{split}
E_{0}(\tau,z)&=\vartheta_{\uL,0}(\tau,z)+\sum_{\substack{(D,x)\in\text{supp}(\uL)\\ D<0}}G_{0}(D,x)e\left((\beta(x)-D)\tau+\beta(x,z)\right),
\end{split}
\end{equation*} 
where $\vartheta_{\uL,0}$ is a theta series as in \eqref{eq:thetadef}. When $\rk$ is even, write  $\Delta(\uL)=\mathfrak{f}_1\mathfrak{d}_1^2$, with $\mathfrak{f}_1$ the discriminant of the quadratic field $\mathbb{Q}(\sqrt{\Delta(\uL)})$ 
 and $\mathfrak{d}_1$ in $\mathbb{N}$. For each fixed pair $(D,x)$ in supp$(\uL)$, define $\tilde{D}:=N_{x}^2D$. Let $\tilde{L}_p$ be the local Euler factors defined in \eqref{eq:tildeL} and let $\chi_{\uL}(\cdot)$ denote the quadratic character $\chi_{\uL}(1,\cdot)$ defined in \eqref{eq:chiL}. For even $\rk$, we have:
\begin{equation*}
\begin{split}
G_{0}(D,x)=&
\frac{2(-1)^{\lceil\frac{\rk}{4}\rceil}(-D|\mathfrak{f}_1|)^{k-\frac{\rk}{2}-1}}{\mathfrak{d}_1 L\left(1-k+\frac{\rk}{2},\chi_{\mathfrak{f}_1}\right)\sum_{d\mid \mathfrak{d}_1}\mu(d)\chi_{\mathfrak{f}_1}(d)d^{\frac{\rk}{2}-k}\sigma_{1-2k+\rk}\left(\frac{\mathfrak{d}_1}{d}\right)}\\
&\times\prod_{p\mid 2\tilde{D}\det(\uL)}\frac{\tilde{L}_p(k-1)}{1-\chi_{\uL}(p)p^{\frac{\rk}{2}-k}}.
\end{split}
\end{equation*}
When $\rk$ is odd, write $D=D_0f^2$, with $D_0$ in $\mathbb{Q}_{\leq0}$ and $f$ in $\mathbb{N}$ such that $\gcd(f,2\det(\uL))=1$ and $\ord_{p}(D_0)\in\{0,1\}$ for all primes $p$ that are coprime to $2\det(\uL)$. Define $\tilde{D}_{0}:=D_0N_{x}^2$. Write $\tilde{D}_0\Delta(\uL)=\mathfrak{f}_2\mathfrak{d}_2^2$, with $\mathfrak{f}_2$ the discriminant of the quadratic field $\mathbb{Q}(\sqrt{\tilde{D}_0\Delta(\uL)})$ and $\mathfrak{d}_2$ in $\mathbb{N}$. 
For odd $\rk$, we have:
\begin{equation*}
\begin{split}
G_{0}(D,x)&=\frac{2^{2k-\rk}\left(k-\lceil\frac{\rk}{2}\rceil\right)(D\tilde{D}_0)^{\frac{1}{2}}(-D)^{k-\lceil\frac{\rk}{2}\rceil-1}}{(-1)^{\lceil\frac{\rk}{2}\rceil+\lfloor\frac{\rk}{4}\rfloor}B_{2k-\rk-1}\mathfrak{d}_2|\mathfrak{f}_2|^{k-\lceil\frac{\rk}{2}\rceil}}L\left(1-k+\lceil\frac{\rk}{2}\rceil,\chi_{\mathfrak{f}_2}\right)\\
&\times \sum_{d\mid \mathfrak{d}_2}\mu(d)\chi_{\mathfrak{f}_2}(d)d^{\lceil\frac{\rk}{2}\rceil-k}\sigma_{2-2k+\rk}\left(\frac{\mathfrak{d}_2}{d}\right)\prod_{p\mid 2\tilde{D}\det(\uL)}\frac{1-\chi_{\uL}(\tilde{D}_0,p)p^{\lceil\frac{\rk}{2}\rceil-k}}{1-p^{1-2k+\rk}}\tilde{L}_p(k-1).
\end{split}
\end{equation*}
\end{theorem}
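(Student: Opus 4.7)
The vanishing of $E_0$ for odd $k$ is immediate from \eqref{eq:Er-r} applied at $r=0$: it gives $E_0=(-1)^k E_0$, forcing $E_0\equiv 0$. Assume from now on that $k$ is even. Specializing Theorem \ref{T:Eisenstein}(ii) to $r=0$, both $\delta(0,r')$ and $\delta(-0,r')$ equal the indicator of $r'\in L$ and $(-1)^k=1$, so the singular part collapses to $\sum_{r'\in L}e(\tau\beta(r')+\beta(r',z))=\vartheta_{\uL,0}(\tau,z)$, using that $\beta(r')\in\mathbb{Z}$ whenever $r'\in L$ since $\uL$ is even. The remaining task is to evaluate
\begin{equation*}
G_{0}(D,x) = \frac{(2\pi)^{k-\rk/2}\,i^{k}}{\det(\uL)^{1/2}\,\Gamma(k-\rk/2)}(-D)^{k-\rk/2-1}\sum_{c\geq 1}c^{-k}H_{\uL,c}(0,D,x)
\end{equation*}
and match the answer to the formulas displayed in the theorem.

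The plan is to recognise $\mathcal{D}(s;D,x):=\sum_{c\geq 1}c^{-s}H_{\uL,c}(0,D,x)$ as a Dirichlet $L$-series attached to representation numbers of the shifted lattice $\uL+x$. Completing the square in the exponent of \eqref{eq:Heis} via the substitution $\lambda\mapsto\lambda-dx$ (combined with the observation that $N_x\, x \in L$) rewrites the inner sum as a quadratic Gauss sum over $L/cL$ and leaves an exponential in $-Dd$ for the outer sum over $(\mathbb{Z}/c)^\times$. A Chinese-remainder decomposition then yields an Euler product $\mathcal D(s;D,x)=\prod_p \mathcal D_p(s;D,x)$. At primes $p\nmid 2\det(\uL)\tilde D$ the local factor $\mathcal D_p$ coincides with the standard local density for representing $\tilde D$ by the coset $\uL+x$, and can be evaluated in closed form by the Bruinier--Kuss formulae for $L$-series of representation numbers of quadratic forms; at the ramified primes the local contributions are packaged into the Euler factors $\tilde L_p(k-1)$ of \eqref{eq:tildeL}.

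The closed-form expressions are then obtained by explicitly assembling the good-prime Euler product, separating by the parity of $\rk$. For even $\rk$, $\Delta(\uL)$ is itself a discriminant; writing $\Delta(\uL)=\mathfrak{f}_1\mathfrak{d}_1^2$ with $\mathfrak{f}_1$ fundamental, the character $\chi_{\uL}$ factors through $\chi_{\mathfrak{f}_1}$, and the good-prime product collapses into $L(k-\rk/2,\chi_{\mathfrak{f}_1})/\zeta(2k-\rk)$ decorated by the M\"obius-twisted divisor sum over $d\mid\mathfrak{d}_1$. The functional equation of $L(s,\chi_{\mathfrak{f}_1})$ together with the evaluation $L(1-n,\chi)=-B_{n,\chi}/n$ recasts this quotient into the shape stated. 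For odd $\rk$, one first extracts the square part $f^2$ of $D$ coprime to $2\det(\uL)$, so that $\tilde D_0\Delta(\uL)=\mathfrak{f}_2\mathfrak{d}_2^2$ becomes the relevant discriminant; the character is now $\chi_{\mathfrak{f}_2}$, and the role of $\zeta(2k-\rk)$ is played by $\zeta(2-2k+\rk)$, whose negative-integer value produces the Bernoulli number $B_{2k-\rk-1}$ in the denominator. The main obstacle is the careful bookkeeping at ramified primes: one must verify that the irregular local densities there collapse \emph{exactly} into $\tilde L_p(k-1)$ with no residual correction factors, and that the sign and $\Gamma$-factors from the prefactor $(2\pi)^{k-\rk/2}i^k/(\det(\uL)^{1/2}\Gamma(k-\rk/2))$ and from the functional equation of $L(s,\chi_{\mathfrak{f}_j})$ combine into the precise signs $(-1)^{\lceil\rk/4\rceil}$ and $(-1)^{\lceil\rk/2\rceil+\lfloor\rk/4\rfloor}$ appearing in the statement.
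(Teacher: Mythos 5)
Your overall strategy --- convert $\sum_{c}c^{-k}H_{\uL,c}(0,D,x)$ into a Dirichlet series of representation numbers, Euler-factorize it, evaluate the unramified factors by Siegel's local densities, keep the ramified ones as $\tilde{L}_p(k-1)$, and finish with the functional equation of $L(s,\chi_{\mathfrak{f}_j})$ --- is exactly the paper's route (Lemma \ref{L:G0simplified} followed by the proof of Theorem \ref{T:E0}), and your treatment of the odd-$k$ vanishing and of the singular term is correct. However, two steps as you describe them do not go through. First, the substitution $\lambda\mapsto\lambda-dx$ is not a well-defined change of variable on $L/cL$: here $x\in L^{\#}/L$, so $dx$ need not lie in $L$, and $N_x x\in L$ does not repair this for general $d$ modulo $c$. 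The move that works (and that the paper makes) is $\lambda\mapsto d\lambda$, a bijection of $L/cL$ since $\gcd(d,c)=1$; it turns the exponent of \eqref{eq:Heis} with $r=0$ directly into $e_c\bigl(d(\beta(\lambda+x)-D)\bigr)$, after which M\"obius inversion over the coprimality condition gives $\sum_{c}c^{-k}H_{\uL,c}(0,D,x)=\zeta(k-\rk)^{-1}\sum_{b}R_b(Q_{D,x})b^{1-k}$ with no Gauss sums at all. This is also precisely where $r\in\iso$ (here $r=0$) is used; the identity fails for the general sums $H_{\uL,c}(D,r,D',r')$.

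Second, your even-rank Euler product is assembled the wrong way round. By Siegel's Hilfssatz $16$ the unramified local factors of $\tilde{L}(s)=\sum_b R_b b^{-s}$ for even $\rk$ carry $\bigl(1-\chi_{\uL}(p)p^{-(s-\frac{\rk}{2}+1)}\bigr)$ in the numerator, so that $\tilde{L}(s)=\zeta(s-\rk+1)\,L\bigl(s-\tfrac{\rk}{2}+1,\chi_{\uL}\bigr)^{-1}\prod_{p\mid 2\tilde{D}\det(\uL)}(\cdots)$: the Dirichlet $L$-function sits in the \emph{denominator}, and no factor $\zeta(2k-\rk)$ appears. The shape $L(\cdot,\chi)/\zeta(2s-\rk+1)$ that you propose for even rank is the odd-rank structure. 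As written, your computation would place $L\bigl(1-k+\tfrac{\rk}{2},\chi_{\mathfrak{f}_1}\bigr)$ in the numerator of $G_0(D,x)$ after the functional equation, contradicting the stated formula. Relatedly, for odd $\rk$ the zeta value producing $B_{2k-\rk-1}$ is $\zeta(2s-\rk+1)\big|_{s=k-1}=\zeta(2k-\rk-1)$, evaluated at a positive even integer, rather than $\zeta(2-2k+\rk)$. The remaining bookkeeping you outline (Zagier's reduction of $\chi_{\uL}$ to the primitive character $\chi_{\mathfrak{f}_j}$, the Gauss sum, and the duplication/reflection formulas for $\Gamma$) does match the paper once these two points are corrected.
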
 

We will use the following lemma in the proof of Theorem \ref{T:E0}:

\begin{lemma}\label{L:G0simplified}
The non-singular Fourier coefficients of $E_0$ are equal to
\begin{equation}\label{eq:G0simplified}
\begin{split}
G_{0}(D,x)&=\frac{(2\pi)^{k-\frac{\rk}{2}}i^{k}(-D)^{k-\frac{\rk}{2}-1}}{2\det(\uL)^{-\frac{1}{2}}\Gamma\left(k-\frac{\rk}{2}\right)\zeta(k-\rk)}\sum_{b\geq1}\left(1+(-1)^k\right)\frac{R_b(Q_{D,x})}{b^{k-1}}.
\end{split}
\end{equation}
\end{lemma}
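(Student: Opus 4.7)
The plan is to specialise Theorem~\ref{T:Eisenstein}(ii) to $r=0$ and reduce the lemma to a single Dirichlet-series identity. Since $H_{\uL,c}(0,D,x)=H_{\uL,c}(-0,D,x)$, the two lattice-sum contributions in \eqref{eq:GEisenstein} collapse into a factor of $1+(-1)^k$, giving
\begin{equation*}
G_{0}(D,x)=\frac{(2\pi)^{k-\frac{\rk}{2}}i^{k}(-D)^{k-\frac{\rk}{2}-1}}{2\det(\uL)^{\frac{1}{2}}\Gamma\!\left(k-\frac{\rk}{2}\right)}\bigl(1+(-1)^k\bigr)\sum_{c\geq 1}\frac{H_{\uL,c}(0,D,x)}{c^{k}}.
\end{equation*}
Comparing with \eqref{eq:G0simplified}, the lemma is therefore equivalent to the identity
\begin{equation*}
\sum_{c\geq 1}\frac{H_{\uL,c}(0,D,x)}{c^{k}}=\frac{\det(\uL)}{\zeta(k-\rk)}\sum_{b\geq 1}\frac{R_b(Q_{D,x})}{b^{k-1}},
\end{equation*}
and I would devote the rest of the proof to establishing this.

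To evaluate the left-hand side, I would first simplify $H_{\uL,c}(0,D,x)$ by completing the square on $\lambda$: using the congruence $d^{-1}\beta(\lambda)+\beta(\lambda,x)\equiv d^{-1}\beta(\lambda+dx)-d\beta(x)\pmod{c}$, the contribution $e_c((\beta(x)-D)d)$ combines with $e_c(-d\beta(x))$ to produce $e_c(-dD)$, yielding
\begin{equation*}
H_{\uL,c}(0,D,x)=\sum_{d\,(c)^{\times}}e_c(-dD)\sum_{\lambda\,(c)}e_c\bigl(d^{-1}\beta(\lambda+dx)\bigr).
\end{equation*}
Next, I would substitute $\mu=\lambda+dx$ so that $\mu$ ranges over a coset of $cL$ in $L^{\#}$, and evaluate the inner Gaussian sum locally via the Chinese Remainder Theorem and the classical theory of quadratic Gauss sums over $\mathbb{Z}/p^a\mathbb{Z}$. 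This step should produce an overall factor of $\det(\uL)^{1/2}$ together with a local count of vectors $\mu\in L+x\subset L^{\#}$ satisfying $\beta(\mu)\equiv -D\pmod{c}$, i.e.\ a piece of $R_c(Q_{D,x})$. Finally I would swap the order of summation over $c$ and over such $\mu$, and apply the standard Ramanujan-sum identity
\begin{equation*}
\sum_{c\geq 1}\frac{c_c(m)}{c^{s}}=\frac{\sigma_{1-s}(m)}{\zeta(s)}\qquad (m\geq 1)
\end{equation*}
at $s=k-\rk$ to produce the factor $\zeta(k-\rk)^{-1}$ and collapse the inner $c$-sum into the announced Dirichlet series over representation numbers.

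The main obstacle will be the local Gauss-sum computation in the second step: the shift $\mu=\lambda+dx$ lands in a coset of $L$ in $L^{\#}$ whose representative depends on $d\bmod N_{x}$, so one has to carefully track this $d$-dependence together with the fractional part $d^{-1}(d^{2}\beta(x))$ through the local evaluations, and then match the resulting combinatorics to the natural definition of $R_{b}(Q_{D,x})$ so that the local-density computations of Bruinier--Kuss in \cite{BK} can be quoted directly. Once that matching is made, the factor of $\det(\uL)$ on the right-hand side of the central identity combines cleanly with the $\det(\uL)^{1/2}$ in Theorem~\ref{T:Eisenstein}(ii) to give the $\det(\uL)^{-1/2}$ appearing in the denominator of \eqref{eq:G0simplified}, completing the proof.
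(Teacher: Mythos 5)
Your reduction of the lemma to a Dirichlet-series identity is the right first move (and matches the paper's), but the identity you write down is wrong: the correct statement, which is what the paper proves, is
\begin{equation*}
\sum_{c\geq 1}\frac{H_{\uL,c}(0,D,x)}{c^{k}}=\frac{1}{\zeta(k-\rk)}\sum_{b\geq 1}\frac{R_b(Q_{D,x})}{b^{k-1}},
\end{equation*}
with \emph{no} factor of $\det(\uL)$. (Sanity check: as $k\to\infty$ both sides tend to $1$, whereas your version would force a limit of $\det(\uL)$ on the right.) You introduced the spurious $\det(\uL)$ in order to match the exponent $-\tfrac12$ on $\det(\uL)$ in \eqref{eq:G0simplified}; that exponent is inconsistent with direct substitution of $r=0$ into \eqref{eq:GEisenstein} and with how the lemma is used in the proof of Theorem \ref{T:E0} (where $\det(\uL)^{1/2}$ sits in the denominator), so it should be read as $+\tfrac12$ rather than compensated for elsewhere.

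The second, more substantive problem is the proposed proof of the identity. After completing the square, you want to evaluate $\sum_{\lambda(c)}e_c\bigl(d^{-1}\beta(\lambda+dx)\bigr)$ for each fixed invertible $d$ as a local quadratic Gauss sum and read off ``$\det(\uL)^{1/2}$ times a piece of $R_c(Q_{D,x})$.'' That is not what a Gauss sum gives: for fixed $d$ it evaluates to roughly $c^{\rk/2}$ times a root of unity and a Jacobi-symbol character in $d$ (this is the route to the explicit local Euler factors of Theorem \ref{T:E0} via Siegel, not to the representation numbers). The counts $R_b(Q_{D,x})$ arise only from orthogonality, i.e.\ from averaging $e_b(dQ_{D,x}(\lambda))$ over \emph{all} $d\bmod b$, including non-invertible $d$. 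The paper's argument does exactly this: substitute $\lambda\mapsto d\lambda$ to get $H_{\uL,c}(0,D,x)=\sum_{\lambda(c)}\sum_{d(c)^{\times}}e_c\bigl(dQ_{D,x}(\lambda)\bigr)$, remove the coprimality condition on $d$ with M\"obius inversion (writing $c=ab$, the $\lambda$-sum over $L/abL$ of a function of $\lambda\bmod bL$ contributes $a^{\rk}$, whence the factor $\sum_a\mu(a)a^{\rk-k}=1/\zeta(k-\rk)$), and identify $\tfrac1b\sum_{d(b)}\sum_{\lambda(b)}e_b(dQ_{D,x}(\lambda))=R_b(Q_{D,x})$. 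Your closing appeal to the Ramanujan-sum identity $\sum_c c_c(m)c^{-s}=\sigma_{1-s}(m)/\zeta(s)$ also does not apply as stated, because the Ramanujan sums here occur with arguments $Q_{D,x}(\lambda)$ for $\lambda$ ranging over $L/cL$, a range that itself depends on $c$; handling that dependence is precisely what the M\"obius step is for.
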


\begin{proof}
On the right-hand side of \eqref{eq:Heis}, set $\lambda':=d^{-1}\lambda$. Change the notation of the pair $(D',r')$ to $(D,x)$ and drop the prime from the new summation over $\lambda'$. We obtain:
\begin{equation*}
\begin{split}
H_{\uL,c}(r,D,x)&=
\sum_{\lambda(c)}\sum_{d(c)^{\times}}e_c\left(d\left(\beta(\lambda+d^{-1}r+x)-D\right)\right).
\end{split}
\end{equation*}
We have used the fact that $r\in\iso$ to write $e_c(\beta(\lambda+r))=e_c(\beta(\lambda+d^{-1}dr))$ and $e_c(\beta(\lambda,r))=e_c(\beta(\lambda,d^{-1}dr))$ (note that this would not work for the lattice sum $H_{\uL,c}(D,r,D',r')$). Suppose that $r=0$ in $\iso$ and set $Q_{D,x}(\lambda):=\beta(\lambda+x)-D$. Note that this quantity is an integer. Define the following representation numbers:
\begin{equation*}
R_c(Q_{D,x}):=\#\{\lambda\in L/cL:Q_{D,x}(\lambda)\equiv0\bmod c\}.
\end{equation*} 
We want to use the following identity, which is easy to verify:
\begin{equation*}
R_c(Q_{D,x})=\frac{1}{c}\sum_{\lambda\in L/cL}\sum_{d\in\mathbb{Z}_c}e_c(dQ_{D,x}(\lambda)).
\end{equation*}
To remove the coprimality condition between $d$ and $c$, use the following well-known identity involving the M\"obius function:
\begin{equation*}
\sum_{a\mid n}\mu(a)=\begin{cases}
1, & \text{if }n=1\text{ and }\\
0, & \text{otherwise.} 
\end{cases}
\end{equation*}
Define the following quantity:
\begin{equation*}
\overline{d}=\begin{cases}
d^{-1}\bmod c, & \text{if }\gcd(d,c)=1\text{ and }\\
0, & \text{otherwise.}
\end{cases}
\end{equation*}
We obtain:
\begin{equation*}
H_{\uL,c}(r,D,x)=\sum_{\lambda(c)}\sum_{d(c)}\sum_{a\mid(d,c)}\mu(a)e_c\left(d\left(\beta(\lambda+\overline{d}r+x)-D\right)\right)
\end{equation*}
and, writing $c=ab$, we have:
\begin{equation*}
\begin{split}
\sum_{c\geq1}c^{-k}H_{\uL,c}(r,D,x)&=\sum_{a\geq1}\sum_{b\geq1}\frac{\mu(a)}{(ab)^k}\sum_{\substack{d(ab)\\a\mid d}}\sum_{\lambda(ab)}e_{ab}\left(d\left(\beta(\lambda+\overline{d}r+x)-D\right)\right).
\end{split}
\end{equation*}
The condition that $d$ runs modulo $ab$ and $d$ is divisible by $a$ is equivalent to $\frac{d}{a}$ running modulo $b$.
Since $e_{b}\left(\frac{d}{a}\left(\beta(\lambda+\overline{d}r+x)-D\right)\right)$ only depends on $\lambda$ modulo $b$, we can rewrite the inner sum above as
\begin{equation*}
\sum_{\lambda\in (ab)}e_{b}\left(\frac{d}{a}\left(\beta(\lambda+\overline{d}r+x)-D\right)\right)=a^{\rk}\sum_{\lambda (b)}e_{b}\left(\frac{d}{a}\left(\beta(\lambda+\overline{d}r+x)-D\right)\right).
\end{equation*}
Combining everything with the following well-known identity involving the Riemann zeta function:
\begin{equation*}
\frac{1}{\zeta(s)}=\sum_{n\geq1}\frac{\mu(n)}{n^s},
\end{equation*}
we obtain:
\begin{equation}\label{eq:LfnKl}
\begin{split}
\sum_{c\geq1}c^{-k}H_{\uL,c}(0,D,x)&=\sum_{a\geq1}\frac{\mu(a)}{a^{k-\rk}}\sum_{b\geq1}\frac{1}{b^{k-1}}\cdot \frac{1}{b}\sum_{d(b)}\sum_{\lambda(b)}e_{b}\left(dQ_{D,x}(\lambda)\right)\\
&=\frac{1}{\zeta(k-\rk)}\sum_{b\geq1}\frac{R_b(Q_{D,x})}{b^{k-1}},
\end{split}
\end{equation}
where in the first line we replaced $\frac{d}{a}$ by $d$. 
Therefore, equation $\eqref{eq:GEisenstein}$ with $r=0$ yields the desired result.
\end{proof}

We proceed with the proof of Theorem \ref{T:E0}:

\begin{proof}[Proof of Theorem \ref{T:E0}]

For every $r$ in $\iso$, the singular term of $E_{r}$ can be written as
\begin{equation*}
 C_0(E_{r})(\tau,z)=\frac{1}{2}\left(\vartheta_{\uL,r}+(-1)^k\vartheta_{\uL,-r}\right)(\tau,z).
\end{equation*}
When $r=0$, this quantity is equal to zero if $k$ is odd and to $\vartheta_{\uL,0}$ if $k$ is even. Lemma \ref{L:G0simplified} implies that the Fourier coefficients $G_{0}(D,x)$ vanish if $k$ is odd, in support of \eqref{eq:Er-r}. Thus, from now on, assume that $k$ is even. In the remainder of this proof, we want to bring \eqref{eq:G0simplified} to the desired forms. 

The representation numbers $R_b:=R_b(Q_{D,x})$ are multiplicative functions of $b$. They also arise in $\S4$ of \cite{BrKu} in the context of vector-valued Eisenstein series ($R_b(Q_{D,x})=N_{x,-D}(b)$ with the notation in \cite{BrKu}). Define the Dirichlet series
\begin{equation*}
\tilde{L}(s):=\sum_{b\geq1}\frac{R_b}{b^s}.
\end{equation*}
It was shown in \cite{BrKu} that $\tilde{L}(s)$ converges for $\Re(s)>\rk$ and that it can be continued meromorphically to $\Re(s)>\frac{\rk}{2}+1$, with a simple pole at $s=\rk$, in view of \eqref{eq:LfnKl}. Thus, $G_{0}(D,x)$ is the value of the analytic continuation of 
\begin{equation*}
\frac{(2\pi)^{k-\frac{\rk}{2}}i^{k}(-D)^{k-\frac{\rk}{2}-1}\tilde{L}(s)}{\det(\uL)^{\frac{1}{2}}\Gamma\left(k-\frac{\rk}{2}\right)\zeta(s-\rk+1)}
\end{equation*}
at $s=k-1$. By using results of Siegel on representation numbers of quadratic forms modulo prime powers \cite{Si}, it is possible to compute $\tilde{L}$. For each prime $p$, define $w_p:=1+2\ord_p(2N_{x}D)$ and the local Euler factor
\begin{equation}\label{eq:tildeL}
\tilde{L}_p(s):=p^{-w_ps}R_{p^{w_p}}+\left(1-p^{-(s-\rk+1)}\right)\sum_{l=0}^{w_p-1}p^{-ls}R_{p^l}.
\end{equation} 
Then Lemma $5$ in \cite{BrKu} implies that
\begin{equation*}
\tilde{L}(s)=\zeta(s-\rk+1)\prod_{p\text{ prime}}\tilde{L}_p(s).
\end{equation*}
Define $\tilde{D}:=DN_x^2$, which is a negative integer since $D\equiv\beta(x)\bmod \mathbb{Z}$ and $N_xx\in L$. If the rank of $\uL$ is even, then $\Delta(\uL)$ is a discriminant, i.e. it is congruent to $0$ or $1$ modulo $4$, and hence $\chi_{\uL}(\cdot)=\left(\frac{\Delta(\uL)}{\cdot}\right)$ is a quadratic character of modulus $|\Delta(\uL)|$. Define $\tilde{D}_0$ in the following way:  let $D=D_0f^2$, with $D_0$ in $\mathbb{Q}_{\leq0}$ and $f$ in $\mathbb{N}$ such that $\gcd(f,2\det(\uL))=1$ and $\ord_{p}(-D_0)\in\{0,1\}$ for all primes $p$ that are coprime to $2\det(\uL)$; let $\tilde{D}_{0}:=D_0N_{x}^2$ (which is also a negative integer). If the rank of $\uL$ is odd, then $\Delta(\uL)$ is congruent to $0$ modulo $4$ (see Remark $14.3.23$ in $\S14.3$ of \cite{CS}) and hence $\chi_{\uL}(\tilde{D}_0,\cdot)=\left(\frac{\tilde{D}_0\Delta(\uL)}{\cdot}\right)$ is a quadratic character of modulus $|\tilde{D}_0\Delta(\uL)|$. The local Euler factors $\tilde{L}_p$ can be computed at all primes except for a set of ``bad primes'', by using Hilfssatz $16$ in \cite{Si}, giving rise to the following formulas:
\begin{equation*}
\tilde{L}(s)=\frac{\zeta(s-\rk+1)}{L\left(s-\frac{\rk}{2}+1,\chi_{\uL}(1,\cdot)\right)}\prod_{p\mid 2\tilde{D}\det(\uL)}\frac{\tilde{L}_p(s)}{1-\chi_{\uL}(1,p)p^{-(s-\frac{\rk}{2}+1)}},
\end{equation*}
if $\rk$ is even and
\begin{equation*}
\begin{split}
\tilde{L}(s)&=\frac{\zeta(s-\rk+1)L\left(s-\lfloor\frac{\rk}{2}\rfloor,\chi_{\uL}(\tilde{D}_0,\cdot)\right)}{\zeta(2s-\rk+1)}\prod_{p\mid 2\tilde{D}\det(\uL)}\frac{1-\chi_{\uL}(\tilde{D}_0,p)p^{-\left(s-\lfloor\frac{\rk}{2}\rfloor\right)}}{1-p^{-(2s-\rk+1)}}\tilde{L}_p(s),
\end{split}
\end{equation*}
if $\rk$ is odd.

Assume first that $\rk$ is even. Write $\Delta(\uL)=\mathfrak{f}\mathfrak{d}^2$ with $\mathfrak{f}$ the discriminant of the quadratic field $\mathbb{Q}(\sqrt{\Delta(\uL)})$ and $\mathfrak{d}$ in $\mathbb{N}$. In particular, the function $\chi_{\mathfrak{f}}(\cdot)$ is a primitive quadratic character modulo $|\mathfrak{f}|$. It was shown in $\S4$ of \cite{Za} that the Dirichlet $L$-function of $\chi_{\uL}$ satisfies the following:
\begin{equation*}
L(s,\chi_{\uL})=L(s,\chi_{\mathfrak{f}})\sum_{d\mid \mathfrak{d}}\mu(d)\chi_{\mathfrak{f}}(d)d^{-s}\sigma_{1-2s}\left(\frac{\mathfrak{d}}{d}\right).
\end{equation*}
Write \eqref{eq:G0simplified} as $G_{0}(D,x)=A\cdot B$, with
\begin{equation*}
A:=\frac{2^{k-\frac{\rk}{2}}i^{k}(-D)^{k-\frac{\rk}{2}-1}}{\sum_{d\mid \mathfrak{d}}\mu(d)\chi_{\mathfrak{f}}(d)d^{\frac{\rk}{2}-k}\sigma_{1-2k+\rk}\left(\frac{\mathfrak{d}}{d}\right)}\prod_{p\mid 2\tilde{D}\det(\uL)}\frac{\tilde{L}_p(k-1)}{1-\chi_{\uL}(p)p^{-(k-\frac{\rk}{2})}}
\end{equation*} 
and 
\begin{equation*}
B:=\frac{\pi^{k-\frac{\rk}{2}}}{\det(\uL)^{\frac{1}{2}}\Gamma\left(k-\frac{\rk}{2}\right)L\left(k-\frac{\rk}{2},\chi_{\mathfrak{f}}\right)}.
\end{equation*}
Since $k$ and $\rk$ are even, it follows that $A$ is a rational number. We can rewrite the expression for $B$ using functional equations for Dirichlet $L$-series (see $\S3.4.3$ of \cite{CS}): assume that $\chi$ is a primitive character modulo $N$ and define the Gauss sum $G(\chi):=\sum_{n=1}^N\chi(n)e_N(n)$ and the quantity
\begin{equation*}
a:=\begin{cases}
0, & \text{if }\chi(-1)=1\text{ and }\\
1, & \text{if }\chi(-1)=-1.
\end{cases}
\end{equation*}
Define the completed $L$-function of $\chi$,
\begin{equation*}
\Lambda(s,\chi):=\left(\frac{N}{\pi}\right)^{\frac{s+a}{2}}\Gamma\left(\frac{s+a}{2}\right)L(s,\chi).
\end{equation*}
The following holds:
\begin{equation}\label{eq:dirfneq}
\Lambda(1-s,\chi)=\frac{G(\chi)}{((-1)^aN)^{\frac{1}{2}}}\Lambda(s,\overline{\chi}),
\end{equation}
Applying this functional equation, we obtain:
\begin{equation*}
\begin{split}
B=\frac{i^{a}\pi^{\frac{1}{2}}|\mathfrak{f}|^{k-\frac{\rk}{2}}}{\det(\uL)^{\frac{1}{2}}G(\chi_{\mathfrak{f}})L\left(1-k+\frac{\rk}{2},\chi_{\mathfrak{f}}\right)}\times\frac{\Gamma\left(\frac{k}{2}-\frac{\rk}{4}+\frac{a}{2}\right)}{\Gamma\left(k-\frac{\rk}{2}\right)\Gamma\left(\frac{1+a}{2}-\frac{k}{2}+\frac{\rk}{4}\right)}.
\end{split}
\end{equation*}
We remind the reader that $\Delta(\uL)=(-1)^{\frac{\rk}{2}}\det(\uL)$ and, since $\Delta(\uL)=\mathfrak{f}\mathfrak{d}^2$, we have $\mathfrak{f}>0$ if $\rk\equiv0\bmod4$ and $\mathfrak{f}<0$ if $\rk\equiv2\bmod4$. The Kronecker symbol $\left(\frac{n}{-1}\right)$ is equal to $\text{sign}(n)$ and so $a=0$ if $\rk\equiv0\bmod4$ and $a=1$ if $\rk\equiv2\bmod4$. The Gauss sum $G(\chi_{\mathfrak{f}})$ is equal to $\mathfrak{f}^{\frac{1}{2}}$, since $\mathfrak{f}$ is a fundamental discriminant (see $\S3.4.2$ of \cite{CS}), and $\det(\uL)^{\frac{1}{2}}\mathfrak{f}^{\frac{1}{2}}=((-1)^{\frac{\rk}{2}}\mathfrak{f}^2\mathfrak{d}^2)^{\frac{1}{2}}=i^a|\mathfrak{f}|\mathfrak{d}$. Using the duplication formula and Euler's reflection formula for the Gamma function and the fact that $\sin\left(\pi\left(n+\frac{1}{2}\right)\right)=(-1)^{n}$, the Gamma functions give a contribution of
\begin{equation*}
2^{1-\left(k-\frac{\rk}{2}\right)}\pi^{-\frac{1}{2}}(-1)^{\frac{k}{2}-\lceil\frac{\rk}{4}\rceil}.
\end{equation*}
It follows that
\begin{equation*}
B=\frac{2^{1-\left(k-\frac{\rk}{2}\right)}(-1)^{\frac{k}{2}-\lceil\frac{\rk}{4}\rceil}|\mathfrak{f}|^{k-\frac{\rk}{2}-1}}{\mathfrak{d} L\left(1-k+\frac{\rk}{2},\chi_{\mathfrak{f}}\right)}
\end{equation*}
and hence we obtain the desired formula for $G_{0}(D,x)$ when $\rk$ is even.

Assume now that $\rk$ is odd. Write $\tilde{D}_0\Delta(\uL)=\mathfrak{f}\mathfrak{d}^2$ with $\mathfrak{f}$ the discriminant of the quadratic field $\mathbb{Q}(\sqrt{\tilde{D}_0\Delta(\uL)})$ and $\mathfrak{d}$ in $\mathbb{N}$. The function $\chi_{\mathfrak{f}}(\cdot)=\left(\frac{\mathfrak{f}}{\cdot}\right)$ is a primitive quadratic character modulo $|\mathfrak{f}|$. As before, the $L$-function of $\chi_{\uL}(\tilde{D}_0,\cdot)$ satisfies the following:
\begin{equation*}
L(s,\chi_{\uL}(\tilde{D}_0,\cdot))=L(s,\chi_{\mathfrak{f}})\sum_{d\mid \mathfrak{d}}\mu(d)\chi_{\mathfrak{f}}(d)d^{-s}\sigma_{1-2s}\left(\frac{\mathfrak{d}}{d}\right).
\end{equation*}
The values of the Riemann zeta function at positive even integers are well known:
\begin{equation*}
\zeta(2k-\rk-1)=\frac{(-1)^{k-\frac{\rk-1}{2}}B_{2k-\rk-1}(2\pi)^{2k-\rk-1}}{2\Gamma(2k-\rk)}.
\end{equation*}
Write \eqref{eq:G0simplified} as $G_{0}(D,x)=A\cdot B$, with
\begin{equation*}
\begin{split}
A&:=\frac{2i^k(-D)^{k-\lceil\frac{\rk}{2}\rceil-1}\sum_{d\mid \mathfrak{d}}\mu(d)\chi_{\mathfrak{f}}(d)d^{\lceil\frac{\rk}{2}\rceil-k}\sigma_{2-2k+\rk}\left(\frac{\mathfrak{d}}{d}\right)}{(-1)^{k-\frac{\rk-1}{2}}B_{2k-\rk-1}}\\
&\times\prod_{p\mid 2\tilde{D}\det(\uL)}\frac{1-\chi_{\uL}(\tilde{D}_0,p)p^{-\left(k-\lceil\frac{\rk}{2}\rceil\right)}}{1-p^{-(2k-\rk-1)}}\tilde{L}_p(k-1)
\end{split}
\end{equation*}
and note that $A$ is a rational number. Applying the functional equation \eqref{eq:dirfneq} to $\chi_\mathfrak{f}$ yields
\begin{equation*}
\begin{split}
B&=\frac{(2\pi)^{k-\frac{\rk}{2}}(-D)^{\frac{1}{2}}\Gamma\left(2k-\rk\right)L\left(k-\lceil\frac{\rk}{2}\rceil,\chi_{\mathfrak{f}}\right)}{\det(\uL)^{\frac{1}{2}}\Gamma\left(k-\frac{\rk}{2}\right)(2\pi)^{2k-\rk-1}}\\
&=\frac{(-D)^{\frac{1}{2}}G(\chi_{\mathfrak{f}})L\left(1-k+\lceil\frac{\rk}{2}\rceil,\chi_{\mathfrak{f}}\right)}{i^a\det(\uL)^{\frac{1}{2}}2^{k-\frac{\rk}{2}-1}|\mathfrak{f}|^{k-\lceil\frac{\rk}{2}\rceil}}\times\frac{\Gamma\left(\frac{1+a}{2}-\frac{k}{2}+\frac{\rk+1}{4}\right)\Gamma\left(2k-\rk\right)}{\Gamma\left(k-\frac{\rk}{2}\right)\Gamma\left(\frac{k}{2}-\frac{\rk+1}{4}+\frac{a}{2}\right)}.
\end{split}
\end{equation*}
We remind the reader that $\Delta(\uL)=(-1)^{\lfloor\frac{\rk}{2}\rfloor}2\det(\uL)$ and, since $\tilde{D}_0\Delta(\uL)=\mathfrak{f}\mathfrak{d}^2$ and $\tilde{D}_0<0$, we have $\mathfrak{f}>0$ if $\rk\equiv3\bmod4$ and $\mathfrak{f}<0$ if $\rk\equiv1\bmod4$. It follows that $a=0$ if $\rk\equiv3\bmod4$ and $a=1$ if $\rk\equiv1\bmod4$. The Gauss sum $G(\chi_{\mathfrak{f}})$ is equal to $\mathfrak{f}^{\frac{1}{2}}$ as before and $\mathfrak{f}^{\frac{1}{2}}/\det(\uL)^{\frac{1}{2}}=((-1)^{\lfloor\frac{\rk}{2}\rfloor}2\tilde{D}_0/\mathfrak{d}^2)^{\frac{1}{2}}=i^a(-\tilde{D}_0)^{\frac{1}{2}}2^{\frac{1}{2}}/\mathfrak{d}$. Using the duplication formula and Euler's reflection formula, the Gamma functions give a contribution of
\begin{equation*}
\begin{split}
\frac{(-1)^{\frac{k}{2}-\lfloor\frac{\rk}{4}\rfloor-1}\left(k-\frac{\rk}{2}-\frac{1}{2}\right)}{2^{\frac{5}{2}-3k+\frac{3\rk}{2}}}.
\end{split}
\end{equation*}
It follows that
\begin{equation*}
B=\frac{2^{2k-\rk-1}\left(k-\lceil\frac{\rk}{2}\rceil\right)(D\tilde{D}_0)^{\frac{1}{2}}L\left(1-k+\lceil\frac{\rk}{2}\rceil,\chi_{\mathfrak{f}}\right)}{(-1)^{\frac{k}{2}+\lfloor\frac{\rk}{4}\rfloor+1}\mathfrak{d}|\mathfrak{f}|^{k-\lceil\frac{\rk}{2}\rceil}}
\end{equation*}
and hence we obtain the desired formula for $G_{0}(D,x)$ when $\rk$ is odd, completing the proof.

\end{proof}

\begin{remark}
The local Euler factors $\tilde{L}_p$ can be computed at the set of ``bad primes'' $p\mid2\tilde{D}\det(\uL)$ by using the methods for calculating the Igusa local zeta function from \cite{CKW}. For example, let $\rk$ be even and set 
\begin{equation*}
\mathfrak{D}:=\prod_{\substack{p\mid\tilde{D}\\\gcd(p,2\det(\uL))=1}}p^{\ord_p(\tilde{D})}.
\end{equation*}
Then Theorem $2.1$ in \cite{CKW} can be used to show that
\begin{equation*}
\begin{split}
\prod_{\substack{p\mid \tilde{D}\\p\nmid 2\det(\uL)}}\frac{\tilde{L}_p(k-1)}{1-\chi_{\mathfrak{f}_1}(p)p^{-\left(k-\frac{\rk}{2}\right)}}&=\chi_{\mathfrak{f}_1}(\mathfrak{D})\mathfrak{D}^{-\left(k-\frac{\rk}{2}-1\right)}\sigma_{k-\frac{\rk}{2}-1}^{\chi_{\mathfrak{f}_1}}(\mathfrak{D}).
\end{split}
\end{equation*} 
The proof involves tedious calculations, hence we omit it.

\end{remark}

An important consequence of Theorem \ref{T:E0} is the following rationality result:

\begin{corollary}
The Fourier coefficients of $E_{0}$ are rational numbers.
\end{corollary}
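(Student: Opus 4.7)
The plan is to read off rationality directly from the explicit formula given in Theorem \ref{T:E0}. The singular part of $E_{0}$ is $\vartheta_{\uL,0}$, whose Fourier coefficients are either $0$ or $1$, so only the non--singular coefficients $G_{0}(D,x)$ require attention. For each pair $(D,x)\in\text{supp}(\uL)$ with $D<0$, I would verify that every factor appearing in the closed--form expression for $G_{0}(D,x)$ is rational, treating the even and odd rank cases separately but using the same ingredients.

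First I would check the algebraic prefactors. In the even rank formula the exponent $k-\frac{\rk}{2}-1$ is an integer, so $(-D|\mathfrak{f}_1|)^{k-\frac{\rk}{2}-1}$ is rational; the Möbius values $\mu(d)$, the character values $\chi_{\mathfrak{f}_1}(d)$, $\chi_{\uL}(p)$, and the divisor sums $\sigma_{1-2k+\rk}(\mathfrak{d}_1/d)$ are all integers. In the odd rank formula one meets the additional factor $(D\tilde{D}_0)^{1/2}$, which looks irrational; here I would use the relation $D=D_0f^2$ and $\tilde{D}_0=D_0N_x^2$, which gives $D\tilde{D}_0=D_0^2f^2N_x^2$ and hence $(D\tilde{D}_0)^{1/2}=|D_0|fN_x\in\mathbb{Q}$. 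The Bernoulli number $B_{2k-\rk-1}$ is rational, and the exponent $k-\lceil\frac{\rk}{2}\rceil-1$ is again an integer.

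Next I would treat the two non--elementary ingredients. The local Euler factors $\tilde{L}_p(k-1)$ are rational by inspection of their definition in \eqref{eq:tildeL}: they are finite $\mathbb{Q}$--linear combinations of the integer representation numbers $R_{p^l}$ and rational powers $p^{-ls}$ evaluated at the integer $s=k-1$. For the Dirichlet $L$--values $L(1-k+\frac{\rk}{2},\chi_{\mathfrak{f}_1})$ (resp.\ $L(1-k+\lceil\frac{\rk}{2}\rceil,\chi_{\mathfrak{f}_2})$), I would invoke the classical theorem that $L(1-n,\chi)=-B_{n,\chi}/n$ for $n\geq 1$, where $B_{n,\chi}$ is the generalized Bernoulli number; since $k>\frac{\rk}{2}+2$ the arguments are non--positive integers, so these values are rational. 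Multiplying out all the factors shows $G_{0}(D,x)\in\mathbb{Q}$.

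I expect no serious obstacle: the bulk of the work was already done in proving Theorem \ref{T:E0}, and the only mildly subtle point is the apparent half--integer exponent in the odd--rank case, which collapses to an integer thanks to the definitions of $D$ and $\tilde{D}_0$. The argument is therefore essentially a bookkeeping exercise combined with the classical rationality of $L$--values at non--positive integers.
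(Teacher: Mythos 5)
Your proposal is correct and follows essentially the same route as the paper: both arguments reduce to inspecting the explicit formulas of Theorem \ref{T:E0}, invoke the classical rationality of Dirichlet $L$-values at non-positive integers (you via generalized Bernoulli numbers, the paper via Bernoulli polynomials, which is the same fact), and resolve the only delicate factor $(D\tilde{D}_0)^{\frac{1}{2}}=|D_0|fN_x=-\frac{DN_x}{f}$ using the definitions of $D_0$ and $\tilde{D}_0$.
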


\begin{proof}

The values of $L(s,\overline{\chi_\mathfrak{f}})$ at negative integers can be expressed as linear combinations of Bernoulli polynomials 
 with rational coefficients, cf. $\S1.7$ of \cite{Za2}:
\begin{equation*}
L\left(-n,\chi_{\mathfrak{f}}\right)=-\frac{|\mathfrak{f}|^{n}}{n+1}\sum_{j=1}^{|\mathfrak{f}|}\chi_{\mathfrak{f}}(j)B_{n+1}\left(\frac{j}{|\mathfrak{f}|}\right).
\end{equation*}
Furthermore, when $\rk$ is odd, the definition of $\tilde{D}_0$ gives $(D\tilde{D}_0)^{\frac{1}{2}}=(DD_0N_x^2)^{\frac{1}{2}}=-\frac{DN_x}{f}$. Since all other quantities appearing in the formulas are clearly rational, the result follows.

\end{proof}

\section{Non-trivial Eisenstein series}\label{S:Er}

Let $r$ be an element of $\iso\setminus\{0\}$. Our goal is to express $E_{r}$ in terms of $E_{0}$ using an approach based on vector-valued modular forms, which were introduced in Section \ref{Ss:VV}. Let $x$ be an element of $L^\#/L$ with order $N_x$ and let $\sigma_x$ denote the Schr\"odinger representation twisted at $x$, defined in \eqref{eq:schrodef}. Let $\phi$ be an element of $J_{k,\uL}$ and let $\varphi$ be the isomorphism between Jacobi forms and vector-valued modular forms given in Theorem \ref{T:jacobivv}.  Define the {\em averaging operator at $x$} in the following way:
\begin{equation}\label{eq:averagingop}
\Av_x\phi(\tau,z):=\frac{1}{N_x^2}\sum_{(\lambda,\mu)\in(\mathbb{Z}_{N_x^2})^2}\left(\varphi^{-1}\sigma_x^*(\lambda,\mu,0)\varphi\right)\phi(\tau,z).
\end{equation}
This operator was defined for vector-valued modular forms by Williams in $\S11$ of \cite{Wi}.

\begin{remark}
It was shown in $\S3.5$ of \cite{Bo} that the theta series $\vartheta_{\uL,y}$ ($y\in L^\#/L$) are linearly independent. The map $\vartheta_{\uL,y}\mapsto y$ defines a canonical isomorphism between $\text{span}_{\mathbb{C}}\{\vartheta_{\uL,y}:y\in L^\#/L\}$ and $\CL$. Thus, the action of the Schr\"odinger representation can be defined directly on theta series:
\begin{equation*}
\sigma_x(\lambda,\mu,t)\vartheta_{\uL,y}=e(\mu\beta(x,y)+(t-\lambda\mu)\beta(x))\vartheta_{\uL,y-\lambda x}
\end{equation*}
and hence the operator $\Av_x$ can be defined without the use of $\varphi$. However, we continue to work with vector-valued modular forms, since it is easier to prove modularity in this context.
\end{remark}
 
The following holds: 

 \begin{lemma}
 The operator $\Av_x$ is well defined, in other words it does not depend on the choice of representatives of $\mathbb{Z}_{N_x^2}$. Furthermore, it maps $J_{k,\uL}$ to $J_{k,\uL}$.
 \end{lemma}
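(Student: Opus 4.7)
My plan is to transport both claims through the isomorphism $\varphi$ of Theorem \ref{T:jacobivv} and verify them on the vector-valued side, where the modular transformation law is packaged in a single formula.

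For well-definedness, I would check directly from \eqref{eq:schrodef} that the operator $\sigma_x(\lambda,\mu,0)$ on $\CL$ depends only on the residues of $\lambda$ and $\mu$ modulo $N_x^2$. Replacing $\lambda$ by $\lambda+N_x^2$: the basis index satisfies $\e_{y-(\lambda+N_x^2)x} = \e_{y-\lambda x}$ because $N_x x \in L$, and the exponential factor changes by $e(-N_x^2\mu\beta(x))=1$ because $N_x^2\beta(x) \in \mathbb{Z}$ (using that $\uL$ is even together with $N_x x \in L$). The shift $\mu\mapsto\mu+N_x^2$ is analogous, using $N_x\beta(x,y)\in\mathbb{Z}$ for every $y\in L^\#$. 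The dual representation $\sigma_x^*$ inherits the same invariance by complex conjugation, and hence the sum in \eqref{eq:averagingop} is independent of the choice of representatives of $\mathbb{Z}_{N_x^2}$.

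For the second claim, set $F:=\varphi(\phi)\in M_{k-\frac{\rk}{2}}(\rho_{\uL}^*)$. It is enough to prove that the averaged sum $\tilde{F}:=\frac{1}{N_x^2}\sum_{(\lambda,\mu)}\sigma_x^*(\lambda,\mu,0)F$ again lies in $M_{k-\frac{\rk}{2}}(\rho_{\uL}^*)$, since then $\varphi^{-1}\tilde{F}=\Av_x\phi$ is automatically an element of $J_{k,\uL}$. For any $\tilde{A}\in\Mp_2(\mathbb{Z})$, the Petersson slash operator commutes with each $\sigma_x^*(\lambda,\mu,0)$ because the latter acts on the target $\CL$ pointwise, and relation \eqref{eq:weilschroconj} rearranges as $\sigma_x^*(\lambda,\mu,0)\rho_{\uL}^*(\tilde{A})=\rho_{\uL}^*(\tilde{A})\sigma_x^*((\lambda,\mu,0)^A)$. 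Since $A\in\SL_2(\mathbb{Z})$ has determinant one, the integral linear map $(\lambda,\mu)\mapsto(\lambda a+\mu c,\lambda b+\mu d)$ descends to a bijection of $(\mathbb{Z}_{N_x^2})^2$, so re-indexing recovers the original sum and yields $\tilde{F}|_{k-\frac{\rk}{2}}\tilde{A}=\rho_{\uL}^*(\tilde{A})\tilde{F}$. Holomorphy and the correct growth at the cusp are inherited from $F$ componentwise, since $\Av_x$ is a finite $\mathbb{C}$-linear combination of operators acting on the finite-dimensional space $\CL$.

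The only delicate point is the bookkeeping between the two $\SL_2(\mathbb{Z})$-actions in play — the action on $H$ via $(\lambda,\mu,t)\mapsto(\lambda,\mu,t)^A$ and the action on vector-valued modular forms through $\rho_{\uL}^*$. All the necessary compatibility is already encoded in \eqref{eq:weilschroconj}, so the proof reduces to assembling the pieces rather than producing new input.
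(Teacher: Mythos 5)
Your proposal is correct and follows essentially the same route as the paper: well-definedness is checked on the Schr\"odinger operators (the paper merely asserts the periodicity in $\lambda$ and $\mu$ modulo $N_x^2$, while you supply the verification via $N_x x\in L$ and $N_x^2\beta(x)\in\mathbb{Z}$), and modularity is obtained by conjugating with $\rho_{\uL}^*(\tilde{A})$ through relation \eqref{eq:weilschroconj} and re-indexing the sum over $(\mathbb{Z}_{N_x^2})^2$ by the bijection $(\lambda,\mu)\mapsto(\lambda,\mu)A$. No gaps.
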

 
 \begin{proof}
For all integers $a$ and $b$, we have
\begin{equation*}
\sigma_x^*(\lambda+aN_x^2,\mu,0)\e_{y}=\sigma_x^*(\lambda,\mu+bN_x^2,0)\e_{y}=\sigma_x^*(\lambda,\mu,0)\e_{y}.
\end{equation*}
Thus, the operator $\Av_x$ is well defined. To show that $\Av_x\phi$ is a Jacobi form of weight $k$ and index $\uL$, it is sufficient to prove that $\varphi \Av_x\phi$ is an element of $M_{k-\frac{\rk}{2}}(\rho_{\uL}^*)$, in view of Theorem \ref{T:jacobivv}. Fix a pair $(\lambda,\mu)$ in $(\mathbb{Z}_{N_x^2})^2$ and for simplicity let $F(\tau):=\varphi(\phi)\in M_{k-\frac{\rk}{2}}(\rho_{\uL}^*)$ and $\psi(\tau):=\sigma_x^*(\lambda,\mu,0)F(\tau)$. Applying the $|_{k-\frac{\rk}{2}}$ action of an element $\tilde{A}=\left(A,w(\tau)\right)$ in $\Mp_2(\mathbb{Z})$ on $\psi(\tau)$ gives the following:
\begin{equation*}
\begin{split}
\psi|_{k-\frac{\rk}{2}}\tilde{A}(\tau)&=w(\tau)^{-2\left(k-\frac{\rk}{2}\right)}\psi(A\tau)=w(\tau)^{-2\left(k-\frac{\rk}{2}\right)}\sigma_x^*(\lambda,\mu,0)F(A\tau)\\
&=w(\tau)^{-2\left(k-\frac{\rk}{2}\right)}\rho_{\uL}^*(\tilde{A})\sigma_x^*((\lambda,\mu,0)^A)\rho_{\uL}^*(\tilde{A})^{-1}F(A\tau)\\
&=\rho_{\uL}^*(\tilde{A})\sigma_x^*((\lambda,\mu,0)^A)F(\tau),
\end{split}
\end{equation*}
where we have used \eqref{eq:weilschroconj} in the middle line. Thus,
\begin{align*}
(\varphi\Av_x\phi)|_{k-\frac{\rk}{2}}\tilde{A}(\tau)&=\frac{1}{N_x^2}\sum_{(\lambda,\mu)\in(\mathbb{Z}_{N_x^2})^2}\rho_{\uL}^*(\tilde{A})\sigma_x^*((\lambda,\mu,0)^A)F(\tau)\\
&=\frac{1}{N_x^2}\sum_{(\lambda',\mu')\in(\mathbb{Z}_{N_x^2})^2}\rho_{\uL}^*(\tilde{A})\sigma_x^*(\lambda',\mu',0)F(\tau)\\
\nonumber&=\rho_{\uL}^*(\tilde{A})(\varphi\Av_x\phi)(\tau),
\end{align*}
with the bijective change of variable $(\lambda',\mu')=((\lambda,\mu)A)$. Thus, the function $\varphi\Av_x\phi$ is an element of $M_{k-\frac{\rk}{2}}(\rho_{\uL}^*)$ and we can apply $\varphi^{-1}$ to obtain that $\Av_x\phi$ is an element of $J_{k,\uL}$.
\end{proof}

Applying the averaging operator at $x$ to the Eisenstein series $E_{0}$ yields the following:

\begin{proposition}\label{P:ave0}
Let $x$ be an element of $L^\#/L$ and let the averaging operator $\Av_x$ be defined as above. Then:
\begin{equation}\label{eq:ave0}
\Av_x E_{0}(\tau,z)=\sum_{\substack{\lambda\in\mathbb{Z}_{N_x^2}\\ \lambda\beta(x)\in\mathbb{Z}}}E_{\lambda x}(\tau,z).
\end{equation}
\end{proposition}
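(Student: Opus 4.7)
The plan is to transfer the identity to the vector-valued side via the isomorphism $\varphi$ of Theorem~\ref{T:jacobivv}, where both $\Av_{x}$ and the Eisenstein series have clean presentations in terms of $\rho_{\uL}^{*}$. Set $\mathcal{E}_{r} := \varphi(E_{r}) \in M_{k-\frac{\rk}{2}}(\rho_{\uL}^{*})$. Combining \eqref{eq:Eistheta} with the theta-transformation formula \eqref{eq:thetamod} and the unitarity of $\rho_{\uL}$ (which yields $\rho_{\uL}(\tilde{A})_{r,y} = \overline{\rho_{\uL}(\tilde{A}^{-1})_{y,r}}$, so that $\sum_{y} \rho_{\uL}(\tilde{A})_{r,y}\e_{y} = \rho_{\uL}^{*}(\tilde{A})^{-1}\e_{r}$) produces the vector-valued Eisenstein presentation
\begin{equation*}
\mathcal{E}_{r}(\tau) = \frac{1}{2}\sum_{A \in \Gamma_{\infty}\setminus\Gamma}\rho_{\uL}^{*}(\tilde{A})^{-1}\e_{r}\,w(\tau)^{-2(k-\frac{\rk}{2})}.
\end{equation*}
Equation~\eqref{eq:ave0} is therefore equivalent to $\Av_{x}\mathcal{E}_{0} = \sum_{\lambda}\mathcal{E}_{\lambda x}$, where $\lambda$ runs over $\mathbb{Z}_{N_{x}^{2}}$ subject to $\lambda\beta(x)\in\mathbb{Z}$.

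Apply $\Av_{x} = \frac{1}{N_{x}^{2}}\sum_{\lambda,\mu}\sigma_{x}^{*}(\lambda,\mu,0)$ to the above presentation; the operator $\sigma_{x}^{*}(\lambda,\mu,0)$ is constant in $\tau$ and commutes with the scalar factor $w(\tau)^{-2(k-\frac{\rk}{2})}$. Invoking \eqref{eq:weilschroconj} with $\tilde{A}$ replaced by $\tilde{A}^{-1}$ gives
\begin{equation*}
\sigma_{x}^{*}(\lambda,\mu,0)\rho_{\uL}^{*}(\tilde{A})^{-1} = \rho_{\uL}^{*}(\tilde{A})^{-1}\sigma_{x}^{*}\bigl((\lambda,\mu,0)^{A^{-1}}\bigr);
\end{equation*}
since $A^{-1}\in\SL_{2}(\mathbb{Z})$ has determinant one, the substitution $(\lambda,\mu)\mapsto(\lambda,\mu)A^{-1}$ is a bijection of $(\mathbb{Z}_{N_{x}^{2}})^{2}$, and after carrying it out the inner $(\lambda,\mu)$-sum decouples from $A$, yielding
\begin{equation*}
\Av_{x}\mathcal{E}_{0}(\tau) = \frac{1}{2}\sum_{A}\rho_{\uL}^{*}(\tilde{A})^{-1}\biggl(\frac{1}{N_{x}^{2}}\sum_{\lambda,\mu}\sigma_{x}^{*}(\lambda,\mu,0)\e_{0}\biggr)w(\tau)^{-2(k-\frac{\rk}{2})}.
\end{equation*}

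The inner average is elementary: $\sigma_{x}^{*}(\lambda,\mu,0)\e_{0} = e(\lambda\mu\beta(x))\e_{-\lambda x}$, so summing over $\mu \in \mathbb{Z}_{N_{x}^{2}}$ (noting $\beta(x) \in \frac{1}{N_{x}^{2}}\mathbb{Z}$ because $N_{x}x \in L$ and $\uL$ is even) selects the condition $\lambda\beta(x) \in \mathbb{Z}$ and contributes a factor $N_{x}^{2}$. After the bijective relabeling $\lambda \mapsto -\lambda$, which preserves this condition, one obtains
\begin{equation*}
\frac{1}{N_{x}^{2}}\sum_{\lambda,\mu}\sigma_{x}^{*}(\lambda,\mu,0)\e_{0} = \sum_{\substack{\lambda \in \mathbb{Z}_{N_{x}^{2}}\\ \lambda\beta(x)\in\mathbb{Z}}}\e_{\lambda x}.
\end{equation*}
Linearity in $\e_{\lambda x}$ and the presentation of $\mathcal{E}_{r}$ recorded above then give $\Av_{x}\mathcal{E}_{0} = \sum_{\lambda:\lambda\beta(x)\in\mathbb{Z}}\mathcal{E}_{\lambda x}$, and applying $\varphi^{-1}$ concludes the proof. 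The main bookkeeping hurdle is the initial re-expression of $\mathcal{E}_{r}$ as $\rho_{\uL}^{*}(\tilde{A})^{-1}\e_{r}$ (converting a row of $\rho_{\uL}(\tilde{A})$ into a column of $\rho_{\uL}^{*}(\tilde{A})^{-1}$ via unitarity), so that the intertwining relation \eqref{eq:weilschroconj} applies cleanly; everything downstream is orthogonality on $\mathbb{Z}_{N_{x}^{2}}$ and a standard change of variables.
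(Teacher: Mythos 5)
Your proof is correct and follows essentially the same route as the paper: the re-expression of $E_r$ via \eqref{eq:Eistheta}, \eqref{eq:thetamod} and unitarity as $\frac12\sum_A 1|_{k-\frac{\rk}{2}}\tilde{A}\,\rho_{\uL}^*(\tilde{A})^{-1}\e_r$, the intertwining relation \eqref{eq:weilschroconj} with the bijective substitution $(\lambda,\mu)\mapsto(\lambda,\mu)A^{-1}$, orthogonality in $\mu$ over $\mathbb{Z}_{N_x^2}$, and the final relabeling $\lambda\mapsto-\lambda$ all match the paper's argument. The only cosmetic difference is that you work entirely on the vector-valued side and apply $\varphi^{-1}$ once at the end, rather than carrying it through the computation.
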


\begin{proof}
First, note that, if $\lambda\beta(x)\in\mathbb{Z}$, then $\beta(\lambda x)=\lambda^2\beta(x)\in\mathbb{Z}$ and so $\lambda x\in \iso$. Thus, the right-hand side of \eqref{eq:ave0} is well defined. Expand $\eqref{eq:Eistheta}$ and use \eqref{eq:thetamod}:
\begin{align}\nonumber
E_{r}(\tau,z)&=\frac{1}{2}\sum_{A\in\Gamma_{\infty}\setminus\Gamma}(c \tau+d)^{-k}e\left(\frac{-cz}{c\tau+d}\right)\vartheta_{\uL,r}(A(\tau,z))\\
\nonumber&=\frac{1}{2}\sum_{A\in\Gamma_{\infty}\setminus\Gamma}1|_{k-\frac{\rk}{2}}\tilde{A}(\tau)\vartheta_{\uL,r}|_{\frac{\rk}{2},\uL}\tilde{A}(\tau,z)\\
\label{eq:eisthetah}&=\frac{1}{2}\sum_{A\in\Gamma_{\infty}\setminus\Gamma}1|_{k-\frac{\rk}{2}}\tilde{A}(\tau)\sum_{y\in L^\#/L}\rho_{\uL}(\tilde{A})_{r,y}\vartheta_{\uL,y}(\tau,z)\\
\nonumber&=\frac{1}{2}\sum_{A\in\Gamma_{\infty}\setminus\Gamma}1|_{k-\frac{\rk}{2}}\tilde{A}(\tau)\rho_{\uL}(\tilde{A})^t\vartheta_{\uL,r}(\tau,z).
\end{align}
 The fact that $\rho_{\uL}$ is unitary implies that $\rho_{\uL}^*(\tilde{A})^{-1}=\rho_{\uL}(\tilde{A})^t$ and hence
\begin{equation*}
\begin{split}
\Av_xE_{0}(\tau,z)&=\frac{1}{N_x^2}\sum_{(\lambda,\mu)\in(\mathbb{Z}_{N_x^2})^2}\varphi^{-1}\sigma_x^*(\lambda,\mu,0)\frac{1}{2}\sum_{A\in\Gamma_{\infty}\setminus\Gamma}1|_{k-\frac{\rk}{2}}\tilde{A}(\tau)\rho_{\uL}^*(\tilde{A})^{-1}\e_{0}.
\end{split}
\end{equation*}
Using \eqref{eq:weilschroconj}, we have:
\begin{equation*}
\begin{split}
\sum_{(\lambda,\mu)\in(\mathbb{Z}_{N_x^2})^2}\sigma_x^*(\lambda,\mu,0)\rho_{\uL}^*(\tilde{A})^{-1}\e_{0}&=\sum_{(\lambda,\mu)\in(\mathbb{Z}_{N_x^2})^2}\rho_{\uL}^*(\tilde{A})^{-1}\sigma_x^*\left((\lambda,\mu,0)^{A^{-1}}\right)\e_{0}\\
&=\sum_{(\lambda',\mu')\in(\mathbb{Z}_{N_x^2})^2}\rho_{\uL}^*(\tilde{A})^{-1}\sigma_x^*\left(\lambda',\mu',0\right)\e_{0}\\
&=\sum_{(\lambda,\mu)\in(\mathbb{Z}_{N_x^2})^2}e(\lambda\mu\beta(x))\rho_{\uL}^*(\tilde{A})^{-1}\e_{-\lambda x}\\
&=\sum_{\lambda\in\mathbb{Z}_{N_x^2}}\rho_{\uL}^*(\tilde{A})^{-1}\e_{-\lambda x}\sum_{\mu\in\mathbb{Z}_{N_x^2}}e(\mu\lambda\beta(x)).
\end{split}
\end{equation*}
In the second line, we made the change of variable $(\lambda',\mu')=\left((\lambda,\mu)A^{-1}\right)$. The inner sum in the last line is equal to $N_x^2$ if $\lambda\beta(x)\in\mathbb{Z}$ and to zero if $\lambda\beta(x)$ is not an integer. Thus,
\begin{equation*}
\begin{split}
\Av_xE_{0}(\tau,z)&=\sum_{\substack{\lambda\in\mathbb{Z}_{N_x^2}\\ \lambda\beta(x)\in\mathbb{Z}}}\varphi^{-1}\frac{1}{2}\sum_{A\in\Gamma_{\infty}\setminus\Gamma}1|_{k-\frac{\rk}{2}}\tilde{A}(\tau)\rho_{\uL}(\tilde{A})^{t}\e_{-\lambda x}\\
&=\sum_{\substack{\lambda\in\mathbb{Z}_{N_x^2}\\ \lambda\beta(x)\in\mathbb{Z}}}E_{-\lambda x}(\tau,z)=\sum_{\substack{\lambda\in\mathbb{Z}_{N_x^2}\\ \lambda\beta(x)\in\mathbb{Z}}}E_{\lambda x}(\tau,z),
\end{split}
\end{equation*}
as claimed.
\end{proof}

We remind the reader of \eqref{eq:Er-r}, which asserts that $E_{x}=-E_{-x}$ when $k$ is odd. On the right hand-side of \eqref{eq:ave0}, $\lambda\in\mathbb{Z}_{N_x^2}$ satisfies $\lambda\beta(x)\in\mathbb{Z}$ if and only if $-\lambda\beta(x)\in\mathbb{Z}$. Furthermore, the trivial Eisenstein series $E_{0}\equiv0$ when $k$ is odd and so, in this case, the terms on the right-hand side of \eqref{eq:ave0} cancel out. Thus, from now on, let $k$ be even. 

Note that \eqref{eq:eisthetah} can be rewritten as
\begin{equation*}
\begin{split}
E_{r}(\tau,z)&=\sum_{y\in L^\#/L}h_{r,y}(\tau)\vartheta_{\uL,y}(\tau,z),
\end{split}
\end{equation*}
where $h_{r,y}$ denotea the $y$-th component of the theta expansion of the Eisenstein series $E_{r}$,
\begin{equation}\label{eq:hry}
h_{r,y}(\tau):=h_{E_r,y}(\tau)=\frac{1}{2}\sum_{A\in\Gamma_{\infty}\setminus\Gamma}\rho_{\uL}(\tilde{A})_{r,y}1|_{k-\frac{\rk}{2}}\tilde{A}(\tau).
\end{equation}
Taking $x$ to be an isotropic element in Proposition \ref{P:ave0} gives the main result of this section:

\begin{proposition}\label{P:avxiso}
Suppose that $k$ is even and let $x$ be an element of $\iso$. Then:
\begin{equation}\label{eq:avxiso}
\sum_{\lambda\in\mathbb{Z}_{N_x}}E_{\lambda x}(\tau,z)=\sum_{\substack{y\in L^\#/L\\\beta(x,y)\in\mathbb{Z}}}\left(\sum_{\lambda\in\mathbb{Z}_{N_x}}h_{0,y+\lambda x}(\tau)\right)\vartheta_{\uL,y}(\tau,z).
\end{equation}
\end{proposition}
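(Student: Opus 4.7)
The strategy is to compute $\Av_x E_0$ in two complementary ways and equate the results. On the one hand, applying Proposition \ref{P:ave0} with $x \in \iso$ gives
\begin{equation*}
\Av_x E_0 = \sum_{\substack{\lambda \in \mathbb{Z}_{N_x^2} \\ \lambda\beta(x) \in \mathbb{Z}}} E_{\lambda x},
\end{equation*}
in which the congruence $\lambda\beta(x) \in \mathbb{Z}$ holds automatically because $\beta(x) \in \mathbb{Z}$. Since $E_{\lambda x}$ depends only on $\lambda x \bmod L$, which is $N_x$-periodic in $\lambda$, the outer sum collapses into $N_x$ copies of $\sum_{\lambda \in \mathbb{Z}_{N_x}} E_{\lambda x}$.

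On the other hand, I would compute $\Av_x E_0$ directly from \eqref{eq:averagingop} using the theta expansion $\varphi(E_0) = \sum_y h_{0,y} \e_y$. The key simplification is that for $x \in \iso$, the Schr\"odinger action \eqref{eq:schrodef} reduces to
\begin{equation*}
\sigma_x^*(\lambda,\mu,0)\e_y = e(-\mu\beta(x,y))\e_{y - \lambda x},
\end{equation*}
since $e(\lambda\mu\beta(x)) = 1$. After applying $\varphi^{-1}$ and reindexing by $y' := y - \lambda x$ (using $\beta(x,\lambda x) = 2\lambda\beta(x) \in \mathbb{Z}$ to absorb the cross term), one obtains
\begin{equation*}
\varphi^{-1}\sigma_x^*(\lambda,\mu,0)\varphi(E_0) = \sum_{y' \in L^\#/L} h_{0,y' + \lambda x}\, e(-\mu\beta(x,y'))\, \vartheta_{\uL,y'}.
\end{equation*}
Because $N_x x \in L$ and $y' \in L^\#$, the pairing satisfies $N_x \beta(x,y') \in \mathbb{Z}$, and the standard geometric sum yields $\sum_{\mu \in \mathbb{Z}_{N_x^2}} e(-\mu\beta(x,y')) = N_x^2$ when $\beta(x,y') \in \mathbb{Z}$ and zero otherwise. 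Summing $\lambda$ over $\mathbb{Z}_{N_x^2}$ then produces a further factor of $N_x$ since the coefficient $h_{0,y'+\lambda x}$ depends on $\lambda$ only modulo $N_x$, giving
\begin{equation*}
\Av_x E_0 = N_x \sum_{\substack{y' \in L^\#/L\\ \beta(x,y')\in\mathbb{Z}}} \left(\sum_{\lambda \in \mathbb{Z}_{N_x}} h_{0,y' + \lambda x}\right)\vartheta_{\uL,y'}.
\end{equation*}

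Equating the two expressions for $\Av_x E_0$ and cancelling the common factor of $N_x$ yields \eqref{eq:avxiso} after relabelling $y'$ as $y$. The main obstacle is the careful bookkeeping of the two factors of $N_x$ arising from independent periodicities (one in $\lambda$ through $\lambda x \bmod L$, the other in $\mu$ through the orthogonality sum on the fractional lattice $\tfrac{1}{N_x}\mathbb{Z}/\mathbb{Z}$); the isotropy hypothesis $x \in \iso$ is essential both for making the condition $\lambda\beta(x) \in \mathbb{Z}$ automatic in Proposition \ref{P:ave0} and for trivialising the central phase $e(-\lambda\mu\beta(x))$ in the Schr\"odinger action.
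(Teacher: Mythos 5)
Your proposal is correct and follows essentially the same route as the paper: both sides are obtained by evaluating $\Av_x E_0$, once via Proposition \ref{P:ave0} (where isotropy of $x$ makes the condition $\lambda\beta(x)\in\mathbb{Z}$ automatic and collapses $\mathbb{Z}_{N_x^2}$ to $N_x$ copies of $\mathbb{Z}_{N_x}$), and once by expanding the Schr\"odinger action on the theta components, killing the central phase by isotropy, applying orthogonality in $\mu$, and using $N_x$-periodicity in $\lambda$. The only cosmetic difference is that you reindex $y\mapsto y-\lambda x$ before performing the $\mu$-sum while the paper does it afterwards; the bookkeeping of the two factors of $N_x$ matches the paper's exactly.
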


\begin{proof}
Insert the definition of $\Av_x$ on the left-hand side of \eqref{eq:ave0} and expand:
\begin{equation*}
\begin{split}
\Av_xE_{0}(\tau,z)&=\frac{1}{N_x^2}\varphi^{-1}\sum_{(\lambda,\mu)\in(\mathbb{Z}_{N_x^2})^2}\sum_{y\in L^\#/L}e(\lambda\mu\beta(x)-\mu\beta(x,y))h_{0,y}(\tau)\e_{y-\lambda x}\\
&=\frac{1}{N_x^2}\varphi^{-1}\sum_{y\in L^\#/L}h_{0,y}(\tau)\sum_{\lambda\in\mathbb{Z}_{N_x^2}}\e_{y-\lambda x}\sum_{\mu\in\mathbb{Z}_{N_x^2}}e(-\mu\beta(x,y))\\
&=\varphi^{-1}\sum_{\substack{y\in L^\#/L\\\beta(x,y)\in\mathbb{Z}}}h_{0,y}(\tau)\sum_{\lambda\in\mathbb{Z}_{N_x^2}}\e_{y-\lambda x}\\
&=N_x\varphi^{-1}\sum_{\substack{y\in L^\#/L\\\beta(x,y)\in\mathbb{Z}}}h_{0,y}(\tau)\sum_{\lambda\in\mathbb{Z}_{N_x}}\e_{y-\lambda x},
\end{split}
\end{equation*}
since $\e_{y-\lambda x}$ only depends on $\lambda\bmod N_x$. Setting $y'=y-\lambda x$ and dropping the prime, the last line can be written as
\begin{equation*}
\begin{split}
\Av_xE_{0}(\tau,z)&=N_x\varphi^{-1}\sum_{\substack{y\in L^\#/L\\\beta(x,y)\in\mathbb{Z}}}\left(\sum_{\lambda\in\mathbb{Z}_{N_x}}h_{0,y+\lambda x}(\tau)\right)\e_{y}\\
&=N_x\sum_{\substack{y\in L^\#/L\\\beta(x,y)\in\mathbb{Z}}}\left(\sum_{\lambda\in\mathbb{Z}_{N_x}}h_{0,y+\lambda x}(\tau)\right)\vartheta_{\uL,y}(\tau,z).
\end{split}
\end{equation*}
On the other hand, if $x\in\iso$, then $\lambda\beta(x)\in\mathbb{Z}$ for every $\lambda$ in $\mathbb{Z}_{N_x^2}$ and so the right-hand side of \eqref{eq:ave0} is equal to
\begin{equation*}
\begin{split}
\sum_{\lambda\in\mathbb{Z}_{N_x^2}}E_{\lambda x}(\tau,z)=N_x\sum_{\lambda\in\mathbb{Z}_{N_x}}E_{\lambda x}(\tau,z),
\end{split}
\end{equation*}
since $E_{r}$ only depends on $r\bmod L$. Hence, the proof is complete.
\end{proof}

We list a few examples in which Proposition \ref{P:avxiso} can be used to compute Eisenstein series indexed by elements $x$ in $\iso$ of small order:

\begin{example}
Suppose that $x$ in $\iso$ has order $2$ in $L^\#/L$. Applying Proposition \ref{P:avxiso} to an isotropic element $x$ of order $2$ yields
\begin{equation*}
E_{0}(\tau,z)+E_{x}(\tau,z)=\sum_{\substack{y\in L^{\#}/L\\ \beta(x,y)\in\mathbb{Z}}}\left(h_{0,y}+h_{0,y+x}\right)(\tau)\vartheta_{\uL,y}(\tau,z)
\end{equation*}
and hence the Fourier coefficients of $E_x$ are given by
\begin{equation*}
G_{x}(D,y)=
\begin{cases}
G_{0}(D,y+x), & \text{if }\beta(x,y)\in\mathbb{Z}\text{ and }\\
-G_{0}(D,y), & \text{otherwise.}
\end{cases}
\end{equation*}
\end{example}

\begin{example}
Suppose that $x$ in $\iso$ has order $3$. Using Proposition \ref{P:avxiso} and the fact that $E_{x}=E_{-x}$ when $k$ is even, we obtain:
\begin{equation*}
\begin{split}
E_{0}+E_{x}+E_{2x}&=E_{0}+2E_{x}=\sum_{\substack{y\in L^\#/L\\\beta(x,y)\in\mathbb{Z}}}\left(h_{0,y}+h_{0,y+x}+h_{0,y+2x}\right)\vartheta_{\uL,y}.
\end{split}
\end{equation*}
Therefore,
\begin{equation*}
G_{x}(D,y)=
\begin{cases}
\frac{1}{2}\left(G_{0}(D,y+x)+G_{0}(D,y+2x)\right), & \text{if }\beta(x,y)\in\mathbb{Z}\text{ and }\\
-\frac{1}{2}G_{0}(D,y), & \text{otherwise.}
\end{cases}
\end{equation*}
\end{example}

\begin{example}
If $x$ in $\iso$ has order $4$, then by the same argument as above we have:
\begin{equation*}
\begin{split}
E_{0}+2E_{x}+E_{2x}&=\sum_{\substack{y\in L^\#/L\\\beta(x,y)\in\mathbb{Z}}}\left(h_{0,y}+h_{0,y+x}+h_{0,y+2x}+h_{0,y+3x}\right)\vartheta_{\uL,y}
\end{split}
\end{equation*}
and since $2x$ has order $2$, we have:
\begin{equation*}
G_{2x}(D,y)=
\begin{cases}
G_{0}(D,y+2x), & \text{if }\beta(2x,y)\in\mathbb{Z}\text{ and }\\
-G_{0}(D,y), & \text{otherwise.}
\end{cases}
\end{equation*}
Note that $\beta(x,y)\in\mathbb{Z}$ implies $\beta(2x,y)\in\mathbb{Z}$ and hence
\begin{equation*}
G_{x}(D,y)=
\begin{cases}
\frac{1}{2}\left(G_{0}(D,y+x)+G_{0}(D,y+3x)\right), & \text{if }\beta(x,y)\in\mathbb{Z},\\
-\frac{1}{2}\left(G_{0}(D,y)+G_{0}(D,y+2x)\right), & \text{if }\beta(2x,y)\in\mathbb{Z}\text{ and }\beta(x,y)\not\in\mathbb{Z}\text{ and }\\
0, & \text{otherwise.}
\end{cases}
\end{equation*}
\end{example}

\begin{example}
When $x$ in $\iso$ has order $6$, we can use Proposition \ref{P:avxiso} to obtain a formula for
\begin{equation*}
E_{0}+E_{x}+E_{2x}+E_{3x}+E_{4x}+E_{5x}.
\end{equation*}
Since $3x$ has order $2$, we have:
\begin{equation*}
G_{3x}(D,y)=
\begin{cases}
G_{0}(D,y+3x), & \text{if }\beta(3x,y)\in\mathbb{Z}\text{ and }\\
-G_{0}(D,y), & \text{otherwise.}
\end{cases}
\end{equation*}
We can also use our previous calculations for an element of order $3$ to the Fourier coefficients of  $E_{2x}$ (which is equal to $E_{4x}$):
\begin{equation*}
G_{2x}(D,y)=
\begin{cases}
\frac{1}{2}\left(G_{0}(D,y+2x)+G_{0}(D,y+4x)\right), & \text{if }\beta(2x,y)\in\mathbb{Z}\text{ and }\\
-\frac{1}{2}G_{0}(D,y), & \text{otherwise.}
\end{cases}
\end{equation*}
We obtain a formula for the Fourier coefficients of $E_{x}$ (which is equal to $E_{5x}$). As before, if $\beta(x,y)\in\mathbb{Z}$, then $\beta(2x,y)\in\mathbb{Z}$ and $\beta(3x,y)\in\mathbb{Z}$. If $\beta(x,y)\not\in\mathbb{Z}$ and $\beta(2x,y)\in\mathbb{Z}$, then $\beta(x,y)=a/2$ for some odd $a$ and hence $\beta(3x,y)\not\in\mathbb{Z}$. Similarly, if $\beta(x,y)\not\in\mathbb{Z}$ and $\beta(3x,y)\in\mathbb{Z}$, then $\beta(2x,y)\not\in\mathbb{Z}$. Therefore,
\begin{equation*}
G_{x}(D,y)=
\begin{cases}
\frac{1}{2}\left(G_{0}(D,y+x)+G_{0}(D,y+5x)\right), & \text{if }\beta(x,y)\in\mathbb{Z},\\
-\frac{1}{2}\left(G_{0}(D,y+2x)+G_{0}(D,y+4x)\right), & \text{if }\beta(2x,y)\in\mathbb{Z}\text{ and }\beta(x,y)\not\in\mathbb{Z},\\
-\frac{1}{2}G_{0}(D,y+3x), & \text{if }\beta(3x,y)\in\mathbb{Z}\text{ and }\beta(x,y)\not\in\mathbb{Z}\text{ and }\\
-\frac{1}{2}G_{0}(D,y), & \text{otherwise.}
\end{cases}
\end{equation*}
\end{example}

Note that, when $x$ in $\iso$ has order $5$, we only obtain a formula for the Fourier coefficients of $E_{x}+E_{2x}$:
\begin{equation*}
G_{x}(D,y)+G_{2x}(D,y)=
\begin{dcases}
\substack{\frac{1}{2}(G_{0}(D,y+x)+G_{0}(D,y+2x)\\+G_{0}(D,y+3x)+G_{0}(D,y+4x)), }&  \text{if }\beta(x,y)\in\mathbb{Z}\text{ and }\\
-\frac{1}{2}G_{0}(D,y), & \text{otherwise}.
\end{dcases}
\end{equation*}
By the same reasoning, when $x$ in $\iso$ has odd prime order $p$, we can obtain a formula for $E_{x}+E_{2x}+E_{3x}+\cdots+E_{\frac{p-1}{2}x}$. When $x$ in $\iso$ has order $8$, we can use Proposition \ref{P:avxiso} to obtain a formula for 
\begin{equation*}
E_{0}+E_{x}+E_{2x}+E_{3x}+E_{4x}+E_{5x}+E_{6x}+E_{7x}
\end{equation*}
and we can find $E_{4x}$ and $E_{2x}=E_{6x}$ as before, but we can only obtain a formula for $E_{x}+E_{3x}$ afterwards.

\bibliographystyle{amsalpha}

\begin{thebibliography}{A}

\bibitem{A} {\sc Ajouz,} A., 2015. {\em Hecke Operators on Jacobi Forms of Lattice Index and the Relation to Elliptic Modular Forms.}
 Ph.D. thesis, University of Siegen.
 
\bibitem{BK} {\sc B\"ocherer,} S. and {\sc Kohnen,} W., 1993. Estimates for
Fourier Coefficients of Siegel Cusp Forms. {\em Mathematische Annalen,} 297(1), 499--517.

\bibitem{Bo} {\sc Boylan, }H., 2015. {\em Jacobi Forms, Finite Quadratic Modules and Weil Representations over Number Fields}.
Switzerland: Springer International Publishing.

\bibitem{Br2} {\sc Bringmann,} K., 2004. {\em Applications of Poincar\'e
series on Jacobi Groups.} Ph.D. thesis, Heidelberg University.

\bibitem{BrKu}{\sc Bruinier,} J. H. and {\sc Kuss,} M., 2001. Eisenstein series attached to lattices and modular forms of orthogonal groups. {\em Manuscripta Math.,} 106, 443--459.

\bibitem{CS} {\sc Cohen,} H. and {\sc Str\"omberg,} F., 2017. {\em Modular Forms: A Classical Approach.} Graduate Studies in Mathematics (179). USA: American Mathematical Society.

\bibitem{CKW}{\sc Cowan,} R. A., {\sc Katz,} D. J. and {\sc White,} L. M., 2017. A new generating function for calculating the Igusa local zeta function. {\em Adv. Math.}, 304, 355--420.
 
\bibitem{EZ} {\sc Eichler,} M. and {\sc Zagier,} D., 1985. {\em The Theory of Jacobi Forms.} Boston, MA: Birkh\"auser Boston Inc. 

\bibitem{G}{\sc Gritsenko,} V., 1999. Jacobi forms in $n$ variables. English translation: {\em J. Soviet Math.,} 53, 243--252.

\bibitem{G1}{\sc Gritsenko,} V., 2000. Elliptic genus of Calabi--Yau manifolds and Jacobi forms. {\em St. Petersburg Math. Journal,} 11, 781--804.

\bibitem{G2}------, 2008. Blow up of the Cohen--Kuznetzov operator and an automorphic problem of K. Saito. {\em Proceedings of RIMS Symposium ``Automorphic Representations, Automorphic Forms, L-functions, and Related Topics'' Kokyuroki,} 1617, 83--97.

\bibitem{G3}------, 2010. {\em Reflective modular forms in algebraic geometry} [online]. ARXIV. Available at: \url{https://arxiv.org/abs/1005.3753}

\bibitem{GN}{\sc Gritsenko,} V. A. and {\sc Nikulin,} V. V., 1996. K$3$ surfaces, Lorentzian Kac--Moody algebras and Mirror Symmetry. {\em Math. Research Letters}, 3, 211--229 ;

\bibitem{J}{\sc Jacobi,} C.G.J., 1829.  {\em Fundamenta nova theoriae functionum ellipticarum.} K{\H o}nigsberg.

\bibitem{KM} {\sc Krauel,} M. and {\sc Mason,} G., 2015. {\em Jacobi trace functions in the theory of vertex operator
algebras} [online]. ARXIV. Available at: \url{https://arxiv.org/abs/1309.5720}

\bibitem{PS}{\sc Pyatetskii-Shapiro,} I. I., 1969. {\em Automorphic Functions and the Geometry of Classical Domains.} New York, NY: Gordon \& Breach.
          
\bibitem{Si}{\sc Siegel,} C. L., 1935. \"Uber die analytische Theorie der quadratischen Formen. {\em Ann. of Math.,} 36, 527--606.

\bibitem{Wi}{\sc Williams,} B., 2017. {\em Poincar\'e square series for the Weil Representation} [online]. ARXIV. Available at: \url{https://arxiv.org/pdf/1704.06758v2.pdf} [Accessed 13th of June 2017].


\bibitem{Za}{\sc Zagier,} D., 1977. Modular forms whose Fourier coefficients involve zeta-functions of quadratic fields. {\em In:} Modular Functions of One Variable VI. Germany: Springer--Verlag, Berlin--Heidelberg--New York. pp. 105--169. 

\bibitem{Za2}------, 1981. {\em Zetafunktionen und quadratische K\"orper.} Germany: Springer-Verlag Berlin Heidelberg.

\bibitem{Z} {\sc Ziegler,} C., 1989. Jacobi Forms of Higher Degree. {\em Abh, Math. Sem. Univ. Hamburg,} 59(1), 191--224.
 
\end{thebibliography}

\end{document}